\renewenvironment{abstract}{\minisec{Abstract}}{\par\vspace{.1in}}
\newenvironment{keywords}{\minisec{Key Words}}{\par\vspace{.1in}}
\newenvironment{AMS}{\minisec{AMS subject classification}}{\par\vspace{.1in}}
\theoremstyle{plain}
\newtheorem{theorem}{Theorem}[section]
\newtheorem{lemma}[theorem]{Lemma}
\newtheorem{prop}[theorem]{Proposition}
\theoremstyle{remark}
\newtheorem{remark}[theorem]{Remark}
\theoremstyle{definition}
\newtheorem{assumption}{Assumption}
\numberwithin{equation}{section}
\newcommand{\R}{\mathds{R}}
\newcommand{\N}{\mathds{N}}
\newcommand{\Om}{\Omega}
\newcommand{\norm}[1]{\lVert#1\rVert}
\newcommand{\abs}[1]{\lvert#1\rvert}
\newcommand{\PP}{\mathcal{P}}
\newcommand{\lh}{\abs{\ln{h}}}
\newcommand{\Qad}{Q_{\text{ad}}}
\newcommand{\Qhad}{Q_{h,\text{ad}}}
\newcommand{\Th}{\mathcal{T}_h}
\DeclareMathOperator{\sgn}{sgn}
\renewcommand{\phi}{\varphi}
\begin{document}

\title{Finite Element Error Estimates for Optimal Control Problems with Pointwise Tracking}

\author{
  Niklas Behringer\footnotemark[2] \and
  Dominik Meidner\footnotemark[2] \and Boris Vexler\footnotemark[2]
}

\markright{Behringer, Meidner, Vexler: FEM Error Estimates for Problems with Pointwise Tracking}

\maketitle

\renewcommand{\thefootnote}{\fnsymbol{footnote}}

\footnotetext[2]{Technical University of Munich, Department of Mathematics, Chair of Optimal
Control, Garching / Germany (nbehring@ma.tum.de, meidner@ma.tum.de, vexler@ma.tum.de)}

\renewcommand{\thefootnote}{\arabic{footnote}}

\begin{abstract}
  We consider a linear-quadratic elliptic optimal control problem with point evaluations of the
  state variable in the cost functional. The state variable is discretized by conforming linear
  finite elements. For control discretization, three different approaches are considered. The main
  goal of the paper is to significantly improve known a priori discretization error estimates for
  this problem. We prove optimal error estimates for cellwise constant control discretizations in
  two and three space dimensions.  Further, in two space dimensions, optimal error estimates for
  variational discretization and for the post-processing approach are derived.
\end{abstract}

\begin{keywords}
  Elliptic equations, optimal control, pointwise tracking, finite elements, error estimates
\end{keywords}

\begin{AMS}
  49N10, 49M25, 65N15, 65N30
\end{AMS}

%%%%%%%%%%%%%%%%%%%%%%%%%%%%%%%%%%%%%%%%%%%%%%%%%%%%%%%%%%%%%%%%%%%%%%%%%%%%%%%

%%%%%%%%%%%%%%%%%%%%%%%%%%%%%%%%%%%%%%%%%%%%%%%%%%%%%%%%%%%%%%
\section{Introduction}
%%%%%%%%%%%%%%%%%%%%%%%%%%%%%%%%%%%%%%%%%%%%%%%%%%%%%%%%%%%%%%

In this article, we develop a priori error estimates for the discretization of a linear-quadratic
elliptic optimal control problem with point evaluation of the state variable in the cost functional.
That is, we consider the following problem:
\begin{subequations}\label{definition:contproblem1}
  \begin{equation}\label{eq:functional}
\text{Minimize }\;    \frac{1}{2}\sum_{i\in I} (u(x_i)-\xi_i)^2 + \frac{\alpha}{2}\norm{q}^2_{L^2(\Omega)}
  \end{equation}
  subject to
  \begin{equation}\label{eq:equation}
    \begin{aligned}
      -\Delta u &= q &&\qquad\text{in } \Omega\\
      u &=0 &&\qquad\text{on } \partial \Omega,
    \end{aligned}
  \end{equation}
  and
  \begin{equation}\label{eq:constraints}
    a\le q(x)\le b\quad\text{for a.a }x\in\Omega.
  \end{equation}
\end{subequations}
Here, $\Omega \subset \R^{d}$ is a bounded, convex, polygonal/polyhedral domain with
$d\in\set{2,3}$, $\alpha >0$, $a,b\in\R$, and $a<b$.  Problem \eqref{definition:contproblem1} seeks
to minimize the distance of the state $u$ to prescribed values $\xi_i\in\R$ at fixed points $x_i$ in
the interior of $\Omega$ for $i\in I=\set{1,2,\dots,N}$.

The consideration of such a cost functional involving pointwise evaluation is motivated by parameter
identification problems with pointwise measurements, see, e.g.,~\cite{MR2193509} for
numerical analysis of a problem with a similar cost functional and finite dimensional control
(parameter) variable. The optimal control problem~\eqref{definition:contproblem1} and its finite
element discretization is considered and analyzed in recent
publications~\cite{MR3449612,MR3523574,AnOtSa}. The goal of this paper is to significantly improve
the a priori error estimates from these papers, see Table~\ref{tab:overview} and the detailed
discussion below.

\begin{table}[htb]
  \centering
  \caption{Comparison of the orders of convergence of $\norm{\bar q- \bar q_h}_{L^2(\Om)}$} \label{tab:overview}
  \begin{tabular}{llcccc}
    \toprule
    Discretization && variational & cellwise constant & post processing\\
    \midrule
    \multirow{2}{*}{Known results} & $d=2$& $h$ &  $h\abs{\ln h}$ \\
                                   & $d=3$& $C(\varepsilon)h^{1-\varepsilon}$ &  $h^{\frac{1}{2}}\abs{\ln h}^2$ \\
    \midrule
    \multirow{2}{*}{Our approach} & $d=2$& $h^2 \abs{\ln h}^2$ & $h \abs{\ln h}$ & $h^2 \abs{\ln h}^2$  \\ 
                                  & $d=3$& $h \abs{\ln h}$ &  $h \abs{\ln h}$\\ 
    \bottomrule
  \end{tabular}
\end{table}

As in the above mentioned papers, we discretize the state equation~\eqref{eq:equation} with
conforming linear finite elements, see Section~\ref{sec:disc} for details. With $h>0$, we denote the
discretization parameter describing the maximal mesh size. For the control discretization, we
consider the following three approaches. For each of the approaches error estimates for the error
between the optimal control $\bar q$ and the discrete optimal control $\bar q_h$ in terms of the
discretization parameter $h$ are derived.

\begin{itemize}
  \item\emph{Variational discretization:} In this case, the control variable is not discretized
    explicitly. It is implicitly discretized through the optimality system leading to an optimal
    discrete control being not a mesh function, see~\cite{MR2122182}. For this case, estimates of
    order $\mathcal{O}(h)$  for $d=2$ and $\mathcal{O}(h^{1-\varepsilon})$ for $d=3$ are provided
    in~\cite{MR3449612,AnOtSa}. For the two-dimensional case we prove a quasi-optimal (up to a
    logarithmic term) estimate of order $\mathcal{O}(h^2\abs{\ln h}^2)$, see
    Theorem~\ref{theorem:result1.1}.
  \item\emph{Cellwise constant discretization:} In this case the, control variable is discretized
    by cellwise constant functions on the same mesh as used for the state variable. For this choice,
    error estimates of order $\mathcal{O}(h\abs{\ln h})$ and $\mathcal{O}(h^{\frac{1}{2}}\abs{\ln
    h}^2)$ for the dimensions $d=2$ and $d=3$, respectively, are derived in~\cite{AnOtSa}. We prove
    an error estimate $\mathcal{O}(h\abs{\ln h})$ for both $d=2$ and $d=3$ significantly improving
    the known result for the three-dimensional case, see Theorem~\ref{theorem:result2}.
  \item\emph{Post processing approach:} In this case, we use the approach suggested
    in~\cite{MR2114385} for an optimal control problem with $L^2$ tracking. That is, we discretize
    the control variable by cellwise constant functions and define a post-processed control through
    a projection formula~\eqref{eq:q_hat}. To our best knowledge, there are no results for this
    approach in the context of pointwise tracking in the literature. For the two-dimensional case we
    prove the same rate of convergence as for the variational discretization, i.e.,
    $\mathcal{O}(h^2\abs{\ln h}^2)$, see Theorem~\ref{theorem:result5}.
\end{itemize} 

These results in the case of cellwise constant discretizations for both $d=2$ and $d=3$ as well as
in the cases of variational discretization and of the post-processing approach for $d=2$ can not be
further improved, see Table~\ref{tab:overview}. To our best knowledge, the question is open, if it
is possible to prove second order error estimates (up to logarithmic terms) for the
three-dimensional case ($d=3$) on general quasi-uniform meshes. A possible way out is to use graded
meshes locally refined towards the points $\set{x_i}$. By the techniques from~\cite{MR3071172}, it
seems to be directly possible to prove second order estimates on such meshes. On such meshes also
the case of absence of one or both of the control bounds (i.e. $a=-\infty$ or/and $b=\infty$) can be
covered.

The structure of the paper is as follows. In the next section, we discuss the functional-analytic
setting of the problem, provide optimality conditions and derive regularity results. It turns out,
that although the adjoint state $\bar z$ possesses in general only $W^{1,s}(\Om)$ regularity with $s
< d/(d-1)$, the optimal control $\bar q$ is Lipschitz continuous due the presence of control
constrains. In Section~\ref{sec:disc}, the discrete problem for different control discretizations is
introduced and the corresponding optimality conditions are stated. After some estimates for an
auxiliary equation in Section~\ref{sec:aux}, we discuss in Section~\ref{sec:green} properties of the
continuous and the discrete Green's functions. Thanks to recent results from~\cite{MR3614014}, we
are able to show that the discrete Green's function has similar growth behavior close to the
singularity as the continuous one. This property is an important ingredient to prove our main
results but is also of an independent interest.  Then, we prove error estimates of order ${\mathcal
O}(h\abs{\ln h})$ (see Table~\ref{tab:overview}) in Section~\ref{sec:error} and finally estimates
of order ${\mathcal O}(h^2\abs{\ln h}^2)$ in Section~\ref{sec:imp2}. The last section is devoted
to numerical results illustrating our error estimates. For both, the cellwise constant
discretization and the post-processing approach in two dimensions, the numerical results are
fully in agreement with the presented theory. We present also a three-dimensional example for the
post-processing approach and observe second order convergence, which is not covered by our theory,
see the discussion above and in Section~\ref{sec:num} of this issue.

%%%%%%%%%%%%%%%%%%%%%%%%%%%%%%%%%%%%%%%%%%%%%%%%%%%%%%%%%%%%%%
\section{Continuous Problem}\label{sec:continuousproblem}
%%%%%%%%%%%%%%%%%%%%%%%%%%%%%%%%%%%%%%%%%%%%%%%%%%%%%%%%%%%%%%

In this section, we give a rigorous definition of the continuous optimal control problem
\eqref{definition:contproblem1} and derive an optimality system as well as first regularity results
for the optimal control. To this end, let $Q=L^2(\Om)$ and
\[
  \Qad=\Set{q\in Q | a\leq q \leq b \text{ a.e.\ in } \Om}.
\]
Further, let $\frac{2d}{d+2}<s<\frac{d}{d-1}$ and $\frac{1}{s}+\frac{1}{s'}=1$. Then, in particular
there holds $s'>d$, $W^{1,s}(\Omega) \hookrightarrow L^2(\Omega)$, and $W^{1,s'}(\Om)\hookrightarrow
C(\bar\Om)$. The weak formulation of the state equation \eqref{eq:equation} reads as: For given
control $q\in Q$ find a state $u\in W_0^{1,s'}(\Omega)$ such that
\begin{equation} \label{definition:contproblem2}
  (\nabla u, \nabla \phi) = (q,\phi) \quad \forall \phi \in W_0^{1,s}(\Omega),
\end{equation}
where here and in the sequel, $(\cdot,\cdot)$ denotes the $L^2(\Om)$ inner product.

For the state equation one obtains (cf. \cite{MR3396210}, Theorem 3.2.1.2) the existence and
uniqueness of a solution $u\in H^2(\Om)\cap W^{1,s'}_0(\Omega)\hookrightarrow C(\bar\Om)$.  This allows us to
define a linear control-to-state mapping $S\colon Q \rightarrow C(\bar\Omega)$ as $Sq=u$ where $u$
is the solution of \eqref{definition:contproblem2}. We have the following standard estimate for
$Sq$:
\begin{equation}\label{ellipticregularity}
  \norm{Sq}_{L^\infty(\Om)}\le C \norm{Sq}_{H^2(\Omega)}\leq C \norm{q}_{L^2(\Om)}.
\end{equation}

For a mutually disjoint set of points $\set{x_i |i \in I}\subset \Omega$ with
$I=\set{1,2,\dots,N}\subset \N$ and prescribed target values $\set{\xi_i}_{i\in I}\subset \R$ at
these points, we define the cost functional $J\colon Q\times C(\bar \Omega) \rightarrow \R$ as
\[
  J(q,u)= \frac{1}{2}\sum_{i\in I} (u(x_i)-\xi_i)^2 + \frac{\alpha}{2}\norm{q}^2_{L^2(\Omega)}
\]
We then aim at solving the following optimal control problem:
\begin{equation} \label{definition:contproblem_red}
  \text{Minimize }J(q,u)\text{ subject to~\eqref{definition:contproblem2} and }(q,u)\in \Qad\times C(\bar\Om).
\end{equation}

\begin{theorem} \label{theorem:uniqueness_existence}
  Problem \eqref{definition:contproblem_red} has a unique solution $\bar q\in \Qad$.
\end{theorem}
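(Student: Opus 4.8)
The plan is to recast the problem in reduced form and invoke the classical direct method of the calculus of variations, exploiting strict convexity for uniqueness. First I would introduce the reduced cost functional $j(q) = J(q,Sq)$ on $Q = L^2(\Om)$, which by the embedding $S\colon Q \to C(\bar\Om)$ from \eqref{ellipticregularity} is well defined, since the point evaluations $q \mapsto (Sq)(x_i)$ are continuous linear functionals on $Q$. Thus $j$ is a sum of squares of continuous affine functionals plus the Tikhonov term $\tfrac{\alpha}{2}\norm{q}_{L^2(\Om)}^2$, so $j$ is continuous and convex; moreover, because $\alpha > 0$, it is strictly convex and coercive on $Q$, with $j(q) \ge \tfrac{\alpha}{2}\norm{q}_{L^2(\Om)}^2$.

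Next I would verify that $\Qad$ is nonempty, convex, bounded (hence the coercivity is not even strictly needed here, though it does no harm) and closed in $L^2(\Om)$; in fact it is weakly sequentially closed, being closed and convex. Taking a minimizing sequence $(q_n) \subset \Qad$, boundedness of $\Qad$ gives a weakly convergent subsequence $q_n \rightharpoonup \bar q$ in $L^2(\Om)$ with $\bar q \in \Qad$. Since $S$ is a bounded linear operator, $Sq_n \rightharpoonup S\bar q$ in $H^2(\Om)$, and by the compact embedding $H^2(\Om) \hookrightarrow\hookrightarrow C(\bar\Om)$ (using convexity/regularity of $\Om$) we get $Sq_n \to S\bar q$ in $C(\bar\Om)$, so $(Sq_n)(x_i) \to (S\bar q)(x_i)$ for each $i \in I$. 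Combined with weak lower semicontinuity of the convex continuous norm term, $j$ is weakly lower semicontinuous, so $j(\bar q) \le \liminf_n j(q_n) = \inf_{\Qad} j$, proving $\bar q$ is a minimizer. Uniqueness follows from strict convexity of $j$ on the convex set $\Qad$: if $\bar q_1 \ne \bar q_2$ were two minimizers, then $\tfrac12(\bar q_1 + \bar q_2) \in \Qad$ would have strictly smaller value, a contradiction.

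There is no genuine obstacle here; the only point requiring a little care is that $j$ is \emph{not} strictly convex in the variable $u$ through the tracking part alone (only $N$ point values enter), so strictness must be drawn from the $\alpha$-term in $q$ — which is exactly why the hypothesis $\alpha > 0$ appears. The compact embedding $H^2(\Om) \hookrightarrow\hookrightarrow C(\bar\Om)$, ensuring the point evaluations behave continuously along the minimizing sequence, is the one analytic input that should be cited explicitly (it follows from \eqref{ellipticregularity} together with Rellich–Kondrachov, since $\Om$ is bounded). Alternatively, one can avoid the compactness argument entirely: write $j$ directly as a continuous convex functional on $L^2(\Om)$ using that each $q \mapsto (Sq)(x_i)$ is in $(L^2(\Om))^*$, and then weak lower semicontinuity of continuous convex functionals gives the result immediately. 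I would present the latter, shorter route in the main text.
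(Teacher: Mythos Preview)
Your proposal is correct and is precisely the kind of standard argument the paper has in mind: the paper's own proof consists of a single sentence deferring to standard results (it cites Tr\"oltzsch's monograph), and your direct-method argument with strict convexity from the $\alpha$-term is exactly that standard route spelled out in detail.
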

\begin{proof}
  The proof can be done by standard arguments, cf., e.g.,~\cite{MR2583281}.
\end{proof}

With the reduced cost functional $j\colon Q\rightarrow \R$ given by means of the control-to-state
mapping $S$ as
\[
  j(q)=J(q,Sq),
\]
it is straightforward to see that problem \eqref{definition:contproblem_red} is equivalent to the problem
\begin{equation} \label{reducedproblem}
  \text{Minimize } j(q) \text{ subject to }q \in \Qad.
\end{equation}

\begin{lemma} \label{contderivatives}
  For $q,\delta q\in Q$, the first Fréchet derivative of the reduced function $j$ is given by
  \[
    j'(q)(\delta q)= (\alpha q+z , \delta q),
  \]
  where $z\in W^{1,s}_0(\Omega)$ solves
  \begin{equation}\label{eq:z}
    (\nabla z, \nabla \phi) = \sum_{i \in I} (S q(x_i) -\xi_i)\phi(x_i) \quad \forall \phi \in W_{0}^{1,s'}(\Omega).
  \end{equation}
  The second Fréchet derivative is given for $q,\delta q,\tau q \in Q$ by
  \[
    j''(q)(\delta q,\tau q) = \sum_{i \in I} S\delta q(x_i)S\tau q(x_i) + \alpha (\delta q, \tau q).
  \]
\end{lemma}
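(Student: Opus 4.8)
The plan is to compute the derivatives of $j(q) = J(q,Sq)$ by the chain rule, using the linearity of the control-to-state map $S$. First I would record that since $S$ is linear and bounded from $Q$ to $C(\bar\Om)$, and the map $u \mapsto \sum_{i\in I}(u(x_i)-\xi_i)^2$ is a smooth (quadratic) functional on $C(\bar\Om)$, the composition $j$ is infinitely Fréchet differentiable; only the first two derivatives are needed here. Differentiating $j$ directly gives
\[
  j'(q)(\delta q) = \sum_{i\in I}\bigl(Sq(x_i)-\xi_i\bigr)\,S\delta q(x_i) + \alpha(q,\delta q),
\]
and differentiating once more gives the stated formula for $j''(q)(\delta q,\tau q)$, where the term $\alpha(\delta q,\tau q)$ comes from the quadratic regularization and the pointwise product term from the bilinearity in $S\delta q$, $S\tau q$; the second derivative is constant in $q$ because $j$ is quadratic.

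The only nontrivial point is to rewrite the first term of $j'(q)(\delta q)$ in the adjoint form $(z,\delta q)$. Here I would introduce $z$ as the solution of the very weak adjoint problem \eqref{eq:z}: its right-hand side $\phi \mapsto \sum_{i\in I}(Sq(x_i)-\xi_i)\phi(x_i)$ is a bounded linear functional on $W_0^{1,s'}(\Om)$ because $W^{1,s'}(\Om)\hookrightarrow C(\bar\Om)$ and the $x_i$ lie in $\Om$; since $s'>d$, the dual space argument (Lax–Milgram on $W_0^{1,s}(\Om)$ against $W_0^{1,s'}(\Om)$, or equivalently the transposition/Stampacchia method) yields a unique $z\in W_0^{1,s}(\Om)$. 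To identify $(z,\delta q)$ with the pointwise term, test \eqref{eq:z} with $\phi = S\delta q$, which is admissible since $S\delta q \in H^2(\Om)\cap W_0^{1,s'}(\Om) \hookrightarrow W_0^{1,s'}(\Om)$; this gives
\[
  (\nabla z,\nabla S\delta q) = \sum_{i\in I}\bigl(Sq(x_i)-\xi_i\bigr)\,S\delta q(x_i).
\]
On the other hand, testing the state equation \eqref{definition:contproblem2} for $S\delta q$ with $\phi = z \in W_0^{1,s}(\Om)$ gives $(\nabla S\delta q, \nabla z) = (\delta q, z)$. Comparing the two identities yields $\sum_{i\in I}(Sq(x_i)-\xi_i)S\delta q(x_i) = (z,\delta q)$, hence $j'(q)(\delta q) = (\alpha q + z, \delta q)$ as claimed.

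The main obstacle, such as it is, is the regularity bookkeeping: one must check that the chosen exponents $s,s'$ make every pairing legitimate — in particular that the adjoint right-hand side is in $W^{-1,s}(\Om) = (W_0^{1,s'}(\Om))^*$, that $z \in W_0^{1,s}(\Om)$ is a valid test function in the primal equation \eqref{definition:contproblem2}, and that $S\delta q$ is a valid test function in \eqref{eq:z}. All of these follow from the embeddings $W^{1,s'}(\Om)\hookrightarrow C(\bar\Om)$ and $H^2(\Om)\hookrightarrow C(\bar\Om)$ recorded before the lemma together with $\frac1s+\frac1{s'}=1$, so no genuinely hard estimate is involved; the remaining steps are the routine chain-rule computation and the symmetric "integration by parts" exchange between the primal and adjoint equations.
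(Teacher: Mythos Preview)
Your argument is correct and is exactly the standard computation the paper alludes to; the paper's own proof merely states that the proof is standard and cites \cite[Theorem~4]{MR861100} for the $W^{1,s}_0(\Om)$ regularity of $z$, which you obtain via the transposition/Lax--Milgram argument instead. No gaps.
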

\begin{proof}
  The proof is standard. The regularity of $z$ can be found, e.g., in~\cite[Theorem 4]{MR861100}.
\end{proof}
For $i\in I$, let $z_i\in W^{1,s}_0(\Om)$ be given as the solution of
\begin{equation}\label{eq:zi}
  (\nabla z_i,\nabla \phi)=\phi(x_i)\quad\forall\phi\in W^{1,s'}_0(\Om).
\end{equation}
Then, it holds by construction that the solution $z$ of~\eqref{eq:z} can be expressed as
\begin{equation}\label{eq:rep_z}
  z=\sum_{i\in I}(Sq(x_i)-\xi_i)z_i.
\end{equation}

\begin{theorem} \label{theorem:continuousproblem:optimalityconditions}
  A control $\bar q\in \Qad$ with associated state $\bar u=S\bar q \in W_{0}^{1,s'}(\Omega)$ is an
  optimal solution to the problem \eqref{definition:contproblem_red} if and only if there exists an
  adjoint state $\bar z \in W_0^{1,s}(\Omega)$ such that
  \begin{alignat}{2}
    (\nabla\bar u,\nabla \phi)&=(\bar q, \phi) && \forall \phi \in W_0^{1,s}(\Omega),\notag\\
    (\nabla \bar z, \nabla \phi) &= \sum_{i \in I} (\bar u(x_i) -\xi_i)\phi(x_i) &\quad& \forall \phi \in W_{0}^{1,s'}(\Omega),\label{theorem:continuousproblem:adjointeq}\\
    (\alpha \bar q+\bar z, \delta q - \bar q) &\geq 0 &\quad& \forall
    \delta q \in \Qad.\label{theorem:continuousproblem:optcondeq}
  \end{alignat}
\end{theorem}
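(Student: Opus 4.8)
The plan is to prove this as a standard characterization of the minimizer of a smooth, strictly convex functional over a closed convex set, using the first-order optimality condition together with the adjoint representation of the derivative from Lemma~\ref{contderivatives}. First I would recall that, since $S$ is linear and continuous and the squared point evaluations are convex, the reduced functional $j$ is convex (indeed strictly convex thanks to the term $\frac{\alpha}{2}\norm{q}^2$ with $\alpha>0$) and Fréchet differentiable on $Q$; moreover $\Qad$ is a nonempty, closed, convex subset of $Q$. Hence, by the classical variational inequality characterization of minimizers of a differentiable convex functional on a convex set, $\bar q\in\Qad$ solves \eqref{reducedproblem} if and only if
\[
  j'(\bar q)(\delta q-\bar q)\ge 0\qquad\forall\,\delta q\in\Qad.
\]

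The second step is to insert the explicit form of $j'(\bar q)$ provided by Lemma~\ref{contderivatives}: $j'(\bar q)(\delta q-\bar q)=(\alpha\bar q+\bar z,\delta q-\bar q)$, where $\bar z\in W^{1,s}_0(\Om)$ is precisely the solution of \eqref{eq:z} with $q=\bar q$, i.e.\ of the adjoint equation \eqref{theorem:continuousproblem:adjointeq}. Together with the state equation \eqref{definition:contproblem2} for $\bar u=S\bar q$, this immediately yields the three displayed relations of the theorem. Conversely, if $\bar q\in\Qad$, $\bar u$, and $\bar z$ satisfy the three relations, then by uniqueness of the adjoint problem $\bar z$ coincides with the adjoint state associated with $\bar q$, so \eqref{theorem:continuousproblem:optcondeq} reads exactly $j'(\bar q)(\delta q-\bar q)\ge 0$ for all $\delta q\in\Qad$; convexity of $j$ then gives $j(\delta q)\ge j(\bar q)+j'(\bar q)(\delta q-\bar q)\ge j(\bar q)$, so $\bar q$ is optimal.

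There is no serious obstacle here — the argument is routine once the differentiability and convexity of $j$ and the regularity of the adjoint state (from Lemma~\ref{contderivatives}, citing \cite[Theorem 4]{MR861100}) are in hand. The only point that needs a word of care is the well-posedness of the adjoint equation in the correct function space: since the right-hand side $\sum_{i\in I}(\bar u(x_i)-\xi_i)\delta_{x_i}$ is a linear functional on $W^{1,s'}_0(\Om)$ (because $W^{1,s'}_0(\Om)\hookrightarrow C(\bar\Om)$ for $s'>d$), there is a unique solution $\bar z\in W^{1,s}_0(\Om)$, which is what makes the equivalence between the variational inequality and the optimality system rigorous. I would therefore state the proof briefly: establish convexity and differentiability of $j$, invoke the standard first-order condition, substitute the expression for $j'$, and note that both implications follow from uniqueness of the adjoint problem.
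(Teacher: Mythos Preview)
Your proposal is correct and follows essentially the same approach as the paper: invoke the standard first-order variational inequality characterization $j'(\bar q)(\delta q-\bar q)\ge 0$ for all $\delta q\in\Qad$ (the paper cites \cite[Theorem~1.4]{MR0271512}) and then substitute the adjoint representation of $j'$ from Lemma~\ref{contderivatives}. Your write-up is more explicit about convexity and the converse direction, but the underlying argument is the same.
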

\begin{proof}
  It holds by a standard result (cf. \cite[Theorem 1.4]{MR0271512}) that $\bar q$ is a solution to
  \eqref{definition:contproblem1} if and only if $j'(\bar q)(\delta q-\bar q)\geq 0$ for all
  $\delta q \in \Qad$. Then, Lemma~\ref{contderivatives} implies the assertion.
\end{proof}

\begin{prop} \label{projectionformula}
  A control $\bar q\in\Qad$ is the solution to the optimal control
  problem~\eqref{definition:contproblem_red} if and only if $\bar q$ and the solution $\bar z$
  of~\eqref{theorem:continuousproblem:adjointeq} fulfill the projection formula
  \[
    \bar q = P_{[a,b]} \left( -\frac{1}{\alpha} \bar z \right).
  \]
  Here the projection $P_{[a,b]}$ is given by
  \[
    P_{[a,b]}(g(x))= \min \left( b, \max \left( a, g(x) \right) \right)
  \]
  for  $g(x) \in \R \cup \{-\infty,\infty\} $ at $x\in \Omega$.
\end{prop}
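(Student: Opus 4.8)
The plan is to show that the variational inequality~\eqref{theorem:continuousproblem:optcondeq} is equivalent to the pointwise projection formula, which is a standard argument once one observes that the variational inequality decouples pointwise in $\Omega$. Since $\Qad$ is nonempty, closed, and convex in $Q=L^2(\Om)$, Theorem~\ref{theorem:continuousproblem:optimalityconditions} already tells us that $\bar q$ is optimal if and only if it satisfies~\eqref{theorem:continuousproblem:optcondeq} together with the state and adjoint equations, so it suffices to prove that~\eqref{theorem:continuousproblem:optcondeq} holds if and only if $\bar q = P_{[a,b]}(-\tfrac{1}{\alpha}\bar z)$ pointwise a.e.

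First I would argue the ``if'' direction. Suppose $\bar q(x) = P_{[a,b]}(-\tfrac{1}{\alpha}\bar z(x))$ for almost all $x\in\Om$. By the elementary characterization of the projection onto the interval $[a,b]$ in $\R$, this is equivalent to the pointwise inequality $(\alpha\bar q(x)+\bar z(x))(v-\bar q(x)) \ge 0$ for every $v\in[a,b]$ and almost every $x\in\Om$. Given any $\delta q\in\Qad$, we have $\delta q(x)\in[a,b]$ for a.a.\ $x$, so we may insert $v=\delta q(x)$ and integrate over $\Om$ to obtain~\eqref{theorem:continuousproblem:optcondeq}. Note that $\bar q\in\Qad$ indeed holds since the range of $P_{[a,b]}$ is $[a,b]$, and $\bar q\in L^2(\Om)$ because $\bar z\in W^{1,s}_0(\Om)\hookrightarrow L^2(\Om)$ and truncation is nonexpansive.

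For the ``only if'' direction I would use a standard localization/contradiction argument. Assume~\eqref{theorem:continuousproblem:optcondeq} holds but the set where $\bar q(x)\neq P_{[a,b]}(-\tfrac1\alpha\bar z(x))$ has positive measure; then on a subset of positive measure the pointwise inequality $(\alpha\bar q(x)+\bar z(x))(v-\bar q(x))\ge 0$ fails for some $v\in[a,b]$. One constructs a competitor $\delta q\in\Qad$ that equals $\bar q$ outside a suitably chosen measurable set $E$ of positive measure and equals an appropriate value in $[a,b]$ on $E$ (e.g.\ $\delta q = \bar q + \chi_E(v-\bar q)$ for a constant $v$ on a set where the integrand has a fixed sign), which forces $(\alpha\bar q+\bar z,\delta q-\bar q)<0$, contradicting~\eqref{theorem:continuousproblem:optcondeq}. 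Alternatively, and more cleanly, one observes that~\eqref{theorem:continuousproblem:optcondeq} says precisely that $\bar q$ minimizes the quadratic functional $v\mapsto \tfrac\alpha2\norm{v}^2_{L^2(\Om)} + (\bar z,v)$ over $\Qad$; since this functional separates into an integral of $x$-wise strictly convex functions and $\Qad$ imposes only the pointwise box constraint, the minimizer is obtained by minimizing pointwise, which yields exactly $\bar q(x)=P_{[a,b]}(-\tfrac1\alpha\bar z(x))$.

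The argument is entirely routine; the only point that requires a moment of care is the measurability of the constructed competitor in the contradiction argument and the handling of the possibly unbounded values $\pm\infty$ in the statement of $P_{[a,b]}$ (which only matters in the limiting cases $a=-\infty$ or $b=\infty$ discussed in the introduction, and there $P_{[a,b]}$ reduces to a one-sided truncation or the identity). I do not anticipate any genuine obstacle here, as the result is the classical pointwise reformulation of the first-order optimality condition for box-constrained $L^2$ controls.
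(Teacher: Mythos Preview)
Your argument is correct and is precisely the standard pointwise reformulation of the box-constrained $L^2$ optimality condition. The paper itself does not give a proof but simply cites \cite[Theorem~2.28]{MR2583281} for the equivalence of~\eqref{theorem:continuousproblem:optcondeq} and the projection formula, so your sketch is essentially what that reference contains; there is nothing to add.
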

\begin{proof}
  A proof of the equivalence of~\eqref{theorem:continuousproblem:optcondeq} and this projection
  formula can be found in~\cite[Theorem 2.28]{MR2583281}. 
\end{proof}

\begin{prop}\label{prop:H1}
  For the solution $\bar q\in \Qad$ of the optimal control
  problem~\eqref{definition:contproblem_red}, it holds
  \[
    \bar q\in H^1(\Om).
  \]
\end{prop}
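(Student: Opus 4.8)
The plan is to exploit the projection formula $\bar q=P_{[a,b]}\bigl(-\tfrac{1}{\alpha}\bar z\bigr)$ from Proposition~\ref{projectionformula}. Although $\bar z$ has in general only $W^{1,s}(\Om)$-regularity with $s<d/(d-1)\le 2$, so that $\bar z\notin H^1(\Om)$, the singularities of $\bar z$ sit precisely at the finitely many interior points $x_i$, and near each of them the truncation $P_{[a,b]}$ will either act trivially or flatten $\bar q$ to a constant. Combined with the Lipschitz continuity of $P_{[a,b]}\colon\R\to\R$ (so that $P_{[a,b]}$ maps $H^1$-functions to $H^1$-functions, by Stampacchia's theorem), this gives $\bar q\in H^1$ on the members of a finite open cover of $\Om$, and a gluing argument then yields $\bar q\in H^1(\Om)$.

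First, I would test~\eqref{theorem:continuousproblem:adjointeq} with $\phi\in C_c^\infty(\Om\setminus\set{x_i\mid i\in I})$ to see that $\Delta\bar z=0$ there in the sense of distributions, hence $\bar z\in C^\infty(\Om\setminus\set{x_i\mid i\in I})$ by Weyl's lemma. Next, using~\eqref{eq:rep_z}, write $\bar z=\sum_{i\in I}c_i z_i$ with $c_i:=\bar u(x_i)-\xi_i$, and split each Green's function $z_i$ (solving~\eqref{eq:zi}) as the fundamental solution of $-\Delta$ with pole at $x_i$ plus a harmonic remainder with smooth boundary data; on the convex polygonal/polyhedral domain $\Om$ the latter is $H^2$ (and bounded) by elliptic regularity, cf.~\cite{MR3396210}. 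Three local statements follow: (a) on $\Om_\rho:=\Om\setminus\bigcup_i\overline{B(x_i,\rho)}$, for every small $\rho>0$, $\bar z$ is smooth in the interior and belongs to $H^2$ near $\pa\Om$ (no $x_i$ lies there), hence $\bar z\in H^1(\Om_\rho)$; (b) near a point $x_i$ with $c_i=0$ the singular term is absent, so $\bar z$ is smooth there; (c) near a point $x_i$ with $c_i\ne0$ one has $\bar z=c_i\,(\text{fundamental solution at }x_i)+(\text{bounded})$, and since that fundamental solution tends to $+\infty$ at $x_i$, $\bar z(x)\to\sgn(c_i)\cdot\infty$ as $x\to x_i$.

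From (c) it follows that, shrinking $\rho$ if necessary, on each ball $B(x_i,2\rho)$ with $c_i\ne0$ we have either $-\tfrac{1}{\alpha}\bar z<a$ everywhere or $-\tfrac{1}{\alpha}\bar z>b$ everywhere (recall $a,b\in\R$), so $\bar q\equiv a$ or $\bar q\equiv b$ on $B(x_i,2\rho)$; in particular $\bar q\in H^1(B(x_i,2\rho))$. On the balls with $c_i=0$, $\bar q=P_{[a,b]}(-\tfrac{1}{\alpha}\bar z)$ with $\bar z$ smooth, so again $\bar q\in H^1(B(x_i,2\rho))$. On $\Om_\rho$ we have $\bar z\in H^1(\Om_\rho)$ by (a), hence $\bar q\in H^1(\Om_\rho)$. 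Since $\set{\Om_\rho}\cup\set{B(x_i,2\rho)\mid i\in I}$ is a finite open cover of $\Om$ and $\bar q\in L^\infty(\Om)\subset L^2(\Om)$, a partition-of-unity argument shows that the local weak gradients patch together to a global one in $L^2(\Om)^d$, giving $\bar q\in H^1(\Om)$.

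The \emph{main obstacle} is part (c): one must show that $\bar z$ genuinely diverges to $+\infty$ or $-\infty$ at each $x_i$ with $c_i\ne0$ — not merely that $\bar z\notin H^1$ near $x_i$ — which requires the precise pointwise asymptotics of the Green's function at its pole ($-\tfrac{1}{2\pi}\ln\abs{x-x_i}$ for $d=2$, $\tfrac{1}{4\pi}\abs{x-x_i}^{-1}$ for $d=3$) together with a bound on the remainder (and on the contributions $c_jz_j$, $j\ne i$) that prevents it from cancelling this singularity. The remaining ingredients — interior smoothness, $H^2$-regularity up to the boundary on the convex polygon/polyhedron, and the gluing — are standard.
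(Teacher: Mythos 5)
Your argument is correct, but it is a genuinely different route from what the paper does: the paper disposes of Proposition~\ref{prop:H1} with a one-line citation to an external lemma (\cite[Lemma~3.3]{MR3264224}), whereas you give a self-contained proof. Your mechanism --- decompose $\bar z=\sum_i c_i z_i$, split each $z_i$ into the fundamental solution plus an $H^2$ corrector, observe that near each $x_i$ with $c_i\neq 0$ the divergence of the fundamental solution forces $-\alpha^{-1}\bar z$ outside $[a,b]$ so that $\bar q$ is locally constant, and handle the complement via interior smoothness plus Lipschitz stability of $P_{[a,b]}$ in $H^1$ --- is essentially the machinery the paper itself builds later: Lemma~\ref{lemma:cutoffcont} is your blow-up statement (c), Lemma~\ref{lemma:lipschitzatedges} is your regularity away from the poles (a), and Lemma~\ref{lemma:boundednessofprojection} is exactly your ``$\bar q$ is constant on a ball around each active $x_i$'' conclusion, upgraded to $W^{1,\infty}(\Om)$. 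The notable difference is at the gluing step: the paper's Lemma~\ref{lemma:boundednessofprojection} invokes Proposition~\ref{prop:H1} (i.e.\ the cited external result) precisely to justify patching the local regularity into a global statement, while you replace that dependence with an explicit finite-cover/partition-of-unity argument, which is sound since the local weak gradients agree on overlaps and the cover is finite. What your approach buys is independence from the external reference (and, with the slightly stronger local statements of Lemma~\ref{lemma:lipschitzatedges}, it would deliver $W^{1,\infty}$ directly); what the citation buys the authors is brevity. Your identification of (c) as the point needing care is also apt --- it is exactly where the paper leans on the known Green's function asymptotics~\eqref{proof:boundednessofprojection2.1} and on the boundedness of the remaining terms $c_jz_j$, $j\neq i$, near $x_i$.
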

\begin{proof}
  This result follows directly from~\cite[Lemma~3.3]{MR3264224}.
\end{proof}

%%%%%%%%%%%%%%%%%%%%%%%%%%%%%%%%%%%%%%%%%%%%%%%%%%%%%%%%%%%%%%
\section{Discrete Problem}\label{sec:disc}
%%%%%%%%%%%%%%%%%%%%%%%%%%%%%%%%%%%%%%%%%%%%%%%%%%%%%%%%%%%%%%

We approximate the continuous state equation~\eqref{definition:contproblem2} using a Galerkin finite
element discretization. For this discretization, we use a family of triangulations $\set{\Th}$.  A
cell $K\in \Th$ has the diameter $h_K$. The discretization parameter $h$ is given as $h= \max_{K\in
\Th}h_K$.  We also require the triangulation to be regular and quasi-uniform.

For discretizing the state and adjoint equations, we consider the conforming space $V_h\subset
W^{1,\infty}(\Om)$ of linear finite elements on the triangulation $\Th$
\[
  V_h = \Set{ v_h \in C(\bar \Omega) | v_h\vert_K \in \PP_1(K)\; \forall K \in \Th \text{
  and } v_h\bigr\rvert_{\partial\Omega} = 0 }.
\]

We consider two types of discretizations for the control variable. The first type is the so-called
variational discretization introduced by~\cite{MR2122182}. Here, the control variable is not
explicitly discretized.  As second possibility, we consider a piecewise constant control
discretization on the family of triangulations $\set{\Th}$ introduced for the discretization of the
states. Then, we define the space of discrete controls as
\[
  Q_h^c = \Set {q_h \in Q | q_h\vert_K \in \PP_0(K) \text{ for all } K\in \Th}.
\]
where $\PP_0(K)$ denotes the space of piecewise constant polynomials on a cell $K$. The discrete
admissible set is then defined as $\Qhad^c=Q_h^c\cap\Qad$.

In the following, we introduce properties for the discrete problem similar to the continuous case in
Section~\ref{sec:continuousproblem} before. To this end, $\Qhad$ serves as placeholder for either $\Qad$
(for variational discretization) or $\Qhad^c$ (for cellwise constant discretization).

The discrete state equation for $u_h \in V_h$ with given $q \in Q$ reads as
\begin{equation} \label{definitiondiscretestateeq}
  (\nabla u_h, \nabla \phi_h) = (q,\phi_h) \quad \forall \phi_h \in V_h
\end{equation}
and the discrete analog to~\eqref{definition:contproblem_red} has the form
\begin{equation} \label{definition:discreteproblem_red}
  \text{Minimize }J(q_h,u_h)\text{ subject to~\eqref{definitiondiscretestateeq} and }(q_h,u_h)\in
  \Qhad\times V_h.
\end{equation}
Again, we can define the discrete control-to-state mapping with $S_h\colon Q\rightarrow V_h$ as $S_hq=u_h$.
We define the discrete reduced cost functional $j_h\colon Q\rightarrow \R$ by
\[
  j_h(q)= J(q,S_hq).
\]

We start with the following stability result for the discrete solution operator $S_h$.

\begin{lemma} \label{lemma:stability}
  For $q\in Q$, it holds
  \[
    \norm{S_hq}_{L^\infty(\Om)}\le C \norm{q}_{L^2(\Om)}.
  \]
\end{lemma}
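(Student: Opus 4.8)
The plan is to identify $S_h q$ with the Ritz projection of the continuous state $Sq$ and then combine the maximum-norm stability of this projection on quasi-uniform meshes with the elliptic regularity estimate~\eqref{ellipticregularity}. First, subtracting the discrete state equation~\eqref{definitiondiscretestateeq} from the weak formulation~\eqref{definition:contproblem2} tested with an arbitrary $\phi_h\in V_h$ (note that $V_h\subset W^{1,\infty}(\Om)$ with zero trace, hence $V_h\subset W^{1,s}_0(\Om)$) gives the Galerkin orthogonality $(\na(Sq-S_hq),\na\phi_h)=0$ for all $\phi_h\in V_h$. Thus $S_hq=R_h(Sq)$, where $R_h\colon H^1_0(\Om)\to V_h$ is the standard $H^1_0$-projection. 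Moreover, by~\eqref{ellipticregularity} we already know $Sq\in H^2(\Om)\hookrightarrow C(\bar\Om)$ with $\norm{Sq}_{L^\infty(\Om)}\le C\norm{q}_{L^2(\Om)}$.

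Second, I would invoke the classical fact that, on the regular and quasi-uniform family $\{\Th\}$ over the convex polygonal/polyhedral domain $\Om$, the projection $R_h$ is stable in the maximum norm uniformly in $h$,
\[
  \norm{R_h v}_{L^\infty(\Om)} \le C\,\norm{v}_{L^\infty(\Om)} \qquad \text{for all } v\in H^1_0(\Om)\cap C(\bar\Om),
\]
(equivalently, for the present purpose one may use $W^{1,p}$-stability of $R_h$ for a fixed $p\in(d,\infty)$ together with $Sq\in H^2(\Om)\hookrightarrow W^{1,p}(\Om)\hookrightarrow C(\bar\Om)$, which is slightly less delicate). Combining this with the first step immediately yields
\[
  \norm{S_h q}_{L^\infty(\Om)} = \norm{R_h(Sq)}_{L^\infty(\Om)} \le C\,\norm{Sq}_{L^\infty(\Om)} \le C\,\norm{q}_{L^2(\Om)},
\]
which is the assertion.

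The step I expect to be the main obstacle is the second ingredient, namely the logarithm-free maximum-norm (or $W^{1,p}$-) stability of the finite element projection on quasi-uniform meshes over a convex polyhedron: this is a known but non-trivial result that has to be quoted carefully from the literature. If one wishes to avoid it, an alternative is a duality argument through the discrete Green's function: for fixed $x_0\in\bar\Om$ let $g_h^{x_0}\in V_h$ solve $(\na\phi_h,\na g_h^{x_0})=\phi_h(x_0)$ for all $\phi_h\in V_h$; then $S_hq(x_0)=(\na S_hq,\na g_h^{x_0})=(q,g_h^{x_0})$ by~\eqref{definitiondiscretestateeq}, so $\abs{S_hq(x_0)}\le\norm{q}_{L^2(\Om)}\norm{g_h^{x_0}}_{L^2(\Om)}$, and it remains to bound $\sup_{x_0}\norm{g_h^{x_0}}_{L^2(\Om)}$ uniformly in $h$. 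Since the continuous Green's function itself lies in $L^2(\Om)$ for $d\le 3$, this follows from the comparison between $g_h^{x_0}$ and its continuous counterpart established in Section~\ref{sec:green} (via the results of~\cite{MR3614014}).
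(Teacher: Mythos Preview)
Your argument is correct, but the paper takes a more elementary route that sidesteps precisely the obstacle you flag. Instead of invoking the (delicate) logarithm-free $L^\infty$-stability of the Ritz projection, the paper simply uses the triangle inequality $\norm{S_hq}_{L^\infty(\Om)}\le\norm{Sq}_{L^\infty(\Om)}+\norm{Sq-S_hq}_{L^\infty(\Om)}$ together with the \emph{sub-optimal} error estimate $\norm{Sq-S_hq}_{L^\infty(\Om)}\le Ch^{2-d/2}\norm{Sq}_{H^2(\Om)}$ from~\cite[p.~168]{MR1930132}. The latter is an elementary consequence of $H^2$-regularity and an inverse inequality, with no logarithmic factor and no subtle pointwise theory needed; combined with~\eqref{ellipticregularity} it gives $\norm{S_hq}_{L^\infty(\Om)}\le C(1+h^{2-d/2})\norm{q}_{L^2(\Om)}$ directly. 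So your approach buys a cleaner conceptual picture (stability of $R_h$), while the paper's buys self-containment and avoids importing a heavy external result. Your alternative via the discrete Green's function is also valid and is in fact exactly Lemma~\ref{lemma:boundl2}; note, however, that the paper proves that lemma using the very same sub-optimal $L^\infty$ error estimate, so the two routes ultimately rest on the same elementary ingredient.
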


\begin{proof}
  The proof is standard. The sub-optimal $L^{\infty}$ error estimate \cite[p.\ 168]{MR1930132} implies
  with~\eqref{ellipticregularity} that
  \[
    \norm{Sq - S_hq}_{L^\infty(\Om)}\leq C h^{2-\frac{d}{2}}\norm{Sq}_{H^2(\Om)} \leq C
    h^{2-\frac{d}{2}} \norm{q}_{L^2(\Om)}.
  \]
  Hence,
  \[
    \norm{S_hq}_{L^\infty(\Om)} \leq \norm{Sq}_{L^\infty(\Om)} + \norm{Sq-S_hq}_{L^\infty(\Om)}\le
    C(1+h^{2-\frac d2})\norm{q}_{L^2(\Om)}
  \]
  implies the assertion.
\end{proof}

\begin{theorem} \label{theorem:uniqueness_existence_discrete}
  Problem \eqref{definition:discreteproblem_red} admits a unique solution $\bar q_h\in \Qhad$.
\end{theorem}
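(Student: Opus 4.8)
The plan is to mimic the proof of Theorem~\ref{theorem:uniqueness_existence} in the discrete setting, combining the direct method of the calculus of variations with strict convexity. The only place where the discrete (as opposed to continuous) structure genuinely enters is that the point evaluations $S_h q(x_i)$ must define continuous linear functionals on $Q=L^2(\Om)$; this is exactly what Lemma~\ref{lemma:stability} provides, since $\norm{S_hq}_{L^\infty(\Om)}\le C\norm{q}_{L^2(\Om)}$ yields $\abs{S_hq(x_i)}\le C\norm{q}_{L^2(\Om)}$ for each $i\in I$.

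First I would record the structural properties of the admissible set. In both cases — $\Qhad=\Qad$ for variational discretization and $\Qhad=\Qhad^c=Q_h^c\cap\Qad$ for cellwise constant discretization — the set $\Qhad$ is nonempty (the constant function $\tfrac{a+b}{2}$ belongs to it, using $a<b$), convex, closed in $L^2(\Om)$, and bounded, since every admissible $q$ satisfies $a\le q\le b$ a.e. Being convex and closed it is weakly sequentially closed in the reflexive space $L^2(\Om)$, and being bounded as well it is weakly sequentially compact. (In the cellwise constant case $\Qhad^c$ is a closed bounded subset of a finite-dimensional space, hence compact, and the argument simplifies.) Next I would observe that $j_h(q)=\tfrac12\sum_{i\in I}(S_hq(x_i)-\xi_i)^2+\tfrac{\alpha}{2}\norm{q}_{L^2(\Om)}^2$ is convex, continuous and weakly lower semicontinuous on $L^2(\Om)$: the first sum is the composition of the continuous linear maps $q\mapsto S_hq(x_i)$ with the convex continuous functions $t\mapsto\tfrac12(t-\xi_i)^2$, and the Tikhonov term is convex and continuous, with weak lower semicontinuity inherited from the norm. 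Moreover $j_h$ is strictly convex: a direct computation as in Lemma~\ref{contderivatives} gives $j_h''(q)(\delta q,\delta q)=\sum_{i\in I}(S_h\delta q(x_i))^2+\alpha\norm{\delta q}_{L^2(\Om)}^2\ge\alpha\norm{\delta q}_{L^2(\Om)}^2>0$ for $\delta q\neq 0$, since $\alpha>0$.

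Existence then follows by the direct method: take a minimizing sequence $(q_h^n)\subset\Qhad$ for $j_h$; by boundedness of $\Qhad$ a subsequence converges weakly in $L^2(\Om)$ to some $\bar q_h$, which lies in $\Qhad$ by weak closedness, and weak lower semicontinuity of $j_h$ gives $j_h(\bar q_h)\le\liminf_n j_h(q_h^n)=\inf_{\Qhad}j_h$, so $\bar q_h$ is a minimizer. Uniqueness is immediate from strict convexity of $j_h$ on the convex set $\Qhad$: if $\bar q_h$ and $\tilde q_h$ were two distinct minimizers, then $\tfrac12(\bar q_h+\tilde q_h)\in\Qhad$ would have strictly smaller value, a contradiction. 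I do not anticipate any real obstacle; the work is entirely routine once the boundedness of the point evaluations from Lemma~\ref{lemma:stability} is in hand.
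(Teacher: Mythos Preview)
Your argument is correct and is precisely the ``standard arguments'' the paper alludes to (just as for Theorem~\ref{theorem:uniqueness_existence}): direct method on the bounded, closed, convex set $\Qhad$ together with strict convexity of $j_h$, with Lemma~\ref{lemma:stability} supplying the continuity of the point evaluations $q\mapsto S_hq(x_i)$. There is nothing to add; the paper itself gives no further detail.
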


\begin{proof}
  As Theorem~\ref{theorem:uniqueness_existence}, this can be shown by standard arguments.
\end{proof}

As in the continuous case, we have the following expressions for the first and second derivatives of
$j_h$.

\begin{lemma}\label{discderivatives}
  For $q,\delta q \in Q$, the first Fréchet derivative of the reduced function $j$ is given by
  \[
    j_h'(q)(\delta q)= (\alpha q+z_h , \delta q),
  \]
  where $z_h\in V_h$ solves
  \begin{equation}\label{eq:zh}
    (\nabla z_h, \nabla \phi_h) = \sum_{i \in I} (S_h q(x_i) -\xi_i)\phi_h(x_i) \quad \forall \phi_h \in V_h.
  \end{equation}
  The second Fréchet derivative is given for $q,\delta q,\tau q \in Q$ by
  \[
    j_h''(q)(\delta q,\tau q) = \sum_{i \in I} S_h\delta q(x_i)S_h\tau q(x_i) + \alpha (\delta q, \tau q).
  \]
\end{lemma}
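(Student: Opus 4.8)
The plan is to exploit that $S_h$ is a bounded linear operator, so that $j_h$ is a quadratic functional whose first and second derivatives can be read off directly, and then to rewrite the first derivative in the claimed adjoint form by a standard testing argument, exactly as in the continuous case (Lemma~\ref{contderivatives}).

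First I would record that $S_h\colon Q\to V_h$ is linear by construction of \eqref{definitiondiscretestateeq} and bounded from $L^2(\Om)$ into $L^\infty(\Om)$ by Lemma~\ref{lemma:stability}; in particular, for each $i\in I$ the point evaluation $q\mapsto S_hq(x_i)$ is a bounded linear functional on $Q$. Hence
\[
  j_h(q)=\frac12\sum_{i\in I}(S_hq(x_i)-\xi_i)^2+\frac{\alpha}{2}\norm{q}^2_{L^2(\Om)}
\]
is a sum of squares of affine functionals plus a quadratic term, thus infinitely Fréchet differentiable, with
\[
  j_h'(q)(\delta q)=\sum_{i\in I}(S_hq(x_i)-\xi_i)(S_h\delta q)(x_i)+\alpha(q,\delta q)
\]
and, since $q\mapsto S_hq(x_i)$ is linear,
\[
  j_h''(q)(\delta q,\tau q)=\sum_{i\in I}(S_h\delta q)(x_i)(S_h\tau q)(x_i)+\alpha(\delta q,\tau q).
\]
The last identity is already the asserted formula for the second derivative (and shows it is independent of $q$, as expected for a quadratic functional).

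It then remains to identify the first sum with $(z_h,\delta q)$. For this I would test the discrete adjoint equation \eqref{eq:zh} with $\phi_h=S_h\delta q\in V_h$, obtaining $(\nabla z_h,\nabla S_h\delta q)=\sum_{i\in I}(S_hq(x_i)-\xi_i)(S_h\delta q)(x_i)$, and test the discrete state equation \eqref{definitiondiscretestateeq} for $S_h\delta q$ with $\phi_h=z_h\in V_h$, obtaining $(\nabla S_h\delta q,\nabla z_h)=(\delta q,z_h)$. Combining the two identities yields $\sum_{i\in I}(S_hq(x_i)-\xi_i)(S_h\delta q)(x_i)=(z_h,\delta q)$, hence $j_h'(q)(\delta q)=(\alpha q+z_h,\delta q)$.

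There is no genuine obstacle here; the only point requiring a word of justification is the boundedness of the point evaluations on $V_h$ needed to invoke Fréchet differentiability of the composition, which is supplied by Lemma~\ref{lemma:stability}. Everything else is the finite-dimensional analog of the computation carried out in the continuous setting.
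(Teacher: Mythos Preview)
Your argument is correct. The paper does not spell out a proof of this lemma; it treats it as the direct discrete analog of Lemma~\ref{contderivatives}, whose proof is declared ``standard.'' Your write-up is exactly that standard computation---differentiating the quadratic functional using linearity and boundedness of $S_h$, then identifying the adjoint representation via testing \eqref{eq:zh} with $S_h\delta q$ and \eqref{definitiondiscretestateeq} with $z_h$---so there is nothing to contrast.
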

For $i\in I$, let $z_{h,i}\in V_h$ be given as the solution of
\begin{equation}\label{eq:zih}
  (\nabla z_{h,i},\nabla \phi_h)=\phi_h(x_i)\quad\forall \phi_h\in V_h.
\end{equation}
Then, it holds by construction that the solution $z_h$ of~\eqref{eq:zh} can be expressed as
\begin{equation}\label{eq:rep_zh}
  z_h=\sum_{i\in I}(S_hq(x_i)-\xi_i)z_{h,i}.
\end{equation}

In the following theorem, we formulate the optimality conditions for the discrete problem. 
\begin{theorem} \label{theorem:discreteroblem:optimalityconditions}
  A control $\bar q_h\in \Qhad$  with associated state $\bar u_h=S_h\bar q_h \in V_h$ is an optimal
  solution to the problem \eqref{definition:discreteproblem_red} if and only if there exists and
  adjoint state $\bar z_h \in
  V_h$ such that
  \begin{alignat}{2}
    (\nabla\bar u_h,\nabla \phi_h)&=(\bar q_h, \phi_h) && \forall \phi_h \in V_h, \notag\\
    (\nabla \bar z_h, \nabla \phi_h) &= \sum_{i\in I} (\bar u_h(x_i) -\xi_i)\phi_h(x_i) &\quad & \forall \phi_h \in V_h,\label{theorem:discreteproblem:adjointeq} \\
    (\alpha \bar q_h+\bar z_h, \delta q_h - \bar q_h) &\geq 0
    \qquad&& \forall \delta q_h \in \Qhad.\label{theorem:discreteproblem:optcondeq}
  \end{alignat}
\end{theorem}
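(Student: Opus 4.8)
The plan is to mirror exactly the argument used for the continuous optimality system in Theorem~\ref{theorem:continuousproblem:optimalityconditions}. By construction of the discrete control-to-state map $S_h$, problem~\eqref{definition:discreteproblem_red} is equivalent to minimizing the discrete reduced functional $j_h$ over $\Qhad$. The set $\Qhad$ — whether it equals $\Qad$ or $\Qhad^c$ — is nonempty, convex, and closed in $Q=L^2(\Om)$; for $\Qhad^c$ this is immediate since it is the intersection of the closed convex set $\Qad$ with the finite-dimensional subspace $Q_h^c$. Moreover, by Lemma~\ref{discderivatives} we have
\[
  j_h''(q)(\delta q,\delta q)=\sum_{i\in I}\bigl(S_h\delta q(x_i)\bigr)^2+\alpha\norm{\delta q}_{L^2(\Om)}^2\ge\alpha\norm{\delta q}_{L^2(\Om)}^2\ge 0,
\]
so $j_h$ is convex (in fact strictly convex). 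Hence the standard result cited in the continuous case (cf.~\cite[Theorem 1.4]{MR0271512}) applies and yields that $\bar q_h\in\Qhad$ solves~\eqref{definition:discreteproblem_red} if and only if the variational inequality $j_h'(\bar q_h)(\delta q_h-\bar q_h)\ge 0$ holds for all $\delta q_h\in\Qhad$.

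The second step is to make this inequality explicit. Setting $\bar u_h=S_h\bar q_h\in V_h$, the identity $(\nabla\bar u_h,\nabla\phi_h)=(\bar q_h,\phi_h)$ for all $\phi_h\in V_h$ is simply the defining equation of $S_h\bar q_h$, so it is automatically satisfied. Letting $\bar z_h\in V_h$ be the function $z_h$ from Lemma~\ref{discderivatives} associated with $q=\bar q_h$, i.e. the solution of~\eqref{eq:zh} with $S_h\bar q_h(x_i)=\bar u_h(x_i)$, this equation is precisely the discrete adjoint equation~\eqref{theorem:discreteproblem:adjointeq}; it has a unique solution in $V_h$ by Lax--Milgram on the finite-dimensional space $V_h$, since point evaluation is a bounded functional on $V_h\subset C(\bar\Om)$. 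With this choice, Lemma~\ref{discderivatives} gives
\[
  j_h'(\bar q_h)(\delta q_h-\bar q_h)=(\alpha\bar q_h+\bar z_h,\delta q_h-\bar q_h),
\]
and substituting this into the variational inequality produces~\eqref{theorem:discreteproblem:optcondeq}. For the converse direction, given $\bar q_h\in\Qhad$ together with $\bar u_h,\bar z_h\in V_h$ satisfying the three relations, the first two identify $\bar u_h=S_h\bar q_h$ and $\bar z_h$ as the corresponding $z_h$ of Lemma~\ref{discderivatives}, so the third relation reads $j_h'(\bar q_h)(\delta q_h-\bar q_h)\ge 0$ for all $\delta q_h\in\Qhad$; convexity of $j_h$ and~\cite[Theorem 1.4]{MR0271512} then show that $\bar q_h$ is optimal.

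There is no real obstacle in this argument; everything reduces to known facts about minimization of convex functionals over closed convex sets and to the well-posedness of the finite-dimensional linear systems involved, and existence and uniqueness of $\bar q_h$ itself has already been established in Theorem~\ref{theorem:uniqueness_existence_discrete}. The only points that require (routine) verification are the convexity/closedness of $Q_h^c\cap\Qad$ in $L^2(\Om)$ and the solvability of the discrete adjoint equation in $V_h$, both of which are immediate.
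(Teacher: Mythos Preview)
Your proposal is correct and follows essentially the same approach as the paper: the paper's proof simply states that the assertion can be proved in the same way as the continuous analogue in Theorem~\ref{theorem:continuousproblem:optimalityconditions}, which in turn cites \cite[Theorem 1.4]{MR0271512} and Lemma~\ref{contderivatives}. Your argument spells out precisely these steps in the discrete setting, with a bit more detail on the convexity of $j_h$ and the well-posedness of the discrete adjoint equation.
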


\begin{proof}
  The assertion of the theorem can be proved in the same way as the continuous analogue in
  Theorem~\ref{theorem:continuousproblem:optimalityconditions}.
\end{proof}

\begin{remark}\label{rem:P}
  Similar to Proposition~\ref{projectionformula} on the continuous level,
  condition~\eqref{theorem:discreteproblem:optcondeq} can be formulated by means of the projection
  $P_{[a,b]}$. For $\Qhad=\Qad$, it holds
  \[
    \bar q_h = P_{[a,b]} \left( -\frac{1}{\alpha} \bar z_h \right).
  \]
  In the case $\Qhad=\Qhad^c$, the corresponding formula reads as
  \[
    \bar q_h = P_{[a,b]} \left( -\frac{1}{\alpha} \pi_h\bar z_h \right),
  \]
  where $\pi_h$ denotes the $L^2$ projection on $Q_h^c$.
\end{remark}

%%%%%%%%%%%%%%%%%%%%%%%%%%%%%%%%%%%%%%%%%%%%%%%%%%%%%%%%%%%%%%
\section{Finite Element Error Analysis for an Auxiliary Equation}\label{sec:aux}
%%%%%%%%%%%%%%%%%%%%%%%%%%%%%%%%%%%%%%%%%%%%%%%%%%%%%%%%%%%%%%

For the numerical analysis carried out later, we first need to bound the error $u(x_i)-u_h(x_i)$ for
given $q\in\Qad$. The are multiple results available for the state equation in case of a bounded
right-hand side, see e.g., \cite{MR0474884,MR0488859,MR645661} but mostly for $C^2$ smooth
boundaries in contrast to our case.

For given $f\in L^2(\Om)$ let $w\in H^1_0(\Om)$ be the solution of the auxiliary problem
\begin{equation}\label{eq:w}
  (\nabla w, \nabla \phi )=(f,\phi) \quad \forall \phi\in H_{0}^1(\Omega).
\end{equation}

\begin{lemma}\label{lemma:Lp-W1infty}
  Let $w\in H^1_0(\Om)$ be the solution of~\eqref{eq:w}. Provided that $f\in L^p(\Om)$ with $p>d$,
  it holds $w\in W^{1,\infty}(\Om)$ with
  \[
    \norm{\nabla w}_{L^\infty(\Om)}\le C \norm{f}_{L^p(\Om)}.
  \]
 \end{lemma}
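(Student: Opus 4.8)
The plan is to write $w$ as the sum of a part that is regular enough to embed into $W^{1,\infty}(\Om)$ and a finite collection of corner/edge singularity functions whose gradients are bounded \emph{thanks to the convexity of $\Om$}. Since $f\in L^p(\Om)$ with $p>d$, I would invoke the regularity theory for the Dirichlet problem for $-\Delta$ on polygonal ($d=2$) and polyhedral ($d=3$) domains --- for $d=2$ this is contained in~\cite{MR3396210}, for $d=3$ one additionally uses the edge--vertex calculus --- to obtain a decomposition
\[
  w = w_R + \sum_{j} c_j\,\eta_j S_j ,
\]
where the $\eta_j$ are fixed cut-off functions localizing near the corners and (for $d=3$) the edges of $\Om$, the $S_j$ are the associated singularity functions, $w_R\in W^{2,p}(\Om)$ with $\norm{w_R}_{W^{2,p}(\Om)}\le C\norm{f}_{L^p(\Om)}$, and the coefficients satisfy $\abs{c_j}\le C\norm{f}_{L^p(\Om)}$ with $C=C(\Om,p)$. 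For the regular part, the embedding $W^{2,p}(\Om)\hookrightarrow W^{1,\infty}(\Om)$ (valid since $p>d$ and $\Om$ is Lipschitz) gives $\norm{\na w_R}_{L^\infty(\Om)}\le C\norm{f}_{L^p(\Om)}$, so it only remains to bound each $\na(\eta_j S_j)$ in $L^\infty(\Om)$ by a constant depending on $\Om$; summing the finitely many terms then yields
\[
  \norm{\na w}_{L^\infty(\Om)}\le \norm{\na w_R}_{L^\infty(\Om)}+\sum_j\abs{c_j}\,\norm{\na(\eta_j S_j)}_{L^\infty(\Om)}\le C\norm{f}_{L^p(\Om)} .
\]

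The crux --- and the step I expect to require the most care --- is precisely this last bound, where convexity is used and where the three-dimensional geometry is delicate. For $d=2$ the singularity at a corner of interior angle $\omega<\pi$ is, in local polar coordinates, $S\sim r^{\pi/\omega}\sin(\tfrac{\pi}{\omega}\theta)$; since $\pi/\omega>1$ one has $\na S\sim r^{\pi/\omega-1}\to 0$ at the corner, so $\na(\eta S)\in L^\infty$. For $d=3$ the edge singular exponents are $\ge \pi/\omega>1$ (the dihedral angle $\omega$ being $<\pi$ by convexity), so the gradient of $\rho^{\pi/\omega}$ in the distance $\rho$ to the edge behaves like $\rho^{\pi/\omega-1}\to 0$; the vertex singularities have the form $r^{\lambda}\Phi(\theta)$ in the distance $r$ to the vertex, where $\lambda$ is determined by the first Dirichlet eigenvalue $\mu_1$ of the Laplace--Beltrami operator on the spherical cross-section of the corresponding cone via $\lambda(\lambda+1)=\mu_1$. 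A genuine vertex of a convex polyhedron is the apex of a pointed convex cone, which is strictly contained in a half-space; the half-space has hemispherical cross-section with $\mu_1=2$, so by strict domain monotonicity of Dirichlet eigenvalues $\mu_1>2$ and hence $\lambda>1$, giving $\na(r^{\lambda}\Phi(\theta))\sim r^{\lambda-1}\to 0$. In the exceptional resonance cases where a logarithmic factor appears, the accompanying exponent is still $>1$, so the gradient behaves like $r^{\lambda-1}\abs{\ln r}\to 0$ and stays bounded. This shows $\norm{\na(\eta_j S_j)}_{L^\infty(\Om)}\le C(\Om)$ for each $j$ and completes the plan.

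The two places where I would be most careful are the quantitative three-dimensional decomposition --- keeping the edge and vertex contributions separated, handling the mixed behavior near edge endpoints, and controlling the coefficients $c_j$ by $\norm{f}_{L^p(\Om)}$ --- and the verification that no singular term with \emph{unbounded} gradient occurs, which is exactly what convexity rules out. As an alternative I would consider a duality argument: writing $\na w(x)=\int_\Om \na_x G(x,y)f(y)\,\mathrm{d}y$ with $G$ the Green's function of $-\Delta$ on $\Om$ and using the pointwise bound $\abs{\na_x G(x,y)}\le C\abs{x-y}^{1-d}$ (which holds up to the boundary on convex domains), one obtains $\abs{\na w(x)}\le C\norm{f}_{L^p(\Om)}\,\bigl(\int_\Om\abs{x-y}^{(1-d)p'}\,\mathrm{d}y\bigr)^{1/p'}$ with $p'=p/(p-1)$, and the integral is bounded uniformly in $x$ because $p'<d/(d-1)$ makes $(1-d)p'+d-1>-1$.
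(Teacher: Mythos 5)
Your primary route (corner/edge/vertex singular decomposition plus the observation that convexity forces all singular exponents above $1$) is a genuinely different and in principle viable argument, but it is not what the paper does, and it is the harder of your two options. The paper's proof is short: for $d=2$ it simply cites Grisvard to get $w\in W^{2,\tilde p}(\Om)$ for \emph{some} $\tilde p>2$ on a convex polygon and concludes by $W^{2,\tilde p}(\Om)\hookrightarrow W^{1,\infty}(\Om)$ — no decomposition into singular functions is needed at all; for $d=3$ it uses precisely your ``alternative'': the pointwise bound $\abs{\nabla_y G(x,y)}\le \abs{x-y}^{-2}$ on convex domains (Maz'ya et al.) followed by H\"older, with the integrability check $2p'<3$ for $p>3$ that you carried out correctly. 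So your closing paragraph \emph{is} the paper's $d=3$ proof, and it also covers $d=2$ once one has the corresponding gradient bound $\abs{\nabla G}\le C\abs{x-y}^{-1}$. Regarding your main route, be aware that in three dimensions the decomposition $w=w_R+\sum_j c_j\eta_j S_j$ with scalar coefficients $c_j$ is not the correct form: the edge contributions carry coefficient \emph{functions} along each edge whose regularity must itself be controlled (Maz'ya--Rossmann/Dauge edge--vertex calculus), and the interaction near edge endpoints is delicate — exactly the issues you flag. That machinery buys you more (sharp weighted regularity, useful e.g.\ for graded meshes) but is disproportionate here; the Green's function argument gives the stated $L^\infty$ gradient bound in three lines.
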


\begin{proof}
  For $d=2$, the assertion follows from~\cite[Theorems~4.3.2.4 and~4.4.3.7]{MR3396210} using the convexity of $\Om$ which ensures the
  existence of $p>2$ with $w\in W^{2,p}(\Om)\hookrightarrow W^{1,\infty}(\Om)$.

  For $d=3$ and given $y\in\Om$, let $G(\cdot,y)$ be the Green's function associated to $y$.  Then,
  there holds by the convexity of $\Om$ and~\cite[Theorem 5.1.8]{MR2641539} that
  \[
    \abs{\nabla_{y} G(x,y)}\leq \abs{x-y}^{-2}\quad\forall x\in\Om,~x\neq y.
  \]
  This allows us to estimate by the H\"older inequality for
  $1=\frac{1}{p}+\frac{1}{p'}$ that
  \[
    \abs{\nabla w(y)}= \left\lvert\int_{\Omega}\nabla_{y} G(x,y)f(x)\,dx\right\rvert
    \leq \int_{\Omega} \abs{f(x)}\abs{x-y}^{-2}\, dx
    \leq \norm{f}_{L^p(\Omega)}\left\lVert\abs{x-y}^{-2}\right\rVert_{L^{p'}(\Omega)}.
  \]
  Since $p>3$, the second factor is bounded, which completes the proof.
\end{proof}

We continue with a regularity result for a the solution $w$ of~\eqref{eq:w} in a sub domain $\Omega_1
\Subset \Omega_0 \Subset \Omega$ provided that the right-hand side $f$ has higher regularity in
$\Omega_0$.

\begin{lemma} \label{lemma:auxiliary}
  Let  $\Omega_1 \Subset \Omega_0 \Subset \Omega$ with  $\Om_0$ smooth and $w$ be the solution
  of~\eqref{eq:w}. 
  \begin{enumerate}[(i)]
    \item Provided that $f\bigr\rvert_{\Omega_0}\in L^p(\Omega_0)$ for some $2\le p<\infty$, 
      it holds $w\in W^{2,p}(\Omega_1)$ and
      \[
        \norm{w}_{W^{2,p}(\Omega_1)}\leq C_p\bigl\{\norm{f}_{L^p(\Omega_0)} + \norm{f}_{L^2(\Om)}\bigr\}
      \]
      where $C_p \sim Cp$ for $p\to\infty$.
    \item Provided that $f\bigr\rvert_{\Omega_0}=0$, it holds $w\in W^{2,\infty}(\Omega_1)$ and
      \[
        \norm{w}_{W^{2,\infty}(\Omega_1)}\leq C \norm{f}_{L^2(\Om)}.
      \]
  \end{enumerate}
\end{lemma}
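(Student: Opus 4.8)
The plan is to reduce the lemma to standard interior elliptic regularity estimates combined with a cutoff-function argument. Both parts hinge on localizing the equation away from the boundary of $\Om$, where the domain is polygonal and hence problematic, into the smooth interior region where classical Agmon–Douglis–Nirenberg / Calderón–Zygmund theory applies. First I would pick a smooth cutoff function $\eta\in C^\infty_0(\Om)$ with $\eta\equiv 1$ on a neighborhood $\Om_0'$ of $\overline{\Om_1}$ with $\Om_1\Subset\Om_0'\Subset\Om_0$, and $\operatorname{supp}\eta\subset\Om_0$. Then $v=\eta w$ satisfies, in the weak sense, $-\Delta v = \eta f - 2\nabla\eta\cdot\nabla w - (\Delta\eta)\,w =: \tilde f$ in the smooth domain $\Om_0$ (or in all of $\R^d$ after trivial extension by zero), with $v=0$ near $\partial\Om_0$.

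For part (i), the strategy is a bootstrap. The basic energy estimate and the embedding $W^{1,s}_0(\Om)\hookrightarrow L^2(\Om)$-type bounds give $\norm{w}_{H^1(\Om)}\le C\norm{f}_{L^2(\Om)}$. Applying interior $W^{2,2}$ regularity on the smooth set already yields $w\in H^2(\Om_0')$, hence $\nabla w\in L^{2^*}$ locally; feeding this into $\tilde f$ one gains integrability for the right-hand side, and iterating the interior $W^{2,p}$ estimate $\norm{v}_{W^{2,p}(\Om_0')}\le C_p(\norm{\tilde f}_{L^p}+\norm{v}_{L^p})$ a finite number of steps reaches any given $p<\infty$. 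Here one must keep track of the dependence $C_p\sim Cp$ as $p\to\infty$; this follows from the Calderón–Zygmund constants for the Newtonian potential (or the Riesz transform bounds), which grow linearly in $p$, and since only finitely many — indeed a number depending on $d$ but not on $p$ — bootstrap steps are needed to pass from $L^2$ to $L^p$, the final constant retains the linear growth. At each step the lower-order terms $\nabla\eta\cdot\nabla w$ and $(\Delta\eta)w$ are controlled on the slightly larger intermediate set by the previous step's estimate, ultimately bounded by $\norm{f}_{L^p(\Om_0)}+\norm{f}_{L^2(\Om)}$.

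For part (ii), the right-hand side $f$ vanishes on $\Om_0$, so on the intermediate smooth set $w$ is harmonic: $-\Delta w = 0$ in $\Om_0$. Interior regularity for harmonic functions is qualitatively much stronger — $w$ is real-analytic in $\Om_0$ — and in particular the interior gradient and Hessian estimates give, for $\Om_1\Subset\Om_0'\Subset\Om_0$,
\[
  \norm{w}_{W^{2,\infty}(\Om_1)}\le C\,\norm{w}_{L^2(\Om_0')}\le C\,\norm{w}_{H^1(\Om)}\le C\,\norm{f}_{L^2(\Om)},
\]
where the first inequality is the standard mean-value / Caccioppoli-type estimate for harmonic functions and the last is the global energy bound for~\eqref{eq:w}. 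No cutoff bootstrap is even required here beyond quoting the harmonic interior estimate.

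The main obstacle I anticipate is the bookkeeping in part (i): ensuring the constant in the final $W^{2,p}(\Om_1)$ bound genuinely behaves like $Cp$ and not, say, like $C^p$ or $p!$. The danger is that a naive iteration multiplies a $p$-dependent Calderón–Zygmund constant at each of the bootstrap steps; one must observe that the number of steps needed to climb from exponent $2$ to exponent $p$ via Sobolev embedding is bounded independently of $p$ (it depends only on $d$), so the compounding produces at worst a fixed power of the linear-in-$p$ constant, and one can absorb this. Alternatively, and perhaps more cleanly, one can avoid iteration entirely by representing $v$ through the Newtonian potential of $\tilde f$ on $\R^d$ and invoking the $L^p$ boundedness of second derivatives of the Newtonian potential with its explicit $O(p)$ operator norm, after first establishing $\nabla w\in L^p(\Om_0)$ locally by a single embedding step from $w\in W^{2,p_0}(\Om_0)$ for a suitable $p_0$. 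I would present the cutoff-plus-interior-estimate version as the main line and remark that the constant growth is inherited from the Calderón–Zygmund kernel bounds.
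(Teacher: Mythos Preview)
Your overall architecture for part (i) --- cutoff function, localized equation $-\Delta v = \eta f - 2\nabla\eta\cdot\nabla w - (\Delta\eta)w$, and interior $W^{2,p}$ regularity --- is exactly what the paper does. However, your tracking of the constant in the ``main line'' has a genuine gap: you say the bootstrap compounds to ``at worst a fixed power of the linear-in-$p$ constant, and one can absorb this.'' A fixed power means $p^k$ for some $k>1$, and that cannot be absorbed into $Cp$; the statement demands linear growth. The repair is precisely the alternative you sketch at the end, and it is also what the paper does: first bootstrap to a \emph{fixed} exponent $p_0$ large enough that $W^{2,p_0}\hookrightarrow W^{1,\infty}$ locally (the paper uses $p_0=6$, which suffices for $d\le 3$ via $H^2(\Om)\hookrightarrow W^{1,6}(\Om)$), and only then make \emph{one} Calder\'on--Zygmund step to the target $p$. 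The point is that the embedding $W^{2,6}\hookrightarrow W^{1,p}$ holds with a constant bounded in $p$, so the lower-order terms $\nabla\eta\cdot\nabla w$ and $(\Delta\eta)w$ are controlled independently of $p$, and $C_p$ enters exactly once. You should make this the main argument rather than an afterthought.

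For part (ii) your route is genuinely different from the paper's and cleaner. The paper does not exploit harmonicity directly; instead it repeats the cutoff argument twice to bootstrap $w$ from $H^2$ to $H^3(\tilde\Om_1)$ and then to $H^5(\Om_1)\hookrightarrow W^{2,\infty}(\Om_1)$. Your observation that $f\rvert_{\Om_0}=0$ makes $w$ harmonic in $\Om_0$, so that the classical interior estimate $\norm{w}_{W^{2,\infty}(\Om_1)}\le C\norm{w}_{L^2(\Om_0')}\le C\norm{f}_{L^2(\Om)}$ applies in one stroke, is correct and avoids the Sobolev chain entirely.
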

\begin{proof}
  The proof of the first assertion follows along the lines of the proof of \cite[Lemma
  2.5]{MR3549870}. We choose a smooth $\tilde\Om_1$ such that $\Omega_1 \Subset \tilde\Omega_1
  \Subset\Omega_0$. We first prove that $w\in W^{2,6}(\tilde \Om_1)$. To this end, let $\tilde\omega
  \in [0,1]$ be a smooth cutoff function with the properties
  \[
    \begin{aligned}
      \tilde\omega &=1 &\quad&\text{in } \tilde\Omega_1,\\
      \tilde\omega  &=0 && \text{in }  \Omega\setminus \Omega_0.
    \end{aligned}
  \]
  Set $v$ as $v=\tilde\omega w$. Then, there holds by the product rule
  \[
    \begin{split}
      (\nabla v,\nabla \phi)&=(\nabla(\tilde \omega w),\nabla\phi)=(\tilde \omega\nabla
      w,\nabla\phi)+(w\nabla\tilde \omega,\nabla\phi)\\
      &=(\nabla w,\nabla(\tilde\omega\phi))-(\nabla
      w,\phi\nabla\tilde\omega)-(\nabla\cdot(w\nabla\tilde\omega),\phi)\\
      &=(\tilde\omega f,\phi)-2(\nabla\tilde\omega\nabla w,\phi)-(w\Delta\tilde\omega,\phi)
    \end{split}
  \]
  and therefore $v$ satisfies the following equation
  \[
    \begin{aligned}
      -\Delta v &= \tilde\omega f- 2 \nabla \tilde\omega\nabla w -w\Delta \tilde\omega    &\quad&
      \text{in }\Omega_0,\\
      v &=0 &&\text{on } \partial\Omega_0.
    \end{aligned}
  \]
  By~\cite[Corollary 9.10]{MR1814364}, the $W^{2,6}(\Om_0)$ norm of $v$ is bounded by the $L^6(\Om_0)$
  norm of the  right-hand side above. Using the smoothness of $\tilde\omega$, it follows
  \[
    \begin{split}
      \norm{w}_{W^{2,6}(\tilde\Omega_1)}&=\norm{v}_{W^{2,6}(\tilde\Omega_1)}\leq
      \norm{v}_{W^{2,6}(\Omega_0)}\leq C\norm{\tilde\omega f- 2 \nabla \tilde\omega\nabla w-w\Delta \tilde\omega }_{L^6(\Omega_0)}\\
      &\leq C\bigl\{\norm{f}_{L^6(\Omega_0)}+\norm{\nabla w}_{L^6(\Omega_0)}+\norm{w}_{L^6(\Omega_0)}\bigr\}.
    \end{split}
  \]
  By \eqref{ellipticregularity} and the continuous embedding $w\in H^2(\Omega)\hookrightarrow
  W^{1,6}(\Omega)$ we get
  \[
    \norm{w}_{W^{2,6}(\tilde\Omega_1)}\le C\bigl\{\norm{f}_{L^6(\Omega_0)}+
    \norm{f}_{L^2(\Om)}\bigr\}.
  \]
  For $p\le 6$, this already states the assertion.

  For $p>6$, we iterate the previous steps with
  \[
    \begin{aligned}
      &\omega (x) =1 &\quad& \text{in } \Omega_1,\\
      &\omega (x) =0 && \text{in } \Omega\setminus \tilde\Omega_1,
    \end{aligned}
  \]
  and use the smoothness of $\tilde\Om_1$ to estimate
  \[
    \begin{split}
      \norm{w}_{W^{2,p}(\Omega_1)}=\norm{v}_{W^{2,p}(\Omega_1)}\leq \norm{v}_{W^{2,p}(\tilde\Omega_1)}&\leq
      C_p\norm{\omega  f - 2 \nabla \omega \nabla w-w\Delta \omega  }_{L^p(\tilde\Omega_1)}\\
      &\leq C_p\bigl\{\norm{f}_{L^p(\tilde\Omega_1)}+\norm{\nabla w}_{L^p(\tilde\Omega_1)}+\norm{w}_{L^p(\tilde\Omega_1)}\bigr\},
    \end{split}
  \]
  where $C_p$ can be traced from the proof of \cite[Theorem 9.8, Theorem 9.9]{MR1814364}.  Exploiting
  $W^{2,6}(\tilde\Omega_1)\hookrightarrow W^{1,p}(\tilde\Omega_1)$ for all $1<p<\infty$, we have
  \[
    \norm{w}_{L^p(\tilde\Omega_1)}+\norm{\nabla w}_{L^p(\tilde\Omega_1)}\leq
    \norm{w}_{W^{2,6}(\tilde\Omega_1)}\le C\bigl\{\norm{f}_{L^6(\Omega_0)}+
    \norm{f}_{L^2(\Om)}\bigr\},
  \]
  which concludes the proof of the first assertion.

  The second assertion follows similarly. Noting that $\tilde\omega f=0$ on the whole domain
  $\Omega$ and the smoothness of $\Om_0$, the first step implies $v\in H^3(\Om_0)$ and hence $w\in
  H^3(\tilde\Om_1)$. Then, since also $\omega f=0$ on the whole $\Om$ and $\tilde\Om_1$ is smooth,
  the next step yields $v\in H^5(\tilde\Om_1)$ and consequently $w\in H^5(\Om_1)$. This implies the
  second assertion.
\end{proof}

Let $w_h\in V_h$ being the Ritz projection of $w$ given by
\begin{equation}\label{eq:wh}
  (\nabla w_h, \nabla \phi_h )=(f,\phi_h) \quad \forall v\in V_h.
\end{equation}
We will use the following Schatz-Wahlbin-type estimate to bound the error between the solutions
$w$ of~\eqref{eq:w} and $w_h$ of~\eqref{eq:wh} on a subset of $\Omega$.

\begin{prop} \label{theorem:compactconv}
  Let $\Omega_1 \Subset \Omega_0 \Subset \Omega$, $v\in H^1_0(\Om)\cap C(\bar\Om)$, and $v_h\in V_h$
  satisfying
  \[
    (\nabla (v-v_h),\nabla \phi_h) = 0 \quad \forall \phi_h \in V_h.
  \]
  Then, there are constants $C, C'>0$, $0<h_0<1$, and $r>0$ such that $C'h\leq r$,
  $\operatorname{dist}(\Omega_1,\partial \Omega_0)\geq r$, and
  $\operatorname{dist}(\Omega_0,\partial \Omega)\geq r$. For $0 <h \leq h_0$ and any $\phi_h\in
  V_h$, there holds
  \[
    \norm{v-v_h}_{L^{\infty}(\Omega_1)} \leq C \bigl\{ \abs{\ln
    rh}\norm{v-\phi_h}_{L^{\infty}(\Omega_0)}+r^{-\frac{d}{2}}\norm{v-v_h}_{L^2(\Omega_0)}\bigr\}.
  \]
\end{prop}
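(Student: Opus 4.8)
The statement is a local ("interior") maximum-norm error estimate of Schatz--Wahlbin type, and the natural approach is to reduce it to the classical interior estimates available in the literature (e.g.\ \cite{MR0474884} or the refined versions in \cite{MR645661}) by a covering/scaling argument. The plan is first to fix the geometry: choose $r>0$ and $h_0$ so that the three nested regions $\Omega_1\Subset\Omega_0\Subset\Omega$ are separated by distance at least $r$, and an intermediate layer $\Omega_1\Subset\tilde\Omega\Subset\Omega_0$ can be inserted with $\operatorname{dist}(\Omega_1,\partial\tilde\Omega)\sim r$ and $\operatorname{dist}(\tilde\Omega,\partial\Omega_0)\sim r$. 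The condition $C'h\le r$ guarantees that the mesh is fine enough relative to $r$ that the interior estimate applies on this configuration; one will want $h_0$ small enough that all the standard local inverse and superapproximation estimates on the quasi-uniform mesh are valid.

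The core step is to invoke the local $L^\infty$ estimate for the Ritz projection. In its standard form (see \cite{MR0474884,MR645661}), for regions $\tilde\Omega\Subset\Omega_0$ one has
\[
  \norm{v-v_h}_{L^\infty(\tilde\Omega)}\le C\abs{\ln h}\inf_{\phi_h\in V_h}\norm{v-\phi_h}_{L^\infty(\Omega_0)}
  + C\norm{v-v_h}_{L^{-\infty}(\Omega_0)}\text{ (a negative-norm term)},
\]
and the negative-norm term on $\Omega_0$ is in turn controlled by the $L^2$ norm. The task is then purely bookkeeping of the $r$-dependence: rescaling the reference configuration to one of unit separation introduces the factors $\abs{\ln rh}$ in front of the approximation term (the logarithm is scale-sensitive) and $r^{-d/2}$ in front of the $L^2$ term (this is exactly the $L^2\hookrightarrow$ weighted-negative-norm scaling in $d$ dimensions, since $\norm{\cdot}_{L^2}$ on a ball of radius $r$ scales with $r^{d/2}$ relative to the normalized ball). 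Since the inequality is to hold for \emph{any} $\phi_h\in V_h$, one simply keeps $\phi_h$ rather than taking the infimum, which is harmless.

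The main obstacle — and the reason for stating this as a separate proposition rather than citing it verbatim — is tracking the explicit dependence of the constants on $r$, and in particular verifying that the logarithmic factor is $\abs{\ln rh}$ (not merely $\abs{\ln h}$) and that no hidden negative power of $r$ worse than $r^{-d/2}$ creeps in through the interior regularity or the covering argument. I would handle this by a dilation $x\mapsto x/r$ mapping $\Omega_0$ to a region of diameter $O(1)$ with the separation normalized, applying the scale-invariant form of the interior estimate there, and then carefully transforming the norms back: $L^\infty$ is scale-invariant, $\nabla(v-v_h)$-orthogonality is preserved, the mesh size becomes $h/r$ so the logarithm becomes $\abs{\ln(h/r)}=\abs{\ln rh - \ln r^2}$, which up to constants absorbed into $C$ one writes as $\abs{\ln rh}$ using $C'h\le r<1$, and the $L^2$ term picks up the Jacobian factor $r^{-d/2}$. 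A secondary technical point is that $v$ is only assumed to be in $H^1_0(\Om)\cap C(\bar\Om)$, so one cannot use $H^2$-type interior regularity of $v$ itself; but this is precisely why the estimate is stated in terms of $\norm{v-\phi_h}_{L^\infty(\Omega_0)}$ with an arbitrary $\phi_h$, so no additional smoothness of $v$ is needed beyond continuity on $\bar\Omega_0$.
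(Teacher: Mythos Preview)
Your plan is correct in spirit but substantially more elaborate than what the paper does. The paper's proof is a two-line citation: it invokes \cite[Corollary 5.1]{MR0431753} (Schatz--Wahlbin), where the interior $L^\infty$ estimate is already stated with the explicit $r$-dependence, i.e.\ with the factor $\abs{\ln rh}$ and the $r^{-d/2}$ in front of the $L^2$ term. The only adjustment the paper makes is the standard trick of applying the cited estimate to the pair $(v-\phi_h, v_h-\phi_h)$ in place of $(v,v_h)$; Galerkin orthogonality gives $(\nabla((v-\phi_h)-(v_h-\phi_h)),\nabla\psi_h)=0$, and since $(v-\phi_h)-(v_h-\phi_h)=v-v_h$, the left-hand side is unchanged while the approximation term on the right becomes $\norm{(v-\phi_h)-0}_{L^\infty(\Omega_0)}=\norm{v-\phi_h}_{L^\infty(\Omega_0)}$ for arbitrary $\phi_h$.

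Your rescaling argument would also work and is a legitimate way to \emph{derive} the $r$-dependence from a unit-scale statement, but it is unnecessary here because the cited corollary already carries the explicit constants. One small caution on your bookkeeping: after dilation the logarithm is $\abs{\ln(h/r)}$, and relating this to $\abs{\ln(rh)}$ via $\ln(h/r)=\ln(rh)-2\ln r$ requires using both $r<1$ and $rh<1$ to argue that the extra $2\abs{\ln r}$ can be absorbed; this is fine under the stated hypotheses but should be said carefully. The paper sidesteps this entirely by citing the result in the form needed.
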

\begin{proof}
  See~\cite[Corollary 5.1]{MR0431753}. The corollary there is stated in a slightly different form.
  Applying it to $v-\phi_h-v_h+\phi_h$ yields the stated assertion.
\end{proof}

\begin{lemma} \label{lemma:pointwisebounds}
  Let $\Omega_1\Subset\Omega_0\Subset\Omega$ with smooth $\Om_0$. Further, let $w\in H^1_0(\Omega)$
  be the solution of~\eqref{eq:w} and $w_h\in V_h$ be the solution of~\eqref{eq:wh}. 
  \begin{enumerate}[(i)]
  \item Provided that $f\bigr\rvert_{\Omega_0}\in L^\infty(\Omega_0)$ there is $h_0>0$ such that 
      \[
        \norm{w-w_h}_{L^{\infty}(\Omega_1)}\leq C h^2\lh^2\bigl\{
        \norm{f}_{L^{\infty}(\Omega_0)} + \norm{f}_{L^2(\Om)}\bigr\}.
      \]
      holds for $h\le h_0$.
      \item Provided that $f\bigr\rvert_{\Omega_0}=0$, it holds
      \[
        \norm{w-w_h}_{L^{\infty}(\Omega_1)}\leq C h^2\lh\norm{f}_{L^2(\Om)}.
      \]
  \end{enumerate}
\end{lemma}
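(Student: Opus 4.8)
The plan is to localize the error estimate by introducing an intermediate sub-domain $\Omega_2$ with $\Omega_1\Subset\Omega_2\Subset\Omega_0$, $\Omega_2$ smooth, and then to combine two ingredients: the interior pointwise estimate from Proposition~\ref{theorem:compactconv} and the interior regularity from Lemma~\ref{lemma:auxiliary}. More precisely, applying Proposition~\ref{theorem:compactconv} with the pair $\Omega_1\Subset\Omega_2$ gives, for a suitable choice $\phi_h = I_h w$ (the Lagrange interpolant of $w$, which is well-defined since $w$ will be shown to be in $W^{2,p}(\Omega_2)\hookrightarrow C(\bar\Omega_2)$),
\[
  \norm{w-w_h}_{L^\infty(\Omega_1)}\le C\bigl\{\lh\,\norm{w-I_hw}_{L^\infty(\Omega_2)} + \norm{w-w_h}_{L^2(\Omega_2)}\bigr\}.
\]

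For part (i), the first term is controlled by a standard interpolation estimate $\norm{w-I_hw}_{L^\infty(\Omega_2)}\le Ch^2|\ln h|\,\norm{w}_{W^{2,p}(\Omega_2)}$ with $p$ chosen large (the extra $|\ln h|$ factor and the growth $C_p\sim Cp$ absorbed by taking $p\sim|\ln h|$, which is exactly the mechanism that produces the $\lh^2$ in the final bound); by Lemma~\ref{lemma:auxiliary}(i) the $W^{2,p}(\Omega_2)$ norm is bounded by $C_p\{\norm{f}_{L^p(\Omega_0)}+\norm{f}_{L^2(\Om)}\}\le C_p\{\norm{f}_{L^\infty(\Omega_0)}+\norm{f}_{L^2(\Om)}\}$ using $\abs{\Omega_0}<\infty$. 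The $L^2(\Omega_2)$ term is handled by the global $L^2$ finite element error estimate $\norm{w-w_h}_{L^2(\Om)}\le Ch^2\norm{w}_{H^2(\Om)}\le Ch^2\norm{f}_{L^2(\Om)}$ (Aubin--Nitsche, using convexity of $\Om$), which is even better than needed. For part (ii), since $f\rvert_{\Omega_0}=0$, Lemma~\ref{lemma:auxiliary}(ii) yields $w\in W^{2,\infty}(\Omega_2)$ with norm bounded by $C\norm{f}_{L^2(\Om)}$, so the interpolation term becomes $Ch^2|\ln h|\,\norm{f}_{L^2(\Om)}$ directly — only one logarithmic factor — and the $L^2$ term is again of order $h^2\norm{f}_{L^2(\Om)}$, giving the claimed bound.

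The step requiring the most care is the bookkeeping in part (i) of the $p$-dependence: one must track that the interpolation constant and the constant $C_p\sim Cp$ from Lemma~\ref{lemma:auxiliary} combine to something like $Cp^2$ (or $Cp$) times $h^{2-d/p}$, and then optimize over $p$. Choosing $p=\lh$ turns $h^{-d/p}=h^{-d/\lh}=e^{d}$ into an absolute constant while the polynomial factor in $p$ becomes $\lh^2$, yielding the stated $h^2\lh^2$. One should also make sure the geometry is consistent with the hypotheses of Proposition~\ref{theorem:compactconv} — namely that $r$ can be chosen with $C'h\le r\le\min(\operatorname{dist}(\Omega_1,\partial\Omega_2),\operatorname{dist}(\Omega_2,\partial\Omega))$ for all $h\le h_0$ — and that the logarithmic factor $\abs{\ln rh}$ there is comparable to $\lh$ for such $r$, which is immediate since $r$ is a fixed constant. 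The remaining estimates are entirely routine.
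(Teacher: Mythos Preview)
Your approach is essentially identical to the paper's: insert an intermediate smooth subdomain, apply the Schatz--Wahlbin localization (Proposition~\ref{theorem:compactconv}), bound the interpolation term via the interior $W^{2,p}$ (resp.\ $W^{2,\infty}$) regularity of Lemma~\ref{lemma:auxiliary}, and control the $L^2$ term by the global Aubin--Nitsche estimate. The only point to tidy up is the log-factor bookkeeping in part~(i): the interpolation estimate is $\norm{w-I_hw}_{L^\infty(\Omega_2)}\le Ch^{2-d/p}\norm{w}_{W^{2,p}(\Omega_2)}$ with a $p$-independent constant, so combined with $C_p\sim p$ from Lemma~\ref{lemma:auxiliary} one gets $Cp\,h^{2-d/p}$ (not $Cp^2$), which for $p=\lh$ equals $Ch^2\lh$; the \emph{second} logarithm then comes solely from the $\abs{\ln rh}$ prefactor in Proposition~\ref{theorem:compactconv}.
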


\begin{proof}
  We start with the first assertion. By Proposition~\ref{theorem:compactconv}, we have for a
  suitable chosen smooth $\tilde\Om_1$ with $\Omega_1 \Subset \tilde\Omega_1 \Subset\Omega_0$ and
  any $\phi_h\in V_h$
  \[
    \norm{w-w_h}_{L^{\infty}(\Omega_1)}\leq C \bigl\{ \abs{\ln
    rh}\norm{w-\phi_h}_{L^{\infty}(\tilde\Omega_1)}+ r^{-\frac d2}\norm{w-w_h}_{L^{2}(\tilde\Omega_1)}\bigr\}.
  \]
  Since $r$ is constant, we can use a standard result to estimate the second term, i.e.,
  \[
    r^{-\frac d2}\norm{w-w_h}_{L^{2}(\tilde\Omega_1)}\le r^{-\frac
    d2}\norm{w-w_h}_{L^{2}(\Omega)}\le Ch^2\norm{f}_{L^2(\Om)}.
  \]
  For the first term, we note that Lemma~\ref{lemma:auxiliary} ensures $w\in W^{2,p}(\tilde\Om_1)$ for all
  $2\le p<\infty$. For $\phi_h=i_hw$ being the standard nodal interpolant  of $w$, it holds
  \[
    \norm{w-i_hw}_{L^{\infty}(\tilde\Omega_1)} 
    \leq C h^{2-\frac dp}\norm{\nabla^2 w}_{L^p(\tilde\Omega_1)}
    \leq Cp h^{2- \frac dp} \bigl\{\norm{f}_{L^{\infty}(\Omega_0)} + \norm{f}_{L^2(\Om)}\bigr\}.
  \]
  Choosing $p= \abs{\ln h}$ for small $h$, we get $ph^{2-\frac dp}\leq C h^2\abs{\ln h}$, which
  implies the stated estimate.

  The second assertion follows similarly by using the second assertion of
  Lemma~\ref{lemma:auxiliary} ensuring $w\in W^{2,\infty}(\tilde \Om_1)$:
  \[
    \norm{w-i_hw}_{L^{\infty}(\tilde\Omega_1)} \le Ch^2\norm{\nabla^2 w}_{L^\infty(\tilde\Om_1)}\le
    Ch^2\norm{f}_{L^2(\Om)}.
  \]
\end{proof}

%%%%%%%%%%%%%%%%%%%%%%%%%%%%%%%%%%%%%%%%%%%%%%%%%%%%%%%%%%%%%%
\section{Estimates for the continuous and discrete Green's functions}\label{sec:green}
%%%%%%%%%%%%%%%%%%%%%%%%%%%%%%%%%%%%%%%%%%%%%%%%%%%%%%%%%%%%%%

In this section, we consider a point $x_0\in\Om$ and the associated Green's function solving
\begin{equation}\label{eq:g}
  \begin{aligned}
    -\Delta g&=\delta_{x_0} &\quad&\text{in }\Om,\\
    g&=0&&\text{on }\partial\Om.
  \end{aligned}
\end{equation}
It is well-known that $g\in W^{1,s}_0(\Om)$ for any $s<\frac d{d-1}$ with
\begin{equation}\label{eq:reg_g}
  \norm{\nabla g}_{L^s(\Om)}\le C,
\end{equation}
see, e.g.,~\cite[Theorem~4]{MR861100}.

\begin{lemma} \label{lemma:cutoffcont}
  Let $g$ be the solution of~\eqref{eq:g}. Then, for every $M>0$ there exists an open ball $B\subset
  \Om$ containing $x_0$ such that
  \[
    g(x)\geq M \quad\forall x\in B.
  \]
\end{lemma}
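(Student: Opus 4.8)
The plan is to exploit that the Green's function blows up at its pole $x_0$ while the ``regular part'' stays bounded, so that $g$ exceeds any prescribed threshold on a sufficiently small ball around $x_0$.

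First I would write $g=\Phi(\cdot-x_0)+v$, where $\Phi$ denotes the fundamental solution of $-\Delta$ on $\R^d$, i.e.\ $\Phi(z)=-\tfrac1{2\pi}\ln\abs{z}$ for $d=2$ and $\Phi(z)=\tfrac1{4\pi}\abs{z}^{-1}$ for $d=3$, so that $-\Delta\Phi(\cdot-x_0)=\delta_{x_0}$ in the sense of distributions on $\Om$. Subtracting this from~\eqref{eq:g} shows that $v=g-\Phi(\cdot-x_0)$ is harmonic in $\Om$, hence $v\in C^\infty(\Om)$ by Weyl's lemma; in particular the singular behaviour of $g$ at $x_0$ is precisely that of $\Phi(\cdot-x_0)$, which tends to $+\infty$ as $x\to x_0$ in both space dimensions.

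Next I would show that $v$ is bounded on $\Om$. Since $x_0$ is an interior point of $\Om$, the function $\Phi(\cdot-x_0)$ is smooth in a neighbourhood of $\pa\Om$, so that $C_0:=\max_{\pa\Om}\abs{\Phi(\cdot-x_0)}$ is finite. Away from $x_0$ the function $g$ solves $-\Delta g=0$, and because $g\in W^{1,s}_0(\Om)$ and $\Om$ is convex it extends continuously to $\bar\Om\setminus\{x_0\}$ with $g=0$ on $\pa\Om$. Consequently $v\in C(\bar\Om)$ solves the Dirichlet problem $-\Delta v=0$ in $\Om$ with boundary data $-\Phi(\cdot-x_0)$, and the maximum principle for harmonic functions yields $\norm{v}_{L^\infty(\Om)}\le C_0$.

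Finally, given $M>0$, I would use $\Phi(z)\to+\infty$ as $z\to0$ to choose $\delta>0$ so small that $B:=B_\delta(x_0)\subset\Om$ and $\Phi(x-x_0)\ge M+C_0$ whenever $0<\abs{x-x_0}<\delta$; then $g(x)=\Phi(x-x_0)+v(x)\ge M+C_0-C_0=M$ for all $x\in B\setminus\{x_0\}$, while $g(x)\to+\infty$ as $x\to x_0$, which is the assertion. The only step that genuinely needs care is the global boundedness of the \emph{regular part} $v$: this rests on the convexity of $\Om$, which makes $\Om$ regular for the Dirichlet problem, so that $v$ indeed attains its continuous boundary values and the maximum principle applies.
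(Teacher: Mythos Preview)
Your argument is correct and amounts to proving the asymptotic formula~\eqref{proof:boundednessofprojection2.1} that the paper simply cites: both approaches rest on writing $g$ as the fundamental solution plus a regular remainder near $x_0$. Note, however, that the global bound $\norm{v}_{L^\infty(\Om)}\le C_0$ via the maximum principle is more than you need---Weyl's lemma already gives $v\in C^\infty(\Om)$, so $v$ is bounded on any closed ball $\bar B_{\delta_0}(x_0)\subset\Om$, which is all the final step requires and lets you bypass the boundary-regularity discussion entirely.
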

\begin{proof}
  It is well known, see, e.g.,~\cite[(3.11)]{MR2434067} that the asymptotic behavior of $g$ for
  $x\to x_0$ is of type
\begin{equation}\label{proof:boundednessofprojection2.1}
    g(x) \approx \begin{cases} 
      C_1 \ln \frac1{\abs{x-x_0}}+C_2 & \text{for } d=2,\\
      C_1 \frac1{\abs{x-x_0}}+C_2 & \text{for } d=3
    \end{cases}
\end{equation}
with $C_1>0$. This directly implies the assertion.
\end{proof}

\begin{lemma} \label{lemma:lipschitzatedges}
  For the solution $g$ of~\eqref{eq:g} and any open ball $B \subset \Omega$ containing $x_0$, it holds
  \[
    g\in W^{1,\infty}(\Omega\setminus \bar B)\cap H^2(\Omega\setminus \bar B).
  \]
\end{lemma}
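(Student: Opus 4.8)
The plan is to localize the equation away from the singular point $x_0$ and then invoke interior-plus-boundary elliptic regularity for the Poisson equation on the convex polygonal/polyhedral domain $\Omega$. Since $B$ is an open ball containing $x_0$, the function $g$ solves $-\Delta g = 0$ in $\Omega \setminus \bar B$, so on the (relatively open) set $\Omega \setminus \bar B$ the right-hand side is smooth; the only sources of limited regularity are the outer boundary $\partial\Omega$ (corners/edges) and the fact that $g$ is merely in $W^{1,s}_0(\Omega)$ to begin with. First I would fix an intermediate ball $B'$ with $x_0 \in B \Subset B' \Subset \Omega$ and pick a smooth cutoff $\omega$ with $\omega = 0$ on $\bar B$ and $\omega = 1$ on $\Omega \setminus B'$. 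Setting $v = \omega g$, the product-rule computation exactly as in the proof of Lemma \ref{lemma:auxiliary} shows that $v \in W^{1,s}_0(\Omega)$ solves
\[
  -\Delta v = -2\nabla\omega\cdot\nabla g - g\,\Delta\omega =: F \quad\text{in }\Omega,\qquad v = 0 \text{ on }\partial\Omega,
\]
where $F$ is supported in $\bar B' \setminus B$, a compact subset of $\Omega$ on which (by interior regularity for $-\Delta g = 0$ away from $x_0$) $g$ is $C^\infty$; hence $F \in L^\infty(\Omega) \cap H^1(\Omega)$, and in particular $F \in L^p(\Omega)$ for every $p < \infty$.

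Next I would feed this into the global regularity theory for the Poisson problem on a bounded convex polygonal/polyhedral domain. By \cite[Theorem 3.2.1.2]{MR3396210} (already cited in the excerpt for the state equation), convexity gives $v \in H^2(\Omega)$ with $\|v\|_{H^2(\Omega)} \le C\|F\|_{L^2(\Omega)}$; since $v = g$ on $\Omega \setminus B'$, this yields $g \in H^2(\Omega \setminus \bar B)$. For the $W^{1,\infty}$ statement I would apply Lemma \ref{lemma:Lp-W1infty} (for $d=2$ directly, and its proof carries over verbatim in $d=3$ to $v$) to get $v \in W^{1,\infty}(\Omega)$ with $\|\nabla v\|_{L^\infty(\Omega)} \le C\|F\|_{L^p(\Omega)}$ for a suitable $p > d$; restricting to $\Omega\setminus B'$ and combining with the interior smoothness of $g$ on the remaining annular region $B'\setminus \bar B$ (where $g$ is harmonic, hence locally $W^{1,\infty}$ with a bound depending only on $\mathrm{dist}(\cdot, \partial B)$ and $\mathrm{dist}(\cdot,\partial B')$), we conclude $g \in W^{1,\infty}(\Omega \setminus \bar B)$. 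Since $B'$ was chosen with $\bar B \subset B'$, the annulus $B' \setminus \bar B$ is at positive distance from $x_0$, so these interior estimates are legitimate.

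The main obstacle, such as it is, is bookkeeping rather than a genuine difficulty: one must make sure the cutoff construction produces a right-hand side that is globally regular (which it does, because $\nabla\omega$ and $\Delta\omega$ vanish near $\partial\Omega$ as well as inside $B$, so $F$ is compactly supported strictly inside $\Omega$), and one must invoke the correct convex-polytope regularity result so that the corners/edges of $\partial\Omega$ do not obstruct $H^2$ regularity — this is precisely where convexity of $\Omega$ is used. A minor point is that Lemma \ref{lemma:Lp-W1infty} as stated gives $W^{1,\infty}$ for $-\Delta w = f$ with $f \in L^p$, $p > d$; since $F$ here lies in every $L^p$, this applies immediately, and no sharper estimate near $\partial\Omega$ is needed. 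I would not expect to need the full strength of Lemma \ref{lemma:auxiliary}; only its cutoff computation is reused, with the outer domain being $\Omega$ itself rather than a smooth subdomain.
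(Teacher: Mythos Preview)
Your argument is correct, and it is in fact a cleaner route than the paper's. The paper makes the opposite choice of intermediate ball: it takes $B' \Subset B$ (so $\omega=0$ on $B'$ and $\omega=1$ on $\Omega\setminus\bar B$), which has the advantage that $v=\omega g$ equals $g$ on all of $\Omega\setminus\bar B$ and no patching on an annulus is needed. However, the paper then treats the right-hand side $F=-2\nabla\omega\cdot\nabla g-g\Delta\omega$ only via the global regularity $g\in W^{1,s}(\Omega)$ and bootstraps in three rounds ($W^{2,s}\hookrightarrow H^1$, then $H^2\hookrightarrow W^{1,6}$, then $W^{1,\infty}$ via Lemma~\ref{lemma:Lp-W1infty}). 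Your observation that $g$ is harmonic---hence $C^\infty$---on the compact annulus supporting $\nabla\omega$ makes $F\in C^\infty_c(\Omega)$ immediately, so you reach $v\in H^2(\Omega)\cap W^{1,\infty}(\Omega)$ in a single step; the price is the easy patching on $B'\setminus\bar B$ by interior regularity, which you handle correctly since $\overline{B'\setminus\bar B}$ is compact in $\Omega\setminus\{x_0\}$. The only small point worth making explicit is the identification of $v\in W^{1,s}_0(\Omega)$ with the $H^1_0$ solution of $-\Delta v=F$ before invoking Grisvard's $H^2$ estimate and Lemma~\ref{lemma:Lp-W1infty}; this follows from uniqueness of harmonic functions in $W^{1,s}_0(\Omega)$, but is tacitly assumed in both your write-up and the paper's.
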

\begin{proof}
  There exists an open ball $B'\Subset B$ with $x_0\in B'$. As before, we consider a smooth cutoff
  function $\omega\colon\Om\to  [0,1]$ with the following properties
  \[
    \begin{aligned}
      \omega  &=1 &\quad &\text{in } \Omega\setminus \bar B,\\
      \omega  &=0 &&\text{in } B'.
    \end{aligned}
  \]
  Let $\phi\in W^{1,s'}_0(\Om)$. As in the proof of Lemma~\ref{lemma:auxiliary}, it holds for
  $v=\omega g$ that
  \[
      (\nabla v,\nabla \phi)
      =\omega(x_0)\phi(x_0)-2(\nabla\tilde\omega\nabla g,\phi)-(g\Delta\tilde\omega,\phi)
      =-2(\nabla\omega\nabla g,\phi)-(g\Delta\omega,\phi),
  \]
  since $\omega(x_0)=0$. Therefore, $v$ satisfies the following equation
  \[
    \begin{aligned}
      -\Delta v &=  -g\Delta \omega- 2 \nabla \omega\nabla g &\quad& \text{in } \Omega,\\
      v &=0 &&\text{on } \partial\Omega.
    \end{aligned}
  \]
  For any $s<\frac d{d-1}$, we have $g\in W^{1,s}(\Om)$ from~\eqref{eq:reg_g} and it follows $v\in
  W^{2,s}(\Om)$. Hence, by the smoothness of $\omega$, we obtain $g\in W^{2,s}(\Om\setminus \bar
  B)\hookrightarrow H^1(\Om\setminus \bar B)$ .

  Iterating the previous steps using $g\in H^1(\Om\setminus \bar B)$, we obtain $v\in H^2(\Om)$ and
  hence $g\in H^2(\Om\setminus \bar B)\hookrightarrow W^{1,6}(\Om\setminus\bar B)$.

  Iterating again using $g\in  W^{1,6}(\Om\setminus\bar B)$ implies $v\in W^{1,\infty}(\Om)$ by
  Lemma~\ref{lemma:Lp-W1infty}. Consequently, we obtain the assertion $g\in W^{1,\infty}(\Om\setminus
  \bar B)\cap H^2(\Om\setminus \bar B)$.
\end{proof}

Let $g_h\in V_h$ be the Ritz projection of $g$ given as solution of
\begin{equation}\label{eq:gh}
  (\nabla g_h,\nabla \phi_h)=\phi_h(x_0)\quad\forall \phi_h\in V_h.
\end{equation}

\begin{lemma} \label{lemma:l1convergence}
  For the solutions $g\in W^{1,s}_0(\Om)$ of~\eqref{eq:g} and $g_h\in V_h$ of~\eqref{eq:gh},
  it holds 
  \[
    \norm{g-g_h}_{L^1(\Om)}\leq Ch^2 \abs{\ln h}^2,
  \]
  where $C$ is independent of $h$.
\end{lemma}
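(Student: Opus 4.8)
The plan is to obtain the bound by an $L^1$--$L^\infty$ duality argument that converts the global $L^1$ error into a \emph{pointwise} finite element error at $x_0$ for an auxiliary problem with a bounded right-hand side, which is then exactly what Lemma~\ref{lemma:pointwisebounds} controls. Since $g\in W^{1,s}_0(\Om)\hookrightarrow L^1(\Om)$ and $g_h\in V_h\subset C(\bar\Om)$, we have $g-g_h\in L^1(\Om)$ and
\[
  \norm{g-g_h}_{L^1(\Om)}=\sup_{\norm{\psi}_{L^\infty(\Om)}\le 1}(g-g_h,\psi),
\]
the supremum being attained at $\psi=\sgn(g-g_h)$. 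For a fixed such $\psi$ I would let $w\in H^1_0(\Om)$ be the solution of~\eqref{eq:w} with $f=\psi$ and $w_h\in V_h$ its Ritz projection~\eqref{eq:wh}. Because $\psi\in L^\infty(\Om)\hookrightarrow L^p(\Om)$ for every $p<\infty$, Lemma~\ref{lemma:Lp-W1infty} gives $w\in W^{1,\infty}(\Om)\hookrightarrow W^{1,s'}_0(\Om)\cap C(\bar\Om)$, so that $w(x_0)$ and $w_h(x_0)$ are well defined.

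The key step is the identity $(g-g_h,\psi)=(w-w_h)(x_0)$. To prove it, I would test~\eqref{eq:g} with $\phi=w\in W^{1,s'}_0(\Om)$ to get $(\na g,\na w)=w(x_0)$; on the other hand the weak form $(\na w,\na\phi)=(\psi,\phi)$, a priori valid for $\phi\in H^1_0(\Om)$, extends by density of $H^1_0(\Om)$ in $W^{1,s}_0(\Om)$ and continuity (using $\na w\in L^{s'}(\Om)$ and $\psi\in L^{s'}(\Om)$) to all $\phi\in W^{1,s}_0(\Om)$, and testing with $\phi=g\in W^{1,s}_0(\Om)$ yields $(g,\psi)=(\na g,\na w)=w(x_0)$. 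On the discrete side, testing the $w$-equation with $g_h\in V_h\subset H^1_0(\Om)$, using the Galerkin orthogonality $(\na(w-w_h),\na g_h)=0$, and then~\eqref{eq:gh} with the test function $w_h\in V_h$, gives $(g_h,\psi)=(\na w,\na g_h)=(\na w_h,\na g_h)=w_h(x_0)$. Subtracting proves the identity.

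It then remains to bound $(w-w_h)(x_0)$ uniformly in $\psi$. I would fix, once and for all, concentric balls $\Om_1\Subset\Om_0\Subset\Om$ centered at $x_0$ with $\bar\Om_0\subset\Om$ (so $\Om_0$ is smooth); these are independent of $h$ and of $\psi$. Since $\psi\bigr\rvert_{\Om_0}\in L^\infty(\Om_0)$, Lemma~\ref{lemma:pointwisebounds}(i) applies and gives, for $h$ small enough,
\[
  \abs{(w-w_h)(x_0)}\le\norm{w-w_h}_{L^\infty(\Om_1)}\le Ch^2\lh^2\bigl\{\norm{\psi}_{L^\infty(\Om_0)}+\norm{\psi}_{L^2(\Om)}\bigr\}\le Ch^2\lh^2,
\]
using $\norm{\psi}_{L^\infty(\Om)}\le 1$ and $\norm{\psi}_{L^2(\Om)}\le\abs{\Om}^{1/2}$, with $C$ depending only on the fixed subdomains $\Om_0,\Om_1$. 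Taking the supremum over all admissible $\psi$ yields the claim.

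\textbf{Main obstacle.}
The difficulty is not analytic depth but keeping the distributional pairings legitimate: $g$ lives only in the low-regularity space $W^{1,s}_0(\Om)$ with $s<d/(d-1)<2$, so one cannot integrate $(g,\psi)$ by parts directly against the $H^1_0$ weak formulation of $w$; the extra regularity $w\in W^{1,\infty}(\Om)$ coming from Lemma~\ref{lemma:Lp-W1infty}---which crucially uses $\psi\in L^\infty(\Om)$ rather than merely $\psi\in L^2(\Om)$---is exactly what makes the identity $(g-g_h,\psi)=(w-w_h)(x_0)$ valid, and it also supplies, via the interior $W^{2,p}$ regularity of Lemma~\ref{lemma:auxiliary} that underlies Lemma~\ref{lemma:pointwisebounds}, the rate $h^2\lh^2$. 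A more computational alternative---splitting $\Om$ into a ball of radius $\sim h$ around $x_0$, where one estimates $g-g_h$ in $L^1$ using the explicit asymptotics~\eqref{proof:boundednessofprojection2.1} of $g$ together with pointwise bounds on the discrete Green's function, and the exterior $\Om\setminus\bar B$, where $g$ is smooth---would also work, but it duplicates effort; the duality argument above is the shortest route and uses only the lemmas already established.
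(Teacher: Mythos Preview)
Your proposal is correct and is essentially the paper's own proof: the paper also takes $f=\sgn(g-g_h)$, uses the identity $\norm{g-g_h}_{L^1(\Om)}=(f,g-g_h)=w(x_0)-w_h(x_0)$, and then invokes Lemma~\ref{lemma:pointwisebounds}(i) with $\norm{f}_{L^\infty(\Om)}\le 1$. Your version supplies more justification for that identity (the extension of the $w$-equation to $W^{1,s}_0$ test functions via $\nabla w\in L^\infty$) than the paper bothers to write out, but the strategy is identical.
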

\begin{proof}
  The assertion is a direct consequence of~\cite[Lemma 3.3 (ii)]{MR3072225}, cf.
  also~\cite{MR0471370} for $d=2$. However, the exponent of the log-term in~\cite[Lemma 3.3 (ii)]{MR3072225} is different from $2$ in the three-dimensional case. Therefore, we give a proof here, which is a direct consequence of Lemma~\ref{lemma:pointwisebounds},~(i).

Let $f=\sgn(g-g_h)$ and let $w \in H^1_0(\Omega)$ and $w_h \in V_h$ be the corresponding solutions of~\eqref{eq:w} and~\eqref{eq:wh}. There holds
\[
\norm{g-g_h}_{L^1(\Om)} =(f,g-g_h) = w(x_0) - w_h(x_0) \le Ch^2 \abs{\ln h}^2 \norm{f}_{L^\infty(\Omega)}\le Ch^2 \abs{\ln h}^2,
\]
where we have used $\norm{f}_{L^\infty(\Omega)} \le 1$.
\end{proof}

\begin{lemma}\label{lemma:boundl2}
  For the solution  $g_h\in V_h$ of~\eqref{eq:gh}, it holds 
  \[
    \norm{g_h}_{L^2(\Om)}\le C
  \]
  with $C$ independent of $h$.
\end{lemma}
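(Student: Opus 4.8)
The plan is to argue by duality, writing
\[
  \norm{g_h}_{L^2(\Om)} = \sup\Set{ (g_h,\phi) | \phi\in L^2(\Om),\ \norm{\phi}_{L^2(\Om)}\le 1 },
\]
and fixing an arbitrary such $\phi$. I would introduce the discrete solution $w_h = S_h\phi\in V_h$, i.e.\ $w_h$ solving $(\nabla w_h,\nabla\phi_h)=(\phi,\phi_h)$ for all $\phi_h\in V_h$, which is exactly the auxiliary equation~\eqref{eq:wh} (with right-hand side $\phi$) in the discrete space.

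The key step is to rewrite the pairing $(g_h,\phi)$ using two Galerkin identities. Since $g_h\in V_h$, testing the equation for $w_h$ with $\phi_h=g_h$ gives $(\phi,g_h)=(\nabla w_h,\nabla g_h)$. Conversely, since $w_h\in V_h$, testing the defining equation~\eqref{eq:gh} for $g_h$ with $\phi_h=w_h$ gives $(\nabla g_h,\nabla w_h)=w_h(x_0)$. Combining the two yields the identity $(g_h,\phi)=w_h(x_0)$. It then remains to estimate this point value: $\abs{w_h(x_0)}\le\norm{w_h}_{L^\infty(\Om)}=\norm{S_h\phi}_{L^\infty(\Om)}\le C\norm{\phi}_{L^2(\Om)}\le C$ by Lemma~\ref{lemma:stability}. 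Taking the supremum over all admissible $\phi$ then gives $\norm{g_h}_{L^2(\Om)}\le C$ with $C$ independent of $h$.

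I do not expect a real obstacle here. The only point requiring a moment's thought is the realization that, via the two Galerkin orthogonalities, the dual pairing $(g_h,\phi)$ collapses to the point evaluation $w_h(x_0)$ of the discrete adjoint solution, to which the already-established $L^\infty$-stability of $S_h$ (Lemma~\ref{lemma:stability}) applies verbatim. An alternative approach — splitting $g_h=(g_h-g)+g$, bounding $\norm{g}_{L^2(\Om)}$ via the embedding $W^{1,s}_0(\Om)\hookrightarrow L^2(\Om)$ from Section~\ref{sec:continuousproblem}, and then controlling $\norm{g-g_h}_{L^2(\Om)}$ — would require a finite element $L^2$-error estimate for the Ritz projection of the low-regularity function $g\notin H^1(\Om)$, which is considerably less immediate; the duality argument bypasses this difficulty entirely.
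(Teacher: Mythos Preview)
Your argument is correct. Both proofs rest on a duality trick, but they deploy it differently. The paper takes precisely the route you dismiss at the end: it splits $\norm{g_h}_{L^2(\Om)}\le\norm{g-g_h}_{L^2(\Om)}+\norm{g}_{L^2(\Om)}$, bounds $\norm{g}_{L^2(\Om)}$ via~\eqref{eq:reg_g}, and handles the error by writing $\norm{g-g_h}_{L^2(\Om)}^2=w(x_0)-w_h(x_0)$ for an auxiliary pair $(w,w_h)$ with right-hand side $e=g-g_h$, then applying the sub-optimal $L^\infty$ estimate directly to obtain $\norm{g-g_h}_{L^2(\Om)}\le Ch^{2-d/2}$. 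Your route is more economical: you apply duality to $\norm{g_h}_{L^2(\Om)}$ itself, collapse $(g_h,\phi)$ to the single point value $w_h(x_0)$, and invoke Lemma~\ref{lemma:stability}, which already packages the same $L^\infty$ bound. This avoids the continuous $g$ and $w$ entirely. The paper's detour buys the quantitative byproduct $\norm{g-g_h}_{L^2(\Om)}\le Ch^{2-d/2}$, which you do not obtain but which is not used elsewhere; your closing remark that the splitting approach is ``considerably less immediate'' is therefore slightly off---the paper shows it goes through just as cleanly once one applies duality to the error rather than to $g_h$.
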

\begin{proof}
  We write the discretization error as $e=g-g_h$ and  define $w \in H^1_0(\Omega)$ as the solution of 
  \[
    (\nabla w, \nabla \phi) = (e, \phi) \quad \forall \phi\in H^1_0(\Omega)
  \]
  and $w_h \in V_h$ as the solution of
  \[
    (\nabla w_h, \nabla \phi_h) = (e, \phi_h) \quad \forall \phi_h \in V_h.
  \]
  Then, the $L^2$ error on $\Omega$ can be expressed as
  \[
    \norm{e}^2_{L^2(\Omega)} = (e, g) - (e, g_h) =(\nabla w,\nabla g) - (\nabla w_h,\nabla g_h) 
    =w(x_0)-w_h(x_0)
  \]
  and the sub-optimal $L^{\infty}$ error estimate \cite[p. 168]{MR1930132} implies
  \[
    \norm{e}_{L^2(\Om)}^2\le C h^{2-\frac d2}\norm{e}_{L^2(\Om)}.
  \]
  Hence, we get
  \[
    \norm{g_h}_{L^2(\Om)}\le C h^{2-\frac d2} + \norm{g}_{L^2(\Om)}\le C
  \]
  due to~\eqref{eq:reg_g}.
\end{proof}

\begin{lemma} \label{lemma:convergencewithoutsingularity} 
  For the solutions $g\in W^{1,s}_0(\Om)$ of~\eqref{eq:g} and $g_h\in V_h$ of~\eqref{eq:gh} and any
  open ball $B \subset \Omega$ containing $x_0$, there is $h_0>0$ such that 
  \[
    \norm{g-g_h}_{L^2(\Omega\setminus \bar B)}\leq C h^2 \lh
  \]
  for all $h\le h_0$.
\end{lemma}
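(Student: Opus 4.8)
The plan is to combine the interior ("compact convergence") estimate from Proposition~\ref{theorem:compactconv} with the global $L^1$ bound from Lemma~\ref{lemma:l1convergence} and the $L^2$-stability bound from Lemma~\ref{lemma:boundl2}. Fix an open ball $B\ni x_0$. Since we are free to shrink the set on which we measure the error, we may choose an intermediate ball $B'$ with $x_0\in B'\Subset B$ and work on the region $\Om\setminus\bar B$, which is at positive distance from $x_0$. On this region $g$ has no singularity: by Lemma~\ref{lemma:lipschitzatedges} we have $g\in H^2(\Om\setminus\bar B)\cap W^{1,\infty}(\Om\setminus\bar B)$, so $g$ is smooth enough near $\Om\setminus\bar B$ to expect second-order convergence, away from $\pa\Om$ as well thanks to the $H^2$-regularity up to the boundary guaranteed by convexity.

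The key steps, in order: (1) Pick sets $\Om_1\Subset\Om_0\Subset\Om$ and a slightly larger ball so that $\Om\setminus\bar B\subset\Om_1$, $\Om_0\subset\Om\setminus\bar B'$, and $\Om_0$ is smooth — here $g$ solves $(\nabla g,\nabla\phi_h)=0$ for all $\phi_h\in V_h$ supported in $\Om\setminus\bar B'$, i.e.\ $g$ and $g_h$ satisfy the Galerkin orthogonality needed for Proposition~\ref{theorem:compactconv} on $\Om_0$. (2) Apply Proposition~\ref{theorem:compactconv} (or rather the local $L^2$-best-approximation estimates from the same Schatz–Wahlbin circle of ideas, e.g.\ \cite{MR0431753}) to get
\[
  \norm{g-g_h}_{L^2(\Om_1)}\le C\bigl\{\,h^2\abs{\ln h}\,\norm{g}_{W^{2,\infty}(\Om_0)} + \norm{g-g_h}_{L^1(\Om)}\,\bigr\},
\]
the first term coming from local interpolation of the $W^{2,\infty}$-regular $g$ and the second being the global pollution term, controlled in a weak ($L^1$) norm. (3) Bound the interpolation term using $\norm{g}_{W^{2,\infty}(\Om_0)}\le C$ from Lemma~\ref{lemma:lipschitzatedges}, and bound the pollution term by $Ch^2\abs{\ln h}^2$ from Lemma~\ref{lemma:l1convergence}. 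Combining gives the claimed $\norm{g-g_h}_{L^2(\Om\setminus\bar B)}\le Ch^2\abs{\ln h}$ once one checks that the $h^2\abs{\ln h}^2$ from the $L^1$ term is actually absorbed — or, if not, one keeps it and the statement $Ch^2\abs{\ln h}$ must instead be obtained by a sharper local estimate; see below.

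The main obstacle is the pollution term: a naive interior estimate in $L^\infty$ produces a pollution contribution measured in $L^\infty$ or weighted $L^2$ which, for the singular Green's function, is only $O(h^{2-d/2})$ or worse, not $O(h^2\abs{\ln h})$. The trick that makes the proof work is to measure the global error contribution in the weakest possible norm, namely $L^1(\Om)$, because that is exactly the norm in which Lemma~\ref{lemma:l1convergence} delivers (almost) second order. Getting the localized estimate in the form "local $L^2$ error $\lesssim$ local interpolation error plus global $L^1$ error" — rather than the more classical "plus global $L^2$ error" — is the delicate point; it requires the duality/superapproximation argument behind the local estimates of \cite{MR0431753}, applied with a dyadic decomposition of $\Om$ around $x_0$. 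Once that form is in hand, the rest is bookkeeping with the two already-proven lemmas, and one should double-check whether the final log exponent is $1$ (as stated) or whether the $L^1$ term forces $\abs{\ln h}^2$; I would expect the sharper local analysis to contribute only $\abs{\ln h}$ and the $L^1$ pollution to enter at higher order in $h$, so that $Ch^2\abs{\ln h}$ indeed stands.
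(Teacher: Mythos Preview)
Your approach differs substantially from the paper's, and as written it has real gaps.

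The paper does \emph{not} use a Schatz--Wahlbin local $L^2$ estimate with an $L^1$ pollution term. Instead it runs a short duality argument: set $e=g-g_h$, define $w\in H^1_0(\Om)$ and $w_h\in V_h$ by
\[
  (\nabla w,\nabla\phi)=((1-\chi_B)e,\phi),\qquad (\nabla w_h,\nabla\phi_h)=((1-\chi_B)e,\phi_h),
\]
and observe that $\norm{e}_{L^2(\Om\setminus\bar B)}^2=w(x_0)-w_h(x_0)$ by the defining property of $g$ and $g_h$. Since the right-hand side $(1-\chi_B)e$ vanishes on $B$, Lemma~\ref{lemma:pointwisebounds}\,(ii) applies with $\Om_1\Subset B$ and gives $w(x_0)-w_h(x_0)\le Ch^2\lh\,\norm{e}_{L^2(\Om\setminus\bar B)}$, which after division yields the claim with exactly one logarithm. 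No global $L^1$ estimate, no local $L^2$ Schatz--Wahlbin result, and no $W^{2,\infty}$ regularity of $g$ are needed.

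Two concrete problems with your route. First, the geometry: you want $\Om\setminus\bar B\subset\Om_1\Subset\Om_0\Subset\Om$, but $\Om\setminus\bar B$ reaches $\pa\Om$, so no such $\Om_1$ exists, and Proposition~\ref{theorem:compactconv} as stated cannot be invoked. Second, even granting a localized estimate of the form ``local $L^2\lesssim$ local approximation $+$ global $L^1$'', the global $L^1$ term from Lemma~\ref{lemma:l1convergence} is $Ch^2\lh^2$, at the \emph{same} power of $h$ as the approximation term, so it does not ``enter at higher order'' as you hope; you would end with $Ch^2\lh^2$, not $Ch^2\lh$. (A smaller point: Lemma~\ref{lemma:lipschitzatedges} gives only $W^{1,\infty}\cap H^2$ away from $x_0$, not the $W^{2,\infty}$ you invoke.) The duality trick in the paper sidesteps all of this and delivers the sharp single-log bound directly.
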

\begin{proof}
  We proceeded as in the proof of Lemma~\ref{lemma:boundl2} by writing $e=g-g_h$ and introducing the
  indicator function $\chi_B$ of $B$. Further, we define $w \in H^1_0(\Omega)$ as the solution of 
  \begin{equation}\label{proof:convergencewithoutsingularity3}
    (\nabla w, \nabla \phi) = ((1-\chi_B)e, \phi) \quad \forall \phi\in H^1_0(\Omega)
  \end{equation}
  and $w_h \in V_h$ the solution of
  \[
    (\nabla w_h, \nabla \phi_h) = ((1-\chi_B)e, \phi_h) \quad \forall \phi_h \in V_h.
  \]
  Then, the $L^2$ error on $\Omega \setminus \bar B$ can be expressed as
  \[
    \norm{e}^2_{L^2(\Omega \setminus \bar B)} = ((1-\chi_B)e, g) - ((1-\chi_B)e, z_{h,i})
    =(\nabla w,\nabla g) - (\nabla w_h,\nabla g_h) 
    =w(x_0)-w_h(x_0).
  \]
  Since the right-hand side of \eqref{proof:convergencewithoutsingularity3} is zero on $B$,
  Lemma~\ref{lemma:pointwisebounds} implies by choosing a suitable sub domain $B'\Subset B$
  containing $x_0$ that
  \[
    \norm{e}^2_{L^2(\Omega \setminus \bar B)}=w(x_0)-w_h(x_0) \leq \norm{w-w_h}_{L^{\infty}(B')} \leq C
    h^2 \lh \norm{e}_{L^2(\Omega \setminus \bar B)},
  \]
  which concludes the proof.
\end{proof}

\begin{lemma} \label{lemma:boundednessdiscrete}
  For the solution  $g_h\in V_h$ of~\eqref{eq:gh} and any open ball $B \subset \Omega$ containing
  $x_0$ there is $h_0>0$ such that
  \[
    \norm{g_h}_{L^{\infty}(\Omega \setminus \bar B)}\leq C\quad\text{and}
    \quad \norm{\nabla g_h}_{L^2(\Omega \setminus \bar B)}\leq C,
  \]
  for all $h\le h_0$  with a constant $C$ independent of $h$.
\end{lemma}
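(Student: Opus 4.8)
The plan is to deduce both bounds from the interior $L^2$ error estimate of Lemma~\ref{lemma:convergencewithoutsingularity} together with the regularity of $g$ away from $x_0$ supplied by Lemma~\ref{lemma:lipschitzatedges}, and then to pass from $L^2$ to the $L^\infty$ norm and to the gradient by an inverse inequality on the quasi-uniform mesh. First I would fix an open ball $B'$ with $x_0\in B'\Subset B$ and set $d_0=\operatorname{dist}(\overline{B'},\partial B)>0$. For $h<d_0$ every element $K\in\Th$ that meets $\Om\setminus\bar B$ stays inside $\Om\setminus\overline{B'}$, so the element patch $\Om_h=\bigcup\Set{K\in\Th | K\cap(\Om\setminus\bar B)\neq\emptyset}$ satisfies $\Om\setminus\bar B\subset\Om_h\subset\Om\setminus\overline{B'}$. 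On $\Om\setminus\overline{B'}$ Lemma~\ref{lemma:lipschitzatedges} gives $g\in W^{1,\infty}\cap H^2$ with norms bounded independently of $h$; in particular $g$ is continuous there, so the nodal interpolant $i_hg$ is well defined on $\Om_h$ and the standard elementwise estimates give $\norm{g-i_hg}_{L^2(\Om_h)}+h\norm{\nabla(g-i_hg)}_{L^2(\Om_h)}\le Ch^2\norm{g}_{H^2(\Om\setminus\overline{B'})}$ as well as $\norm{i_hg}_{L^\infty(\Om_h)}\le\norm{g}_{L^\infty(\Om\setminus\overline{B'})}\le C$.

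Next I would bound the finite element function $g_h-i_hg$ in $L^2(\Om_h)$. By the triangle inequality $\norm{g_h-i_hg}_{L^2(\Om_h)}\le\norm{g-g_h}_{L^2(\Om_h)}+\norm{g-i_hg}_{L^2(\Om_h)}$, where the first term is controlled by $\norm{g-g_h}_{L^2(\Om\setminus\overline{B'})}\le Ch^2\lh$ from Lemma~\ref{lemma:convergencewithoutsingularity} applied with the ball $B'$ and the second by $Ch^2$; hence $\norm{g_h-i_hg}_{L^2(\Om_h)}\le Ch^2\lh$. Since $g_h-i_hg$ is piecewise linear on $\Om_h$, the quasi-uniform inverse inequalities then yield $\norm{g_h-i_hg}_{L^\infty(\Om_h)}\le Ch^{-d/2}\norm{g_h-i_hg}_{L^2(\Om_h)}\le Ch^{2-d/2}\lh$ and $\norm{\nabla(g_h-i_hg)}_{L^2(\Om_h)}\le Ch^{-1}\norm{g_h-i_hg}_{L^2(\Om_h)}\le Ch\lh$. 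Because $d\in\set{2,3}$ the exponents $2-d/2$ and $1$ are positive, so both quantities are bounded by a constant for $h$ below a suitable threshold.

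Finally, combining the two steps on $\Om\setminus\bar B\subset\Om_h$ gives
\[
  \norm{g_h}_{L^\infty(\Om\setminus\bar B)}\le\norm{i_hg}_{L^\infty(\Om_h)}+\norm{g_h-i_hg}_{L^\infty(\Om_h)}\le C
\]
and, using also $g\in H^2(\Om\setminus\overline{B'})$ for the interpolant,
\[
  \norm{\nabla g_h}_{L^2(\Om\setminus\bar B)}\le\norm{\nabla g}_{L^2(\Om\setminus\overline{B'})}+\norm{\nabla(g-i_hg)}_{L^2(\Om_h)}+\norm{\nabla(g_h-i_hg)}_{L^2(\Om_h)}\le C .
\]
Choosing $h_0$ as the minimum of $d_0$, the threshold from Lemma~\ref{lemma:convergencewithoutsingularity} for the ball $B'$, and $1$ then finishes the argument.

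The hard part will be the bookkeeping near $\partial\Om$ rather than any single estimate: since $\Om\setminus\bar B$ is allowed to touch $\partial\Om$, no purely interior maximum-norm estimate such as Proposition~\ref{theorem:compactconv} can be invoked on it directly. Routing everything through the already boundary-aware $L^2$ estimate of Lemma~\ref{lemma:convergencewithoutsingularity} and an inverse inequality circumvents this, at the cost of checking carefully that the enlarged patch $\Om_h$ still lies inside the region $\Om\setminus\overline{B'}$ on which Lemma~\ref{lemma:lipschitzatedges} delivers the $W^{1,\infty}\cap H^2$ regularity of $g$ needed for the interpolation bounds.
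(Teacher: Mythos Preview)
Your argument is correct and follows essentially the same route as the paper: insert the nodal interpolant $i_hg$, control $\norm{g_h-i_hg}_{L^2}$ via Lemma~\ref{lemma:convergencewithoutsingularity} and the $H^2$ interpolation estimate from Lemma~\ref{lemma:lipschitzatedges}, and pass to $L^\infty$ and $H^1$ by inverse inequalities on the quasi-uniform mesh. Your extra step of introducing $B'\Subset B$ and the element patch $\Om_h\subset\Om\setminus\overline{B'}$ is a welcome bit of rigor that the paper's proof leaves implicit when applying the inverse and interpolation estimates on $\Om\setminus\bar B$.
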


\begin{proof}
  On $\Om\setminus \bar B$, we get by an inverse inequality (cf. \cite[Theorem 3.2.6]{MR1930132}),
  Lemma~\ref{lemma:convergencewithoutsingularity}, and by inserting the nodal interpolant $i_hg$ of
  $g$ that
  \[
    \begin{split}
      \norm{g- g_h}_{L^{\infty}(\Omega \setminus \bar B)}
      &\leq \norm{g_h- i_hg}_{L^{\infty}(\Omega \setminus \bar B)}
      + \norm{i_hg-g}_{L^{\infty}(\Omega \setminus \bar B)}\\
      &\leq Ch\norm{\nabla g}_{L^{\infty}(\Omega \setminus \bar B)} +
      Ch^{-\frac{d}{2}}\norm{i_hg-g_h}_{L^2(\Omega \setminus \bar B)}\\
      &\leq Ch\norm{\nabla g}_{L^{\infty}(\Omega \setminus \bar B)} 
      + Ch^{-\frac{d}{2}}\bigl\{\norm{i_hg-g}_{L^2(\Omega \setminus \bar B)} 
      +\norm{g- g_h}_{L^2(\Omega \setminus \bar B)}\bigr\}\\
      &\leq C h^{2-\frac{d}{2}} \lh.
    \end{split}
  \]
  Here, we used that $g\in W^{1,\infty}(\Omega \setminus \bar B)\cap H^2(\Omega\setminus \bar B)$
  according to Lemma~\ref{lemma:lipschitzatedges}. Hence, we get
  \[
    \norm{g_h}_{L^{\infty}(\Omega \setminus \bar B)}\le \norm{g}_{L^{\infty}(\Omega
    \setminus \bar B)}+\norm{g- g_h}_{L^{\infty}(\Omega \setminus \bar B)},
  \]
  which implies the first assertion again by means of Lemma~\ref{lemma:lipschitzatedges}. 
  For the  second assertion, we similarly obtain
  \[
    \begin{split}
      \norm{\nabla(g- g_h)}_{L^2(\Omega \setminus \bar B)}
      &\le Ch\norm{\nabla^2 g}_{L^2(\Omega \setminus \bar B)} 
      + Ch^{-1}\bigl\{\norm{i_hg-g}_{L^2(\Omega \setminus \bar B)} 
      +\norm{g- g_h}_{L^2(\Omega \setminus \bar B)}\bigr\}\\
      &\leq C h\lh.
    \end{split}
  \]
  Here, we again used for the interpolation estimates that $g\in H^2(\Omega \setminus
  \bar B)$. Then,
  \[
    \norm{\nabla g_h}_{L^2(\Omega \setminus \bar B)}\le \norm{\nabla g}_{L^2(\Omega
    \setminus \bar B)}+\norm{\nabla(g- g_h)}_{L^2(\Omega \setminus \bar B)}
  \]
  together with Lemma~\ref{lemma:lipschitzatedges} implies the second assertion.
\end{proof}

We close this section by a discrete analogue of Lemma~\ref{lemma:cutoffcont} for $d=2$ only.

\begin{lemma} \label{lemma:cutoffeq}
  Let $d=2$ and $g_h\in V_h$ be the solution of~\eqref{eq:gh}.  Then, for every $M>0$ there exists an open
  ball $B\Subset \Om$ containing $x_0$ and $h_0>0$ such that for all $h\le h_0$, it holds
  \[
    g_h(x)\geq M \quad\forall x\in B.
  \]
\end{lemma}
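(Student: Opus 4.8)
The plan is to transfer the pointwise lower bound for the continuous Green's function $g$ from Lemma~\ref{lemma:cutoffcont} to its Ritz projection $g_h$, using the fact that near the singularity $g$ blows up (logarithmically, since $d=2$) while the error $g-g_h$ stays controlled in $L^\infty$ away from the singularity and, crucially, that the \emph{discrete} Green's function also inherits a comparable logarithmic growth near $x_0$. Concretely, given $M>0$, I would first invoke Lemma~\ref{lemma:cutoffcont} to obtain an open ball $B'\subset\Om$ containing $x_0$ with $g(x)\ge 2M$ on $B'$; then I would need to show that $g_h$ is not too far below $g$ inside a slightly smaller ball.

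The key steps, in order: (1) Fix $M>0$ and apply Lemma~\ref{lemma:cutoffcont} to get a ball $\tilde B$ with $g\ge M'$ on $\tilde B$, where $M'$ is chosen large enough (e.g.\ $M'=2M$, or larger to absorb the error below). (2) Choose a ball $B\Subset\tilde B$ with $x_0\in B$. On the annular region $\tilde B\setminus \bar B$ we have $g\in W^{1,\infty}$ by Lemma~\ref{lemma:lipschitzatedges}, and on $\Om\setminus \bar B$ the error estimate from the proof of Lemma~\ref{lemma:boundednessdiscrete} gives $\norm{g-g_h}_{L^\infty(\Om\setminus \bar B)}\le Ch^{2-d/2}\lh = Ch\lh \to 0$ as $h\to 0$ (here $d=2$). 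In particular there is $h_1>0$ so that $g_h\ge M'/2$ on $\tilde B\setminus \bar B$, i.e.\ on the boundary-collar of $B$. (3) Inside $B$, use a discrete maximum principle for the Ritz projection of the Green's function: since $g_h$ solves $(\nabla g_h,\nabla\phi_h)=\phi_h(x_0)$, on any cell not containing $x_0$ the function $g_h$ is discretely harmonic, and one can compare $g_h$ on $\bar B$ with the discrete harmonic function taking the same nodal boundary values on $\partial(B\cap\Om_h)$; together with the growth of $g_h$ at $x_0$ itself this forces $g_h\ge M$ on $B$ for $h$ small. The cleanest route is probably: on $\tilde B$ write $g_h = $ (discretely harmonic part with the nodal boundary data on $\partial\tilde B$) $+$ (Ritz projection of the Green's function on the subdomain $\tilde B$ with zero boundary data); the latter is nonnegative by the discrete maximum principle on acute/weakly-acute meshes or by the known pointwise positivity of the discrete Green's function in 2D near the singularity (this is where $d=2$ enters, cf.\ the logarithmic blow-up in~\eqref{proof:boundednessofprojection2.1} and the discrete estimates of~\cite{MR3614014}), so that $g_h\ge M'/2 - \text{(interpolation error)} \ge M$ on $B$.

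The main obstacle I anticipate is step (3): justifying rigorously that $g_h$ stays large \emph{inside} $B$ rather than only on the collar $\tilde B\setminus \bar B$. The error bound $\norm{g-g_h}_{L^\infty(\Om\setminus\bar B)}\le Ch\lh$ is useless arbitrarily close to $x_0$, so one cannot simply compare $g_h$ with $g$ there. One must instead use a genuinely discrete argument: either (a) a discrete maximum/comparison principle, which on general quasi-uniform meshes is delicate (it is automatic only for Delaunay-type meshes in 2D), or (b) a direct lower bound on the discrete Green's function near its singularity, e.g.\ $g_h(x_0)\ge c\lh$ or a local bound via the sharp pointwise discrete Green's function estimates now available in 2D (the discrete analogue of~\eqref{proof:boundednessofprojection2.1}), combined with a local discrete Harnack-type inequality to spread the bound to a small ball around $x_0$. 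Given the paper's emphasis on the recent results of~\cite{MR3614014} for discrete Green's functions, I expect the authors to appeal to such sharp 2D discrete Green's function bounds, which is precisely why the lemma is restricted to $d=2$ — in 3D the discrete Green's function is bounded (no blow-up that survives discretization uniformly), so the analogue of Lemma~\ref{lemma:cutoffcont} fails for $g_h$.
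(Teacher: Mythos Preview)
Your diagnosis of the obstacle is accurate, and you correctly anticipate that the paper leans on the discrete Green's function estimates of~\cite{MR3614014}. However, the paper's execution differs from both of your proposed routes~(a) and~(b) for step~(3), and neither of those routes is rigorous on general quasi-uniform meshes as stated.

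The paper does not fix a ball $B$ independent of $h$ and then treat its interior via a discrete maximum principle or a Harnack-type inequality. Instead it uses an \emph{$h$-dependent} scale separation: the two cases are $\abs{x-x_0}\ge \kappa h\lh^{1/2}$ and $\abs{x-x_0}\le \kappa h\lh^{1/2}$. In the first case, the sharp pointwise error estimate of Schatz--Wahlbin~\cite[Theorem~6.1]{MR0431753},
\[
  \abs{g(x)-g_h(x)}\le C_\kappa \frac{h^2}{\abs{x-x_0}^2}\ln\frac{\abs{x-x_0}}{h},
\]
is maximized at the inner radius and yields $\abs{g-g_h}\le C\lh^{-1}\ln\lh^{1/2}\to 0$, so $g_h\ge\tfrac12 g$ there and Lemma~\ref{lemma:cutoffcont} finishes. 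Your cruder bound $\norm{g-g_h}_{L^\infty(\Om\setminus\bar B)}\le Ch\lh$ on a \emph{fixed} annulus cannot play this role, since it does not allow the inner radius to shrink with $h$. In the second case, the paper combines two facts from~\cite{MR3614014}: the lower bound $g_h(x_0)\ge C(1+\lh)$ and the gradient bound $\norm{\nabla g_h}_{L^\infty(\Om)}\le Ch^{-1}$. The mean value theorem then gives
\[
  g_h(x)\ge g_h(x_0) - \norm{\nabla g_h}_{L^\infty(\Om)}\abs{x-x_0}\ge C\lh - C\lh^{1/2}\ge M
\]
for $h$ small. This elementary mean-value step replaces the Harnack-type argument you envisaged, and it requires no mesh hypothesis beyond quasi-uniformity; by contrast, a discrete maximum principle (your route~(a)) or a discrete Harnack inequality (your route~(b)) would need additional structural assumptions on the mesh that the paper does not impose.
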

\begin{proof}
  The main idea for the proof stems from the proof of~\cite[Theorem 4.6]{MR3614014}. While
  in~\cite{MR3614014}, smoothness of the domain is required we do not need this for our
  particular result since $x_0$ lies in the interior of $\Omega$ and we are only interested in the
  behavior of $g_h$ in a neighborhood of $x_0$ and not close to the boundary.  Hence, case 3 of the
  proof of \cite[Theorem 4.6]{MR3614014} does not need to be considered here. We adapt the
  technique to our case.

  We distinguish the following two cases:
  \begin{description}
    \item[\boldmath$\abs{x- x_0}\ge \kappa h \abs{\ln h}^{\frac{1}{2}}$:] We show, that for this
      case we have pointwise convergence of $g_h$. This setting fulfills the assumptions
      of~\cite[Theorem 6.1]{MR0431753} which states the existence of constants $\kappa$ and
      $C_\kappa$ such that for $h$ sufficiently small and
      $x,x_0\in\Omega_0 \Subset \Omega$ with $\abs{x-x_0} \geq  \kappa h$, it holds
      \begin{equation}\label{eq:6.1}
        \abs{g(x)-g_h(x)}\leq C_\kappa\frac{h^2}{\abs{x-x_0}^2}\ln\frac{\abs{x-x_0}}{h}.
      \end{equation}

      Abbreviating $\eta= \abs{x-x_0}h^{-1}$, the right-hand side
      of~\eqref{eq:6.1} becomes $C_\kappa \eta^{-2}\ln \eta$. Since $\kappa h \abs{\ln h}^{\frac{1}{2}}\leq \abs{x-x_0}$, we have that
      $\eta\geq \kappa \abs{\ln h}^{\frac{1}{2}}$ which means that for $h$ small enough $C_\kappa \eta^{-2}\ln
      \eta$ is maximal at $\eta=\kappa  \abs{\ln h}^{\frac{1}{2}}$. Combined, we have
      \[
        \abs{g(x)- g_h(x)} \le C_\kappa \eta^{-2}\ln \eta\leq  C_\kappa  \abs{\ln h}^{-1} \ln\abs{\ln
        h}^{\frac{1}{2}}.
      \]
      Hence, for $h\le h_0$ sufficiently small, we obtain
      \[
        g_h(x)=g(x)+(g_h(x)-g(x))\ge \frac12 g(x),
      \]
      which implies the assertion in this case  by Lemma~\ref{lemma:cutoffcont}.
    \item[\boldmath$\abs{x-x_0} \leq \kappa h \abs{\ln h}^{\frac{1}{2}}$:] From~\cite[Lemma
      4.8]{MR3614014}, we know that $g_h(x_0) \geq C(1+\abs{\ln h})$.  Due to~\cite[Lemma
      4.9]{MR3614014}, the first derivative of $g_h$ is bounded by $\norm{\nabla
      g_h}_{L^\infty(\Om)}\leq Ch^{-1}$. Then, by the mean value theorem, it holds
      \[
        g_h(x)\ge g_h(x_0)-\norm{\nabla g_h}_{L^\infty(\Om)}\abs{x-x_0}\ge
        C\lh-C\lh^{\frac12}.
      \]
      That implies $\abs{g_h(x)}\geq M$ for $h\le h_0$ sufficiently small.

  \end{description}
  Combination of these cases yields the assertion with a set $B$ which can be chosen independently
  of $h\le h_0$.
\end{proof}

\begin{remark}
  To our best knowledge, the question is open, if it is possible to prove a similar estimate for the
  discrete Green's function $g_h$ in the three-dimensional case ($d=3$) on general quasi-uniform
  meshes.
\end{remark}

%%%%%%%%%%%%%%%%%%%%%%%%%%%%%%%%%%%%%%%%%%%%%%%%%%%%%%%%%%%%%%
\section{Error Analysis for the Optimal Control Problem}\label{sec:error}
%%%%%%%%%%%%%%%%%%%%%%%%%%%%%%%%%%%%%%%%%%%%%%%%%%%%%%%%%%%%%%

In this section, we derive estimates for the discretization error between the continuous optimal
control $\bar q\in \Qad$ and the discrete optimal control $\bar q_h\in\Qhad$ for the case of
variational control (i.e. $\Qhad=\Qad$) and piecewise constant control discretization (i.e.
$\Qhad=\Qhad^c$). In both cases, we derive error estimates of order $h\lh$, cf. the
Theorems~\ref{theorem:result1} and~\ref{theorem:result2}.

Before doing so, we introduce an additional piece of notation. So far we have only discussed the
parts of the adjoint equation related to the singular behavior. Now we also consider the coefficient
$S\bar q(x_i)-\xi_i$ accompanying each $z_i$. If this coefficient becomes zero, the
singularity at that point vanishes.  For the rest of the paper we define 
\[
  L=\Set{ i\in I | S\bar q(x_i)-\xi_i=0}
\]
to be the set of such indices.

\begin{lemma} \label{lemma:boundednessofprojection}
  Let $\bar q\in \Qad$ be the solution of~\eqref{definition:contproblem_red}. Then, for each $i\in I
  \setminus L$ there is an open ball $B_i\subset \Om$ containing $x_i$ such that, depending on the
  sign of $S\bar q(x_i)-\xi_i$, either $\bar q(x)= a$  or $\bar q(x) =b$ holds for all $x\in B_i$ .
  Moreover, it holds $\bar q\in W^{1,\infty}(\Om)$ with
  \[
    \norm{\nabla \bar q}_{L^\infty(\Om)}\le C.
  \]
\end{lemma}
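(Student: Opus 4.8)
The plan is to exploit the projection formula from Proposition~\ref{projectionformula}, namely $\bar q = P_{[a,b]}(-\tfrac1\alpha \bar z)$, and the representation $\bar z = \sum_{i\in I}(\bar u(x_i)-\xi_i)z_i$ with each $z_i$ solving~\eqref{eq:zi}. The observation is that the Green's function $g$ associated to any interior point (here playing the role of $z_i$) blows up near $x_i$ according to Lemma~\ref{lemma:cutoffcont}: for any $M$ there is a ball where $g \ge M$. I would first isolate the contribution of the singular index $i$: write $\bar z = (\bar u(x_i)-\xi_i)z_i + \sum_{j\neq i}(\bar u(x_j)-\xi_j)z_j$. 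Outside a fixed ball around $x_i$, Lemma~\ref{lemma:lipschitzatedges} applied to each $z_j$ ($j\neq i$) shows the remainder term is bounded in $L^\infty$ near $x_i$; meanwhile, by Lemma~\ref{lemma:cutoffcont} applied to $z_i$, we can pick a ball $B_i\ni x_i$ on which $|z_i|$ is as large as we want. Since $\bar u(x_i)-\xi_i\neq 0$ for $i\in I\setminus L$, the term $(\bar u(x_i)-\xi_i)z_i$ dominates on a suitably small ball $B_i$, so $-\tfrac1\alpha\bar z$ exceeds $b$ (if $\bar u(x_i)-\xi_i$ has the sign making $-\tfrac1\alpha z_i$ blow up to $+\infty$) or falls below $a$ (opposite sign) throughout $B_i$. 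The projection formula then forces $\bar q \equiv b$ or $\bar q \equiv a$ on $B_i$, giving the first assertion.

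For the Lipschitz bound, the idea is a covering argument. On each ball $B_i$, $i\in I\setminus L$, we have just shown $\bar q$ is constant, hence $\nabla\bar q = 0$ there. On the complement $\Om\setminus\bigcup_{i\in I\setminus L} B_i$ — and in fact away from all the points $x_i$, $i\in I$ — I would argue that $\bar z$ is Lipschitz. Indeed for $i\in I\setminus L$ we are inside a ball where $\bar q$ is constant so $\nabla\bar q$ vanishes; for $i\in L$ the coefficient $\bar u(x_i)-\xi_i$ is zero, so that $z_i$ contributes nothing to $\bar z$ at all. Thus on a neighborhood of each $x_j$ we can write $\bar z$ as a finite sum of $z_i$'s that are smooth near $x_j$ (since $z_i$ for $i\neq j$ is, by Lemma~\ref{lemma:lipschitzatedges}, in $W^{1,\infty}$ away from its own singular point $x_i$), plus possibly the singular $z_j$ which either lives inside a ball where $\bar q$ is already constant (case $j\in I\setminus L$) or is simply absent (case $j\in L$). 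Combining: on the region where $\bar q$ is not locally constant, $\bar z = \sum_{i} c_i z_i$ is a finite sum of functions each lying in $W^{1,\infty}$ of that region by Lemma~\ref{lemma:lipschitzatedges}, so $\bar z\in W^{1,\infty}$ there with a uniform bound. Since $P_{[a,b]}(\cdot)$ is $1$-Lipschitz (it is the composition of $\min$ and $\max$ with constants), $\bar q = P_{[a,b]}(-\tfrac1\alpha\bar z)$ inherits the Lipschitz bound $\norm{\nabla\bar q}_{L^\infty(\Om)}\le \tfrac1\alpha\norm{\nabla\bar z}_{L^\infty}\le C$ on that region, and $\nabla\bar q=0$ on the balls $B_i$. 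Patching these (the balls $B_i$ and the complementary region overlap in an open set where $\bar q$ is continuous and the gradient bounds are consistent) yields $\bar q\in W^{1,\infty}(\Om)$ with $\norm{\nabla\bar q}_{L^\infty(\Om)}\le C$; note $\bar q\in H^1(\Om)$ is already known from Proposition~\ref{prop:H1}, which reassures us that no jumps across the patching interfaces occur.

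The main obstacle I anticipate is making the domination argument on $B_i$ fully rigorous and \emph{uniform}: one must choose the ball $B_i$ small enough that the blow-up of $(\bar u(x_i)-\xi_i)z_i$ beats the sum of \emph{all} other terms $\sum_{j\neq i}(\bar u(x_j)-\xi_j)z_j$ \emph{and} beats the threshold $\max(|a|,|b|,1)$ — this requires knowing the other $z_j$ are uniformly bounded on a fixed neighborhood of $x_i$ not containing any $x_j$, $j\neq i$, which is exactly Lemma~\ref{lemma:lipschitzatedges} (choosing a ball around $x_j$ that excludes $x_i$). A secondary subtlety is the sign bookkeeping: $z_i$ for $d=2$ blows up like $+C_1\ln\frac1{|x-x_i|}\to+\infty$ and for $d=3$ like $+C_1|x-x_i|^{-1}\to+\infty$ with $C_1>0$, so $-\tfrac1\alpha(\bar u(x_i)-\xi_i)z_i\to -\infty$ when $\bar u(x_i)-\xi_i>0$, forcing $\bar q=a$, and $\to+\infty$ when $\bar u(x_i)-\xi_i<0$, forcing $\bar q=b$ — this is the "depending on the sign" clause. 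Everything else (the $1$-Lipschitz property of $P_{[a,b]}$, the finiteness of $I$, the finite-sum structure of $\bar z$) is routine.
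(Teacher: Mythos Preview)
Your proposal is correct and follows essentially the same approach as the paper: both arguments use the projection formula together with the representation $\bar z=\sum_i(S\bar q(x_i)-\xi_i)z_i$, invoke Lemma~\ref{lemma:cutoffcont} for the blow-up of $z_i$ and Lemma~\ref{lemma:lipschitzatedges} for the boundedness of the remaining $z_j$ near $x_i$, and then use a covering argument combining the constancy of $\bar q$ on the balls $B_i$ with the Lipschitz regularity of $\bar z$ (hence of $P_{[a,b]}(-\tfrac1\alpha\bar z)$) on the complement, together with $\bar q\in H^1(\Om)$ from Proposition~\ref{prop:H1}. Your write-up is in fact more explicit than the paper's on the sign bookkeeping and on why the indices $i\in L$ cause no trouble for the Lipschitz bound.
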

\begin{proof}
  For the solution $\bar z$ of the adjoint equation~\eqref{theorem:continuousproblem:adjointeq}, it
  holds
  \[
    \bar z=\sum_{i\in I}(S\bar q(x_i)-\xi_i)z_i
  \]
  with $z_i$ being the solution of~\eqref{eq:zi}.  The Lemmas~\ref{lemma:cutoffcont}
  and~\ref{lemma:lipschitzatedges} applied to $z_i$ for any $i\in I\setminus L$ ensure that for such
  $i$ and any $M>0$ there are open balls $B_i$ containing $x_i$ such that 
  \[
    \abs{z_i(x)}\ge M\quad\forall x\in B_i\qquad\text{and}\qquad
    \norm{z_j}_{L^\infty(B_i)}\le C\quad\text{for }j\in I\setminus\set{i}.
  \]
  Hence, by using $S\bar q(x_i)-\xi_i\neq0$, we can choose $B_i$ for $i\in I\setminus L$ such that
  either
  \[
    -\frac1\alpha \bar z=-\frac1\alpha \sum_{j\in I}(S\bar q(x_j)-\xi_j)z_j\le a
    \quad\text{or}\quad
    -\frac1\alpha \bar z=-\frac1\alpha \sum_{j\in I}(S\bar q(x_j)-\xi_j)z_j\ge b
  \]
  holds. Hence, by Proposition~\ref{projectionformula}, we obtain the first assertion.

  For the second assertion, we note that  $\bar q\in H^1(\Om)$ by Proposition~\ref{prop:H1}. Hence,
  to prove $\bar q\in W^{1,\infty}(\Om)$ it is sufficient to ensure $\bar q\in W^{1,\infty}(B_i)$
  for every $i\in I\setminus L$ and $\bar q\in W^{1,\infty}(\Om\setminus \bigcup_{i\in I\setminus
  L}\bar B_i)$. The first result follows directly from the previous discussion.
  Lemma~\ref{lemma:lipschitzatedges} yields $\bar q\in W^{1,\infty}(\Om\setminus \bigcup_{i\in
  I\setminus L}\bar B_i)$, which completes the proof.
\end{proof}

As preparation for deriving the error estimates for $\norm{\bar q-\bar q_h}_{L^2(\Om)}$, we prove
the following three lemmas.

\begin{lemma} \label{lemma:normestimateby2ndderivative}
  For $p,q,q_h \in Q$, it holds
  \[
    \alpha\norm{q-q_h}^2_{L^2(\Om)} \leq j_h''(p)(q-q_h,q-q_h).
  \]
\end{lemma}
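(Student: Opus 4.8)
The statement is an immediate consequence of the explicit formula for the second derivative of the discrete reduced functional obtained in Lemma~\ref{discderivatives}. The plan is therefore to simply substitute into that formula and discard a manifestly nonnegative term. First I would recall that, since $j_h$ is a quadratic functional, its second Fréchet derivative $j_h''(p)(\cdot,\cdot)$ does not in fact depend on the base point $p\in Q$; this is visible directly from the expression
\[
  j_h''(p)(\delta q,\tau q) = \sum_{i \in I} S_h\delta q(x_i)\,S_h\tau q(x_i) + \alpha (\delta q, \tau q),
\]
which is the same for every $p$. (This observation is not strictly needed for the proof, but it clarifies why the particular choice of $p$ in the statement is irrelevant.)

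Next I would apply this identity with the choice $\delta q = \tau q = q - q_h \in Q$, giving
\[
  j_h''(p)(q-q_h,q-q_h) = \sum_{i \in I} \bigl(S_h(q-q_h)(x_i)\bigr)^2 + \alpha\,\norm{q-q_h}^2_{L^2(\Om)}.
\]
Here the point evaluations are well defined because $S_h(q-q_h)\in V_h\subset C(\bar\Om)$, so each term $S_h(q-q_h)(x_i)$ is a finite real number. The first sum is a sum of squares of real numbers and is therefore nonnegative. Dropping it yields the claimed inequality
\[
  \alpha\,\norm{q-q_h}^2_{L^2(\Om)} \leq j_h''(p)(q-q_h,q-q_h).
\]

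There is no genuine obstacle here: the only points worth making explicit are that the point-evaluation contributions have the right sign (they arise as $\sum_i S_h\delta q(x_i)^2\ge 0$) and that the coercivity constant is exactly $\alpha$, coming from the Tikhonov regularization term $\tfrac{\alpha}{2}\norm{q}_{L^2(\Om)}^2$ in the cost functional. This uniform-in-$h$ coercivity of $j_h''$ is what will later allow us to convert first-order optimality information into $L^2$ bounds on $\bar q - \bar q_h$.
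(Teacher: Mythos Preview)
Your proof is correct and follows exactly the approach indicated in the paper, which simply states that the result ``follows directly from Lemma~\ref{discderivatives}.'' You have merely spelled out what ``directly'' means: substitute $\delta q=\tau q=q-q_h$ into the formula for $j_h''$ and drop the nonnegative sum of squares.
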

\begin{proof}
  This follows directly from Lemma~\ref{discderivatives}.
\end{proof}

\begin{lemma} \label{lemma:j-jh}
  For $q,\delta q \in \Qad$, it holds
  \[
    \abs{j'(q)(\delta q) - j'_h(q)(\delta q)}\leq Ch^2\abs{\ln h}^2\norm{\delta q}_{L^\infty(\Om)}
  \]
  with a constant $C$ independent of $h$.
\end{lemma}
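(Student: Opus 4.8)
The plan is to express the difference $j'(q)(\delta q) - j_h'(q)(\delta q)$ purely in terms of the state-equation discretization error at the points $x_i$, and then apply the auxiliary pointwise estimate from Lemma~\ref{lemma:pointwisebounds}. By Lemma~\ref{contderivatives} and Lemma~\ref{discderivatives}, we have $j'(q)(\delta q) = (\alpha q + z, \delta q)$ with $z$ solving~\eqref{eq:z}, and $j_h'(q)(\delta q) = (\alpha q + z_h, \delta q)$ with $z_h$ solving~\eqref{eq:zh}. The $\alpha q$ terms cancel, so
\[
  j'(q)(\delta q) - j_h'(q)(\delta q) = (z - z_h, \delta q).
\]
Using the representations~\eqref{eq:rep_z} and~\eqref{eq:rep_zh}, namely $z = \sum_{i\in I}(Sq(x_i)-\xi_i)z_i$ and $z_h = \sum_{i\in I}(S_hq(x_i)-\xi_i)z_{h,i}$, I would split the difference $z - z_h$ into a term where the coefficient is held fixed and $z_i$ is replaced by $z_{h,i}$, and a term where the coefficient difference $Sq(x_i) - S_hq(x_i)$ appears. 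The first type of term is controlled by the $L^1$-convergence of the Green's function discretization (Lemma~\ref{lemma:l1convergence}), giving $\norm{z_i - z_{h,i}}_{L^1(\Om)} \le Ch^2\abs{\ln h}^2$; paired with $\norm{\delta q}_{L^\infty(\Om)}$ via Hölder this already yields a contribution of the stated order. The coefficients $Sq(x_i) - \xi_i$ are uniformly bounded by~\eqref{ellipticregularity} and Lemma~\ref{lemma:stability}, so they do not cause trouble.

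For the second type of term, the crucial input is a bound on $\abs{Sq(x_i) - S_hq(x_i)}$. This is exactly the setting of Lemma~\ref{lemma:pointwisebounds}: with $w = Sq$ and $w_h = S_h q$ and right-hand side $f = q$, which is bounded (indeed $q\in\Qad$ so $\norm{q}_{L^\infty(\Om)}\le\max(\abs a,\abs b)$), part~(i) gives $\abs{Sq(x_i) - S_hq(x_i)} \le Ch^2\abs{\ln h}^2$ after choosing nested neighborhoods $\Om_1\Subset\Om_0\Subset\Om$ around each $x_i$ on which $q$ is merely in $L^\infty$. This contribution must then be multiplied by $\norm{z_{h,i}}$ in a dual norm against $\norm{\delta q}_{L^\infty(\Om)}$; here I would use that $\norm{z_{h,i}}_{L^1(\Om)}$ is bounded uniformly in $h$ — which follows from $\norm{z_i}_{L^1(\Om)}<\infty$ (since $z_i\in W^{1,s}_0(\Om)\hookrightarrow L^2(\Om)\hookrightarrow L^1(\Om)$) together with the $L^1$-convergence in Lemma~\ref{lemma:l1convergence}. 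Since $\abs{I}=N$ is fixed, summing the finitely many index contributions preserves the order, and collecting all terms gives
\[
  \abs{j'(q)(\delta q) - j_h'(q)(\delta q)} \le Ch^2\abs{\ln h}^2 \norm{\delta q}_{L^\infty(\Om)}.
\]

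The main obstacle I anticipate is the bookkeeping around the pointwise state error $\abs{Sq(x_i)-S_hq(x_i)}$: Lemma~\ref{lemma:pointwisebounds} is stated for a smooth interior subdomain $\Om_0$ with $f\in L^\infty(\Om_0)$, so one must be slightly careful that $q\in\Qad$ genuinely provides $L^\infty$ regularity globally (it does, being bounded between $a$ and $b$), and that the constant in the estimate can be taken uniform over the finitely many points $x_i$ by fixing a single radius $r>0$ with disjoint balls around them. Everything else — the cancellation of the $\alpha q$ term, the telescoping of the sum, and the uniform bounds on coefficients and on $\norm{z_{h,i}}_{L^1}$ — is routine. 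No second-derivative or optimality information is needed here; the lemma is purely a statement about the consistency error of the discretized reduced gradient.
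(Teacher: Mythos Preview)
Your proposal is correct and follows essentially the same route as the paper: the same cancellation of the $\alpha q$ term, the same splitting of $z-z_h$ via the intermediate $\tilde z_h=\sum_i(Sq(x_i)-\xi_i)z_{h,i}$, and the same two key inputs (Lemma~\ref{lemma:l1convergence} for $\norm{z_i-z_{h,i}}_{L^1}$ and Lemma~\ref{lemma:pointwisebounds} for $\abs{Sq(x_i)-S_hq(x_i)}$). The only cosmetic difference is that for the second piece the paper pairs $\norm{\tilde z_h-z_h}_{L^2}$ with $\norm{\delta q}_{L^2}$ using the bound $\norm{z_{h,i}}_{L^2}\le C$ from Lemma~\ref{lemma:boundl2}, whereas you pair $L^1$ with $L^\infty$ and bound $\norm{z_{h,i}}_{L^1}$ via Lemma~\ref{lemma:l1convergence}; both work and yield the stated estimate.
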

\begin{proof}
  By the Lemmas~\ref{contderivatives} and \ref{discderivatives}, we
  arrive at
  \begin{equation}\label{eq:X}
    \abs{j'(q)(\delta q) - j_h'(q)(\delta q)} = \abs{(z - z_h, \delta q )}\le \abs{(z - \tilde z_h, \delta q
    )}+\abs{(\tilde z_h - z_h, \delta q )},
  \end{equation}
  where $z\in W^{1,s}_0(\Om)$ and $z_h\in V_h$ are the solutions of~\eqref{eq:z} and~\eqref{eq:zh},
  respectively and $\tilde z_h\in V_h$ denotes the solution of
  \[
    (\nabla \tilde z_h,\nabla \phi_h)=\sum_{i\in I}(Sq(x_i)-\xi_i)\phi_h(x_i)\quad\forall\phi_h\in V_h.
  \]
  By construction, it holds
  \[
    \tilde z_h=\sum_{i\in I}(Sq(x_i)-\xi_i)z_{h,i}
  \]
  with $z_{h,i}$ given by~\eqref{eq:zih}.

  For the first term on the right-hand side of~\eqref{eq:X}, we get by~\eqref{eq:rep_z} and
  Lemma~\ref{lemma:l1convergence}
  \[
    \begin{split}
      \abs{(z - \tilde z_h, \delta q)}
      &\le \norm{z-\tilde z_h}_{L^1(\Om)}\norm{\delta q}_{L^\infty(\Om)}
      \le \sum_{i\in I}\abs{Sq(x_i)-\xi_i}\norm{z_i-z_{h,i}}_{L^1(\Om)}\norm{\delta q}_{L^\infty(\Om)}\\
      &\le Ch^2\lh^2\bigl\{\norm{Sq}_{L^\infty(\Om)}+\abs{\xi}\bigr\}\norm{\delta q}_{L^\infty(\Om)}\le
      Ch^2\lh^2\bigl\{\norm{q}_{L^2(\Om)}+\abs{\xi}\bigr\}\norm{\delta q}_{L^\infty(\Om)}
    \end{split}
  \]
  with $\abs{\xi}^2=\sum_{i\in I}\xi_i^2$.

  For estimating the second term on the right-hand side of~\eqref{eq:X}, let
  $\Omega_1\Subset\Omega_0\Subset\Omega$ with smooth $\Om_0$ such that $\set{x_i|i\in I}\subset\Om_1$. Then,
  \eqref{eq:rep_zh} and the Lemmas~\ref{lemma:boundl2} and~\ref{lemma:pointwisebounds} yield
  \[
    \begin{split}
      \abs{(\tilde z_h - z_h, \delta q )}
      &\le\norm{\tilde z_h-z_h}_{L^2(\Om)}\norm{\delta q}_{L^2(\Om)}
      \le \sum_{i\in I}\abs{Sq(x_i)-S_hq(x_i)}\norm{z_{h,i}}_{L^2(\Om)}\norm{\delta q}_{L^2(\Om)}\\
      &\le C \norm{Sq-S_hq}_{L^\infty(\Om_1)}\norm{\delta q}_{L^2(\Om)}
      \le C h^2\lh^2\norm{q}_{L^\infty(\Om)}\norm{\delta q}_{L^2(\Om)}.
    \end{split}
  \]
  
  Inserting this back in~\eqref{eq:X} proves the lemma.
\end{proof}

\begin{lemma} \label{lemma:jh-jh}
  Let $q,p,\delta q\in Q$. Then, it holds
  \[
    \abs{j_h'(q)(\delta q) - j_h'(p)(\delta q)}\leq C \norm{q-p }_{L^2(\Omega)} \norm{\delta
    q}_{L^2(\Omega)}.
  \]
\end{lemma}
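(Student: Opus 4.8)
The plan is to reduce the difference of the two reduced gradients to a computation with the discrete adjoint states and then invoke the stability estimate for $S_h$ together with the $h$-uniform $L^2$ bound for the discrete Green's functions $z_{h,i}$. First I would apply Lemma~\ref{discderivatives}: writing $z_h^q, z_h^p \in V_h$ for the solutions of~\eqref{eq:zh} with data $q$ and $p$, respectively, we have
\[
  j_h'(q)(\delta q) - j_h'(p)(\delta q) = \bigl(\alpha(q-p) + (z_h^q - z_h^p),\, \delta q\bigr).
\]
The term $\alpha(q-p)$ is handled immediately by the Cauchy--Schwarz inequality, contributing $\alpha\norm{q-p}_{L^2(\Om)}\norm{\delta q}_{L^2(\Om)}$.

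Next, subtracting the two instances of~\eqref{eq:zh} and using the representation~\eqref{eq:rep_zh}, the difference of the discrete adjoints can be written as
\[
  z_h^q - z_h^p = \sum_{i\in I} \bigl(S_h q(x_i) - S_h p(x_i)\bigr)\, z_{h,i} = \sum_{i\in I} S_h(q-p)(x_i)\, z_{h,i},
\]
with $z_{h,i}$ the discrete Green's function from~\eqref{eq:zih}. Hence, by Cauchy--Schwarz and the triangle inequality,
\[
  \abs{(z_h^q - z_h^p, \delta q)} \le \Bigl(\sum_{i\in I}\abs{S_h(q-p)(x_i)}\,\norm{z_{h,i}}_{L^2(\Om)}\Bigr)\norm{\delta q}_{L^2(\Om)}.
\]
For each $i\in I$, the point value is controlled by Lemma~\ref{lemma:stability}, namely $\abs{S_h(q-p)(x_i)} \le \norm{S_h(q-p)}_{L^\infty(\Om)} \le C\norm{q-p}_{L^2(\Om)}$. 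Since $z_{h,i}$ is precisely the discrete Green's function~\eqref{eq:gh} associated with $x_0=x_i$, Lemma~\ref{lemma:boundl2} yields $\norm{z_{h,i}}_{L^2(\Om)} \le C$ with $C$ independent of $h$. Summing over the finite set $I$ and combining with the contribution of the $\alpha(q-p)$ term gives the assertion.

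There is no substantial obstacle in this argument; the only point worth emphasizing is that the constant is independent of $h$ precisely because of the uniform bound in Lemma~\ref{lemma:boundl2} and the $L^\infty$--$L^2$ stability of $S_h$ from Lemma~\ref{lemma:stability}. Both are already established, so the estimate follows by these elementary manipulations.
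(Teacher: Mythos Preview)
Your argument is correct. The paper's own proof is a minor variant: it observes that, since $j_h$ is quadratic, $j_h'(q)(\delta q)-j_h'(p)(\delta q)=j_h''(\rho)(q-p,\delta q)=\sum_{i\in I}S_h(q-p)(x_i)\,S_h\delta q(x_i)+\alpha(q-p,\delta q)$ and then applies Lemma~\ref{lemma:stability} to \emph{both} factors $S_h(q-p)$ and $S_h\delta q$. Your route instead pairs $z_{h,i}$ with $\delta q$ by Cauchy--Schwarz and invokes Lemma~\ref{lemma:boundl2}; since $(z_{h,i},\delta q)=S_h\delta q(x_i)$, the two computations are dual, and the only practical difference is that the paper needs just Lemma~\ref{lemma:stability}, while you additionally rely on Lemma~\ref{lemma:boundl2}.
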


\begin{proof}
  Due to Lemma~\ref{discderivatives}, it holds by the mean value theorem for any $\rho\in Q$ that
  \[
    j_h'(q)(\delta q) - j_h'(p)(\delta q) = j_h''(\rho)(q-p,\delta q)=\sum_{i\in I}
    S_h(q-p)(x_i)S_h\delta q(x_i) + \alpha (q-p,\delta q).
  \]
  Therefore, using Lemma~\ref{lemma:stability}, we can estimate 
  \[
    \begin{split}
      \abs{j_h'(q)(\delta q) - j_h'(p)(\delta q)}
      &\le C \norm{S_h(q-p)}_{L^\infty(\Om)}\norm{S_h\delta q}_{L^\infty(\Om)} + \alpha
      \abs{(q-p,\delta q)}\\
      &\leq C \norm{q-p}_{L^2(\Om)}\norm{\delta q}_{L^2(\Om)},
    \end{split}
  \]
  which states the estimate.
\end{proof}

%%%%%%%%%%%%%%%%%%%%%%%%%%%%%%%%%%%%%%%%%%%%%%%%%%%%%%%%%%%%%%
\subsection{Variational Discretization}
%%%%%%%%%%%%%%%%%%%%%%%%%%%%%%%%%%%%%%%%%%%%%%%%%%%%%%%%%%%%%%

As first discretization, we consider the variational discretization approach as introduced
by~\cite{MR2122182}, i.e., we choose $\Qhad=\Qad$. Note, that the following error estimate
extends~\cite[Theorem 5.2]{MR3523574} to domains with polygonal or polyhedral boundary and for $d=3$
improves convergence rate by $h^{\frac{1}{2}}$ compared to~\cite[Theorem 5.2]{MR3523574}
and~\cite[Theorem 3.2]{MR3449612}.

\begin{theorem} \label{theorem:result1}
  Let $\bar q\in \Qad$ be the solution of the continuous problem \eqref{definition:contproblem_red}
  and $\bar q_h\in \Qad$ the solution of the corresponding discrete problem
  \eqref{definition:discreteproblem_red} with $\Qhad=\Qad$. Then, it holds
  \[
    \norm{\bar q - \bar q_h}_{L^2(\Om)}\leq C h \lh
  \]
  with a constant $C$ independent of $h$.
\end{theorem}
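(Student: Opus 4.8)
The plan is to exploit the standard second-order-sufficient-condition argument for linear-quadratic control problems, combined with the point-evaluation error estimates established in the previous sections. First I would test the continuous variational inequality \eqref{theorem:continuousproblem:optcondeq} with $\delta q = \bar q_h \in \Qad$ and the discrete variational inequality \eqref{theorem:discreteproblem:optcondeq} with $\delta q_h = \bar q \in \Qad$ (here both discrete and continuous admissible sets coincide, which is exactly why the variational discretization is convenient). Adding the two inequalities gives
\[
  j'(\bar q)(\bar q_h - \bar q) + j_h'(\bar q_h)(\bar q - \bar q_h) \ge 0,
\]
which rearranges to
\[
  j_h'(\bar q_h)(\bar q_h - \bar q) - j_h'(\bar q)(\bar q_h - \bar q) \le j'(\bar q)(\bar q_h - \bar q) - j_h'(\bar q)(\bar q_h - \bar q).
\]
The left-hand side is $j_h''(\rho)(\bar q_h - \bar q, \bar q_h - \bar q)$ for some intermediate $\rho$ by the mean value theorem (using that $j_h$ is quadratic, so in fact $\rho$ is immaterial), and Lemma~\ref{lemma:normestimateby2ndderivative} bounds this below by $\alpha \norm{\bar q - \bar q_h}_{L^2(\Om)}^2$. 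The right-hand side is exactly the consistency term $j'(\bar q)(\bar q_h - \bar q) - j_h'(\bar q)(\bar q_h - \bar q)$ estimated in Lemma~\ref{lemma:j-jh}, which gives a bound $Ch^2\abs{\ln h}^2 \norm{\bar q_h - \bar q}_{L^\infty(\Om)}$.

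The remaining issue is that Lemma~\ref{lemma:j-jh} produces an $L^\infty$-norm of the difference $\bar q_h - \bar q$ on its right-hand side, not an $L^2$-norm, so I cannot simply divide through. Here I would use that both $\bar q$ and $\bar q_h$ are bounded in $L^\infty$ uniformly in $h$: indeed $a \le \bar q, \bar q_h \le b$ pointwise since they lie in $\Qad$, so $\norm{\bar q_h - \bar q}_{L^\infty(\Om)} \le b - a =: C$. Combining, we obtain
\[
  \alpha \norm{\bar q - \bar q_h}_{L^2(\Om)}^2 \le C h^2 \abs{\ln h}^2,
\]
and taking square roots yields $\norm{\bar q - \bar q_h}_{L^2(\Om)} \le C h \abs{\ln h}$, which is the claimed estimate. (In applying Lemma~\ref{lemma:j-jh} one should double-check that the constant there depends only on $\norm{\bar q}_{L^2(\Om)}$, $\abs{\xi}$, and $\norm{\bar q}_{L^\infty(\Om)}$, all of which are $h$-independent data of the problem; since $\bar q \in \Qad$ is fixed, this is automatic.)

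The main obstacle, such as it is, is purely the bookkeeping in getting from the $L^\infty$-weighted consistency bound to an $L^2$-squared bound: one must resist the temptation to look for an $L^\infty$-in-$L^\infty$ or $L^2$-in-$L^2$ version of Lemma~\ref{lemma:j-jh} and instead simply use the crude but sufficient uniform $L^\infty$ bound on admissible controls. All the genuine analytic work — the $W^{2,p}$ regularity of the auxiliary solutions on interior subdomains, the Schatz--Wahlbin localization, and above all the $L^1$ convergence estimate $\norm{g - g_h}_{L^1(\Om)} \le Ch^2\abs{\ln h}^2$ for the Green's function in Lemma~\ref{lemma:l1convergence} — has already been done and is packaged inside Lemma~\ref{lemma:j-jh}. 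So the proof of the theorem itself is short: set up the two variational inequalities, add, apply Lemmas~\ref{lemma:normestimateby2ndderivative} and~\ref{lemma:j-jh}, bound $\norm{\bar q - \bar q_h}_{L^\infty(\Om)} \le b-a$, and divide.
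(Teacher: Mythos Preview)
Your proof is correct and follows essentially the same approach as the paper: test the two variational inequalities against each other's solutions, use Lemma~\ref{lemma:normestimateby2ndderivative} to bound the $L^2$ error from below by the second derivative term, apply Lemma~\ref{lemma:j-jh} for the consistency error, and finish with the crude bound $\norm{\bar q-\bar q_h}_{L^\infty(\Om)}\le b-a$. The only cosmetic difference is that the paper writes the second-derivative identity first and then invokes the optimality conditions, whereas you add the two variational inequalities and then identify the left-hand side as the Hessian term; the resulting inequality is the same.
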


\begin{proof}
  Using Lemma~\ref{lemma:normestimateby2ndderivative}, we have by the optimality
  conditions~\eqref{theorem:continuousproblem:optcondeq}
  and~\eqref{theorem:discreteproblem:optcondeq} that
  \[
    \begin{split}
      \alpha \norm{\bar q- \bar q_h}^2_{L^2(\Omega)}
      &\leq j_h''(p)(\bar q-\bar q_h,\bar q-\bar q_h) = j_h'(\bar q)(\bar q - \bar q_h)- j'_h(\bar q_h) (\bar q -\bar q_h )\\
      &\leq j_h'(\bar q)( \bar q - \bar q_h) - j'(\bar q)(\bar q - \bar q_h)
    \end{split}
  \]
  for arbitrary $p\in Q$. Since $\bar q,\bar q_h\in\Qad$ are bounded, Lemma~\ref{lemma:j-jh} implies
  \[
    \alpha \norm{ \bar q-\bar q_h }^2_{L^2(\Omega)}
    \leq  j_h'(\bar q)( \bar q - \bar q_h) - j'(\bar q)(\bar q - \bar q_h) 
    \leq Ch^2\abs{\ln h}^2 \norm{\bar q - \bar q_h}_{L^\infty(\Om)}
    \le Ch^2\abs{\ln h}^2,
  \]
  which yields the result.
\end{proof}

%%%%%%%%%%%%%%%%%%%%%%%%%%%%%%%%%%%%%%%%%%%%%%%%%%%%%%%%%%%%%%
\subsection{Cellwise Constant Control Discretization}
%%%%%%%%%%%%%%%%%%%%%%%%%%%%%%%%%%%%%%%%%%%%%%%%%%%%%%%%%%%%%%

We now consider the fully discretized case where the discrete state and adjoint are approximated by
functions in $V_h$ and the discrete control is searched for in $\Qhad=\Qhad^c$.  Note, that the
following error estimate improves for $d=3$ the---to our knowledge---best known error estimate
from~\cite[Theorem 4.3]{AnOtSa} by $h^{\frac{1}{2}}$.

\begin{theorem} \label{theorem:result2}
  Let $\bar q\in \Qad$ be the solution of the continuous problem \eqref{definition:contproblem_red}
  and $\bar q_h\in \Qhad$ the corresponding solution of the discrete problem
  \eqref{definition:discreteproblem_red} with $\Qhad=\Qhad^c$. Then, it holds
  \[
    \norm{\bar q - \bar q_h}_{L^2(\Om)}\leq C h \abs{\ln h}
  \]
  with a constant $C$ independent of $h$.
\end{theorem}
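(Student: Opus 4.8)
The plan is to mirror the proof of Theorem~\ref{theorem:result1} for the variational case, but now account for the extra error introduced by discretizing the control in $Q_h^c$. Starting from Lemma~\ref{lemma:normestimateby2ndderivative} with $p$ arbitrary, I would write
\[
  \alpha\norm{\bar q-\bar q_h}^2_{L^2(\Om)}\le j_h''(p)(\bar q-\bar q_h,\bar q-\bar q_h)
  = j_h'(\bar q)(\bar q-\bar q_h)-j_h'(\bar q_h)(\bar q-\bar q_h).
\]
Unlike the variational case, $\bar q-\bar q_h$ is no longer an admissible test function in the continuous variational inequality~\eqref{theorem:continuousproblem:optcondeq}, so I would introduce the $L^2$ projection $\pi_h\bar q\in\Qhad^c$ and split the term $j_h'(\bar q_h)(\bar q-\bar q_h)$ using $\bar q-\bar q_h=(\bar q-\pi_h\bar q)+(\pi_h\bar q-\bar q_h)$. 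The admissible piece $\pi_h\bar q-\bar q_h\in\Qhad^c$ can be tested in~\eqref{theorem:discreteproblem:optcondeq}, and likewise $\pi_h\bar q-\bar q$ (after a sign flip, using $\pi_h\bar q$ and $\bar q$ both admissible) can be fed into the continuous inequality; this is the standard bookkeeping for cellwise constant discretizations.

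After the dust settles, the right-hand side splits into three kinds of terms: (a) the consistency error $j_h'(\bar q)(\bar q-\bar q_h)-j'(\bar q)(\bar q-\bar q_h)$, controlled by Lemma~\ref{lemma:j-jh} as $Ch^2\lh^2\norm{\bar q-\bar q_h}_{L^\infty(\Om)}\le Ch^2\lh^2$ since $\bar q,\bar q_h$ are uniformly bounded in $\Qad$; (b) a term of the form $(\alpha\bar q+\bar z,\pi_h\bar q-\bar q)$ or $(\alpha\bar q_h+\bar z_h,\pi_h\bar q-\bar q)$ (or their analogues via $j'$, $j_h'$), where one exploits that $(\bar q-\pi_h\bar q)$ is $L^2$-orthogonal to piecewise constants to rewrite, e.g., $(\alpha\bar q+\bar z,\bar q-\pi_h\bar q)=(\alpha\bar q+\bar z-\pi_h(\alpha\bar q+\bar z),\bar q-\pi_h\bar q)$ and then estimate by $Ch\norm{\nabla(\alpha\bar q+\bar z)}_{\cdots}\cdot h\norm{\nabla\bar q}_{\cdots}$; (c) Lipschitz-type remainders handled by Lemma~\ref{lemma:jh-jh} with a Young's inequality absorption into the left-hand side.

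The main obstacle is term (b): the adjoint state $\bar z$ has only $W^{1,s}$ regularity with $s<d/(d-1)$, so one cannot naively bound $\norm{\bar z-\pi_h\bar z}_{L^2}$ by $Ch\norm{\nabla\bar z}_{L^2}$ globally. The remedy is to use the structure established in Lemma~\ref{lemma:boundednessofprojection}: away from the singular points $\{x_i\}_{i\in I\setminus L}$, both $\bar q$ and $\bar z$ are smooth (indeed $\bar q\in W^{1,\infty}(\Om)$ globally, and $\bar z\in W^{1,\infty}\cap H^2$ on $\Om\setminus\bigcup_i\bar B_i$ by Lemma~\ref{lemma:lipschitzatedges}), while inside each ball $B_i$ the optimal control is \emph{constant} ($\bar q\equiv a$ or $\bar q\equiv b$), so $\bar q-\pi_h\bar q$ vanishes on all cells interior to $B_i$ and the contribution from the thin boundary layer of cells meeting $\partial B_i$ is $O(h^{3/2})$ in $L^2$ (in $d=2$: $O(h)$ measure of the layer times $O(h)$ pointwise, or handled via $\norm{\bar q-\pi_h\bar q}_{L^2(B_i)}\le Ch\norm{\nabla\bar q}_{L^2(B_i)}\le Ch$). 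Collecting the global-smooth-part bound $Ch\cdot h$, the layer bound $o(h\lh)$, and the consistency bound $Ch^2\lh^2$, dividing by $\alpha$ and taking square roots gives $\norm{\bar q-\bar q_h}_{L^2(\Om)}\le Ch\lh$, which is the claimed estimate.
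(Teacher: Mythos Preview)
Your outline is essentially correct and would lead to the claimed bound, but your treatment of term~(b) takes a detour that the paper avoids. The paper's decomposition is structurally close to yours (it applies Lemma~\ref{lemma:normestimateby2ndderivative} to $\pi_h\bar q-\bar q_h$ rather than to $\bar q-\bar q_h$, then adds the separate bound~\eqref{eq:piL2}, but this is a cosmetic difference). The substantive divergence is in how the projection term $j'(\bar q)(\bar q-\pi_h\bar q)=(\alpha\bar q+\bar z,\bar q-\pi_h\bar q)$ is estimated after the orthogonality trick. You correctly flag that $\bar z\notin H^1(\Om)$ globally and therefore propose splitting the domain into the balls $B_i$ (where $\bar q$ is constant and the integrand vanishes cell-by-cell), the smooth exterior, and a thin layer. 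This works, but it forces you to invoke Lemmas~\ref{lemma:lipschitzatedges} and~\ref{lemma:boundednessdiscrete} and to track layer contributions.

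The paper's route is shorter: it simply pairs the two factors in $L^1$--$L^\infty$ rather than $L^2$--$L^2$, writing
\[
  \abs{(\alpha\bar q+\bar z-\pi_h(\alpha\bar q+\bar z),\bar q-\pi_h\bar q)}
  \le \norm{\alpha\bar q+\bar z-\pi_h(\alpha\bar q+\bar z)}_{L^1(\Om)}\norm{\bar q-\pi_h\bar q}_{L^\infty(\Om)}.
\]
The first factor is $\le Ch\norm{\nabla(\alpha\bar q+\bar z)}_{L^1(\Om)}$, which is finite \emph{globally} because $\bar z\in W^{1,s}(\Om)\hookrightarrow W^{1,1}(\Om)$ for any $s<\tfrac{d}{d-1}$; the second is $\le Ch\norm{\nabla\bar q}_{L^\infty(\Om)}$, finite by Lemma~\ref{lemma:boundednessofprojection}. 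This gives $Ch^2$ for term~(b) in one line, with no domain splitting and no appeal to the discrete Green's-function bounds. The rest of the paper's proof matches your terms~(a) and~(c).
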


\begin{proof}
  We split the error as
  \[
    \norm{\bar q - \bar q_h}_{L^2(\Om)}\leq \norm{\bar q - \pi_h\bar q}_{L^2(\Om)}+ \norm{\pi_h\bar
    q - \bar q_h}_{L^2(\Om)},
  \]
  where $\pi_h$ denotes the $L^2$ projection on $Q_h^c$ given for $v\in L^1(\Om)$ by 
  \begin{equation}\label{eq:pih}
    (\pi_h v)\bigr\rvert_{K} = \frac{1}{\abs{K}}\int_{K}v\, dx \qquad \forall K\in \Th.
  \end{equation}
  Standard estimates yield by Lemma~\ref{lemma:boundednessofprojection} that 
  \begin{equation}\label{eq:piL2}
    \norm{\bar q - \pi_h \bar q}_{L^2(\Om)} \leq Ch\norm{\nabla \bar q}_{L^2(\Om)} \le Ch.
  \end{equation}

  To bound $\norm{\pi_h\bar q - \bar q_h}_{L^2(\Om)}$, note that $\pi_h\bar q\in \Qhad^c$. We derive
  by means of Lemma~\ref{lemma:normestimateby2ndderivative}, as well as the optimality
  conditions~\eqref{theorem:continuousproblem:optcondeq}
  and~\eqref{theorem:discreteproblem:optcondeq} that
  \[
    \begin{split}
      \alpha\norm{\pi_h \bar q - \bar q_h}^2_{L^2(\Om)}
      &\leq j''_h(p)(\pi_h \bar q -\bar  q_h,\pi_h \bar q - \bar q_h) = j'_h(\pi_h\bar q)(\pi_h \bar q - \bar q_h) - j'_h(\bar q_h)(\pi_h \bar q -\bar q_h)\\
      &\leq j'_h(\pi_h\bar q)(\pi_h \bar q - \bar q_h) - j'(\bar q)(\bar q - \bar q_h).
    \end{split}
  \]
  By further splitting, we obtain
  \begin{equation}\label{proof:result2eq4}
    \begin{split}
      \alpha\norm{\pi_h \bar q - \bar q_h}^2_{L^2(\Om)}
      &\leq j'_h(\pi_h\bar q)(\pi_h \bar q - \bar q_h) - j'(\bar q)(\pi_h \bar q -  \bar q_h) - j'(\bar q)(\bar q - \pi_h \bar q) \\
      &= \bigl\{j'_h(\pi_h\bar q)(\pi_h \bar q - \bar q_h) - j'_h(\bar q)(\pi_h \bar q - \bar q_h\bigr\} \\
      &\quad + \bigl\{j'_h(\bar q)(\pi_h \bar q - \bar q_h) -  j'(\bar q)(\pi_h \bar q - \bar q_h)\bigr\} - j'(\bar q)(\bar q -  \pi_h \bar q).
    \end{split}
  \end{equation}
  For the first term in \eqref{proof:result2eq4} we apply Lemma~\ref{lemma:jh-jh}
  and~\eqref{eq:piL2} to end up with
  \[
    \begin{split}
      \abs{j'_h(\pi_h\bar q)(\pi_h \bar q - \bar q_h) - j'_h(\bar q)(\pi_h \bar q - \bar q_h)} 
      &\leq C \norm{\pi_h \bar q -\bar q}_{L^2(\Om)}\norm{\pi_h \bar q - \bar q_h}_{L^2(\Om)}\\
      &\le Ch^2+\frac\alpha2 \norm{\pi_h \bar q - \bar q_h}_{L^2(\Om)}^2.
    \end{split}
  \]
  The second difference in~\eqref{proof:result2eq4} can be dealt with as in
  Theorem~\ref{theorem:result1}. We apply Lemma~\ref{lemma:j-jh} leading by $\norm{\pi_h\bar
  q}_{L^\infty(\Om)}\le \norm{\bar q}_{L^\infty(\Om)}$ to
  \[
    \begin{split}
      \abs{j'_h(\bar q)(\pi_h \bar q - \bar q_h) -  j'(\bar q)(\pi_h \bar q - \bar q_h)}
      &\leq C h^2 \abs{\ln h}^2\norm{\pi_h\bar q - \bar q_h}_{L^\infty(\Om)}\\
      &\leq C h^2 \abs{\ln h}^2\bigl\{\norm{\bar q}_{L^\infty(\Om)}+\norm{\bar q_h}_{L^\infty(\Om)}\bigr\}
      \le Ch^2\abs{\ln h}^2.
    \end{split}
  \]
  By the $L^2$ orthogonality of the projection $\pi_h$, the last term in~\eqref{proof:result2eq4} amounts to
  \[
    \begin{split}
      \abs{j'(\bar q)(\bar q - \pi_h \bar q)} 
      &= \abs{(\alpha \bar q + \bar z, \bar q - \pi_h \bar q)} = \abs{((\alpha \bar q + \bar z) -\pi_h (\alpha \bar q + \bar z), \bar q - \pi_h \bar q)} \\
      &\leq \norm{(\alpha \bar q + \bar z) -\pi_h (\alpha \bar q + \bar z)}_{L^1(\Om)} \norm{\bar q - \pi_h \bar q}_{L^\infty(\Om)}.
    \end{split}
  \]
  Since $\bar z$ is in $W^{1,1}(\Omega)$ (cf.~\cite[Theorem 4]{MR861100}), we can use standard
  estimates for the $L^2$ projection in $L^1(\Om)$ and $L^\infty(\Om)$ to get
  \[
    \abs{j'(\bar q)(\bar q - \pi_h \bar q)}
    \leq Ch^2 \norm{\nabla (\alpha \bar q+\bar z)}_{L^1(\Om)}\norm{\nabla \bar q}_{L^\infty(\Om)} 
    \leq C h^2 \bigl\{\norm{\nabla \bar z}_{L^1(\Om)}^2+\norm{\nabla \bar
    q}^2_{L^{\infty}(\Omega)}\bigr\}\le Ch^2
  \]
  by~\eqref{eq:reg_g} applied to $\bar z$ and Lemma~\ref{lemma:boundednessofprojection}.
  Collecting the estimated for the right-hand side of~\eqref{proof:result2eq4} and absorbing the
  term $\frac\alpha2 \norm{\pi_h \bar q - \bar q_h}_{L^2(\Om)}^2$ to the left-hand side, yields
  \[
    \alpha\norm{\pi_h \bar q - \bar q_h}^2_{L^2(\Om)}\le Ch^2\abs{\ln h}^2,
  \]
  which implies the assertion together with~\eqref{eq:piL2}.
\end{proof}

%%%%%%%%%%%%%%%%%%%%%%%%%%%%%%%%%%%%%%%%%%%%%%%%%%%%%%%%%%%%%%
\section{Improved Error Analysis for the Optimal Control Problem in two Dimensions}\label{sec:imp2}
%%%%%%%%%%%%%%%%%%%%%%%%%%%%%%%%%%%%%%%%%%%%%%%%%%%%%%%%%%%%%%

In this section, we improve the error estimates derived in Section~\ref{sec:error} using a more
detailed analysis of the behavior of the discrete optimal control in two space dimensions. Moreover,
when discretizing the controls by cellwise constants (i.e. for $\Qhad=\Qhad^c$), we employ a post
processing strategy to overcome the limitations to first order convergence in this case.  For
variational discretization and for cellwise constant control discretization with post processing, we
derive error estimates of order $h^2\lh^2$, cf. the Theorems~\ref{theorem:result1.1}
and~\ref{theorem:result5}.

Throughout this section, the analysis is restricted to $d=2$. We start by proving a result for the
discrete optimal control $\bar q_h$ similar to Lemma~\ref{lemma:boundednessofprojection}. This is
possible due to the Lemmas~\ref{lemma:boundednessdiscrete} and~\ref{lemma:cutoffeq}.

\begin{lemma} \label{lemma:boundednessofprojectiondiscrete}
  Let $d=2$. Further, let $\bar q_h\in \Qhad$ be the solution
  of~\eqref{definition:discreteproblem_red}. Then, for each $i\in I \setminus L$ there is an open
  ball $B_i\subset\Om$ containing $x_i$ such that, depending on the sign of $S_h\bar q(x_i)-\xi_i$,
  either $\bar q_h(x)= a$  or $\bar q_h(x) =b$ holds for all $x\in B_i$ and $h\le h_0$.
\end{lemma}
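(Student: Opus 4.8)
The plan is to transport the proof of Lemma~\ref{lemma:boundednessofprojection} to the discrete level, using the discrete Green's function estimates of Section~\ref{sec:green} in place of the continuous ones. By Remark~\ref{rem:P} we may write $\bar q_h=P_{[a,b]}(-\tfrac1\alpha\Pi_h\bar z_h)$, where $\Pi_h$ is the identity for $\Qhad=\Qad$ and $\Pi_h=\pi_h$ for $\Qhad=\Qhad^c$; I would carry out the cellwise constant case in detail, the variational one being the same but simpler. By~\eqref{eq:rep_zh} applied with $q=\bar q_h$ (so $\bar u_h=S_h\bar q_h$),
\[
  \bar z_h=\sum_{j\in I}\bigl(S_h\bar q_h(x_j)-\xi_j\bigr)z_{h,j}
\]
with $z_{h,j}\in V_h$ the discrete Green's function of~\eqref{eq:zih}. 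It then suffices to produce, for each fixed $i\in I\setminus L$, a fixed ball $B_i\ni x_i$ and an $h_0>0$ such that $\sigma_i\,\pi_h\bar z_h>\alpha\max(\abs a,\abs b)$ on $B_i$ for all $h\le h_0$, where $\sigma_i=\sgn(S\bar q(x_i)-\xi_i)$: the projection formula then forces $\bar q_h\equiv a$ on $B_i$ when $\sigma_i=+1$ (since $-\tfrac1\alpha\pi_h\bar z_h<-\max(\abs a,\abs b)\le a$) and $\bar q_h\equiv b$ on $B_i$ when $\sigma_i=-1$.

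First I would pin down the coefficients uniformly in $h$. Since $\bar q_h\in\Qad$ is bounded pointwise, $\norm{\bar q_h}_{L^2(\Om)}\le C$, so Lemma~\ref{lemma:stability} gives $\abs{S_h\bar q_h(x_j)-\xi_j}\le C$ for all $j\in I$ and all $h$. For $i\in I\setminus L$ we have $S\bar q(x_i)-\xi_i\neq0$, and splitting $S_h\bar q_h(x_i)-S\bar q(x_i)=S_h(\bar q_h-\bar q)(x_i)+(S_h-S)\bar q(x_i)$, Lemma~\ref{lemma:stability} bounds the first summand by $C\norm{\bar q_h-\bar q}_{L^2(\Om)}$ and the $L^\infty$ estimate from the proof of Lemma~\ref{lemma:stability} bounds the second by $Ch^{2-\frac d2}\norm{\bar q}_{L^2(\Om)}$. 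By Theorem~\ref{theorem:result2} both go to zero, so there are $h_0>0$ and $c_0>0$ with $\abs{S_h\bar q_h(x_i)-\xi_i}\ge c_0$ and $\sgn(S_h\bar q_h(x_i)-\xi_i)=\sigma_i$ for $h\le h_0$; the same computation gives $\sgn(S_h\bar q(x_i)-\xi_i)=\sigma_i$ as well, in agreement with the formulation of the lemma.

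Next I would separate the singular contribution. For $j\neq i$, Lemma~\ref{lemma:boundednessdiscrete} applied to $z_{h,j}$ with a small ball around $x_j$ excluding $x_i$ yields a fixed neighborhood $U_i$ of $x_i$ on which $\abs{z_{h,j}}\le C$ for all $h\le h_0$. For $j=i$, Lemma~\ref{lemma:cutoffeq} --- the only step genuinely tied to $d=2$ --- provides, for any prescribed $M>0$, a ball $\tilde B_i\ni x_i$ and $h_0>0$ with $z_{h,i}\ge M$ on $\tilde B_i$ for $h\le h_0$. Shrinking $\tilde B_i$ so that $\tilde B_i\subset U_i$ and $\tilde B_i$ is disjoint from the balls around the $x_j$, $j\neq i$, on $\tilde B_i$ we obtain
\[
  \sigma_i\bar z_h=\abs{S_h\bar q_h(x_i)-\xi_i}\,z_{h,i}+\sigma_i\sum_{j\neq i}\bigl(S_h\bar q_h(x_j)-\xi_j\bigr)z_{h,j}\ge c_0M-CN,
\]
so choosing $M$ large enough (depending only on $c_0$, $C$, $N$, $\alpha$, $a$, $b$) gives $\sigma_i\bar z_h>\alpha\max(\abs a,\abs b)$ throughout $\tilde B_i$ for all $h\le h_0$.

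Finally I would pass from $\bar z_h$ to $\pi_h\bar z_h$ and conclude. Choose $B_i\Subset\tilde B_i$ with positive distance to $\partial\tilde B_i$; then for all $h\le h_0$ small enough every cell $K\in\Th$ meeting $B_i$ has diameter $\le h$, hence lies in $\tilde B_i$, so the value of $\pi_h\bar z_h$ on $K$ --- the average $\frac1{\abs K}\int_K\bar z_h\,dx$ --- has sign $\sigma_i$ and magnitude exceeding $\alpha\max(\abs a,\abs b)$. Thus $\sigma_i\pi_h\bar z_h>\alpha\max(\abs a,\abs b)$ on $B_i$, with $B_i$ and $h_0$ independent of $h$, which is the claim; the variational case is the same argument with $\pi_h$ replaced by the identity. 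I expect the main obstacle to be the uniform-in-$h$ lower bound $\abs{S_h\bar q_h(x_i)-\xi_i}\ge c_0$, which genuinely requires the a priori convergence of Theorem~\ref{theorem:result2}; the delicate behavior of the discrete Green's function near the singularity is already packaged in Lemma~\ref{lemma:cutoffeq}, and only mild care with cells straddling $\partial B_i$ is needed when invoking $\pi_h$.
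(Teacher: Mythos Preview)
Your proposal is correct and follows essentially the same route as the paper: show via Theorem~\ref{theorem:result2} that the discrete coefficient $S_h\bar q_h(x_i)-\xi_i$ stays bounded away from zero with the correct sign, then combine Lemma~\ref{lemma:cutoffeq} for the blow-up of $z_{h,i}$ with Lemma~\ref{lemma:boundednessdiscrete} for the boundedness of the remaining $z_{h,j}$, and conclude via the projection formula of Remark~\ref{rem:P}. The only cosmetic differences are that the paper splits $S_h\bar q_h(x_i)-S\bar q(x_i)$ slightly differently (via $S\bar q_h$ rather than $S_h\bar q$) and, in the cellwise constant case, invokes the centroid identity $(\pi_h\bar z_h)\rvert_K=\bar z_h(S_K)$ directly instead of your shrinking-ball averaging argument.
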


\begin{proof}
  We first show that for $i\in I\setminus L$ the difference $\abs{S_h \bar q_h
  (x_i) - \xi_i}$ is bounded away from zero provided that $h$ is sufficiently small. For $i\in
  I\setminus L$, it holds
  \[
    \begin{split}
      0
      &<\abs{S\bar q (x_i) - \xi_i}\\
      &\leq \abs{S_h\bar q_h(x_i)-S\bar q_h (x_i)}+\abs{S(\bar q_h - \bar q)(x_i)}+\abs{S_h\bar q_h(x_i)-\xi_i}\\
      &\leq Ch^2 \abs{\ln h} \norm{\bar q_h}_{L^\infty(\Om)}+C\norm{\bar q - \bar
    q_h}_{L^2(\Om)}+\abs{S_h\bar q_h(x_i)-\xi_i}\\
     & \le Ch\lh +\abs{S_h\bar q_h(x_i)-\xi_i}.
    \end{split}
  \]
  Here we used Lemma~\ref{lemma:pointwisebounds},~\eqref{ellipticregularity}, and
  Theorem~\ref{theorem:result2}.  Thereby, we
  conclude that there is $h_0>0$ such that $S_h\bar q_h(x_i)-\xi_i\neq0$ for $i\in I\setminus L$ and $h<h_0$.

  For the solution $\bar z_h\in V_h$ of the discrete adjoint
  equation~\eqref{theorem:discreteproblem:adjointeq}, it holds
  \[
    \bar z_h=\sum_{i\in I}(S_h\bar q_h(x_j)-\xi_j)z_{h,i}
  \]
  with the solutions $z_{h,i}\in V_h$ of~\eqref{eq:zih}.  Lemma~\ref{lemma:cutoffeq} in combination
  with Lemma~\ref{lemma:boundednessdiscrete} applied to $z_{h,i}$ for any $i\in I\setminus L$
  ensures (by possibly reducing $h_0$) that for such $i$ and any $M>0$ there are open balls $B_i$
  containing $x_i$ such that
  \[
    \abs{z_{h,i}(x)}\ge M\quad\forall x\in B_i\qquad\text{and}\qquad
    \norm{z_{h,j}}_{L^\infty(B_i)}\le C\quad\text{for }j\in I\setminus\set{i}
  \]
  for all $h<h_0$. Hence, by using $S_h\bar q_h(x_i)-\xi_i\neq0$, we obtain on $B_i$ for $i\in
  I\setminus L$ that either
  \[
    -\frac1\alpha \bar z_h=-\frac1\alpha \sum_{j\in I}(S_h\bar q_h(x_j)-\xi_j)z_{h,j} \le a
    \quad\text{or}\quad
    -\frac1\alpha \bar z_h=-\frac1\alpha \sum_{j\in I}(S_h\bar q_h(x_j)-\xi_j)z_{h,j}\ge b
  \]
  holds. In the case of variational discretization, i.e., for $\Qhad=\Qad$, the assertion follows
  immediately from Remark~\ref{rem:P} as in Lemma~\ref{lemma:boundednessofprojection}. For cellwise
  discrete control discretization where $\Qhad=\Qhad^c$, let $\pi_h$ the $L^2$ projection on $Q_h^c$
  as given by~\eqref{eq:pih}.  Then, again by Remark~\ref{rem:P}, it holds for $K\in\Th$ that
  \[
    \bar q_h\bigr\rvert_K=P_{[a,b]}\left(-\frac1\alpha (\pi_h\bar z_h)\bigr\rvert_K\right)
    =P_{[a,b]}\left(-\frac1\alpha \bar z_h(S_K)\right),
  \]
  where $S_K$ denotes the centroid of the cell $K$, cf. Section~\ref{sec:post_p}. This implies the
  assertion also in this case.
 \end{proof}

\begin{lemma} \label{lemma:equalityqqh}
  Let $d=2$, $\bar q\in \Qad$ be the solution of \eqref{definition:contproblem_red}, and $\bar
  q_h\in \Qhad$ the solution of \eqref{definition:discreteproblem_red}. Then, there exist $h_0>0$
  and open balls $B_i\subset \Omega$ containing $x_i$ for $i\in I\setminus L$, such that for all
  $h\le h_0$, it holds either
  \[
    \bar q(x) = \bar q_h(x)=a \qquad \text{or}\qquad \bar q(x) = \bar q_h(x)=b  \qquad \forall x\in B_i.
  \]
\end{lemma}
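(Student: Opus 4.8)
The plan is to combine Lemma~\ref{lemma:boundednessofprojection} (which produces, for each $i\in I\setminus L$, a ball on which $\bar q$ is identically $a$ or $b$) with Lemma~\ref{lemma:boundednessofprojectiondiscrete} (which does the same for $\bar q_h$ for $h\le h_0$), and to check that the two balls can be taken to coincide and that the sign (i.e.\ whether the active bound is $a$ or $b$) is the same on both. First I would fix $i\in I\setminus L$ and recall from the proof of Lemma~\ref{lemma:boundednessofprojectiondiscrete} that $S_h\bar q_h(x_i)-\xi_i\neq 0$ for $h\le h_0$ and, more importantly, that $\sgn(S_h\bar q_h(x_i)-\xi_i)=\sgn(S\bar q(x_i)-\xi_i)$: indeed the chain of estimates there shows $\abs{S\bar q(x_i)-\xi_i}\le Ch\lh+\abs{S_h\bar q_h(x_i)-\xi_i}$ and symmetrically, so for $h$ small the two quantities are close and in particular have the same sign, since the continuous quantity is a fixed nonzero number.

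Next I would observe that the sign of $S\bar q(x_i)-\xi_i$ is exactly what decides, in Lemma~\ref{lemma:boundednessofprojection}, whether $\bar q\equiv a$ or $\bar q\equiv b$ near $x_i$ (through the sign of the dominant term $-\tfrac1\alpha(S\bar q(x_i)-\xi_i)z_i$ in $-\tfrac1\alpha\bar z$), and likewise the sign of $S_h\bar q_h(x_i)-\xi_i$ is what decides in Lemma~\ref{lemma:boundednessofprojectiondiscrete} whether $\bar q_h\equiv a$ or $\bar q_h\equiv b$ near $x_i$. Since these signs agree, the \emph{same} bound is active for $\bar q$ and for $\bar q_h$ near $x_i$. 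It then remains to choose a single ball $B_i$ that works for both. Taking $M$ large enough in each lemma, one gets balls $B_i^{\mathrm{cont}}$ and $B_i^{\mathrm{disc}}(h)$; setting $B_i\Subset B_i^{\mathrm{cont}}\cap B_i^{\mathrm{disc}}(h_0)$ and invoking the fact (already used in Lemma~\ref{lemma:boundednessofprojectiondiscrete}, via Lemmas~\ref{lemma:boundednessdiscrete} and~\ref{lemma:cutoffeq}) that the discrete ball can be chosen independently of $h\le h_0$, one obtains a fixed $B_i$ on which both controls equal the common active bound for all $h\le h_0$. Repeating over the finitely many $i\in I\setminus L$ and shrinking $h_0$ finitely many times completes the argument.

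The only genuine obstacle is the sign-matching step: one must be sure that the $\mathcal{O}(h\lh)$ gap between $S_h\bar q_h(x_i)$ and $S\bar q(x_i)$ cannot flip the sign, which is immediate here precisely because $i\notin L$ makes $S\bar q(x_i)-\xi_i$ a fixed positive distance from $0$, so for $h$ below a threshold depending on that distance the signs must coincide. Everything else — extracting balls, using $h$-independence of the discrete ball, and taking a finite intersection over $i$ and a finite minimum over the thresholds $h_0$ — is bookkeeping already carried out in the two preceding lemmas.
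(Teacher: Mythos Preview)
Your proposal is correct and follows exactly the paper's approach: the paper's proof is the single sentence ``direct consequence of the Lemmas~\ref{lemma:boundednessofprojection} and~\ref{lemma:boundednessofprojectiondiscrete}'', and you have spelled out precisely the details (sign-matching of $S\bar q(x_i)-\xi_i$ and $S_h\bar q_h(x_i)-\xi_i$, intersection of the balls, finite minimum over $h_0$) that make this ``direct consequence'' work. No differences in method.
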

\begin{proof}
  The assertion is a direct consequence of the Lemmas~\ref{lemma:boundednessofprojection}
  and~\ref{lemma:boundednessofprojectiondiscrete}.
\end{proof}

\begin{lemma} \label{lemma:neu}
  Let $d=2$ and $B_i\subset\Om$ open balls with $x_i\in B_i$ for $i\in I\setminus L$. For $q \in
  \Qad$, let $z$  be the solution of~\eqref{eq:z} and $\tilde z_h\in V_h$ be given as the
  solution of
  \begin{equation}\label{eq:tilde_zh}
    (\nabla \tilde z_h,\nabla \phi_h)=\sum_{i\in I}(Sq(x_i)-\xi_i)\phi_h(x_i)\quad\forall\phi_h\in V_h.
  \end{equation}
  Then, it holds for $B=\bigcup_{i\in I\setminus L}B_i$ that
  \[
    \norm{z-\tilde z_h}_{L^2(\Om\setminus \bar B)} \leq Ch^2\lh^2,
  \]
  with a constant $C$ independent of $h$.
\end{lemma}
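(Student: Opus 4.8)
The plan is to reduce the claimed estimate to the error of a single (discrete) Green's function, for which Lemma~\ref{lemma:convergencewithoutsingularity} is already available. By linearity the solution $\tilde z_h$ of~\eqref{eq:tilde_zh} admits the representation $\tilde z_h=\sum_{i\in I}(Sq(x_i)-\xi_i)z_{h,i}$ with $z_{h,i}$ from~\eqref{eq:zih}, which together with~\eqref{eq:rep_z} yields
\[
  z-\tilde z_h=\sum_{i\in I}\bigl(Sq(x_i)-\xi_i\bigr)\,(z_i-z_{h,i}).
\]
Since $q\in\Qad$ is bounded, \eqref{ellipticregularity} gives $\abs{Sq(x_i)-\xi_i}\le\norm{Sq}_{L^\infty(\Om)}+\abs{\xi_i}\le C$, so it is enough to bound $\norm{z_i-z_{h,i}}_{L^2(\Om\setminus\bar B)}$ for each $i\in I$, and I would treat the indices in $I\setminus L$ and in $L$ separately.

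For $i\in I\setminus L$ we have $x_i\in B_i$, hence $\bar B_i\subseteq\bar B$ and $\Om\setminus\bar B\subseteq\Om\setminus\bar B_i$, so $\Om\setminus\bar B$ stays away from the singularity of $z_i$; Lemma~\ref{lemma:convergencewithoutsingularity} applied to the point $x_i$ and the ball $B_i$ then gives, for $h\le h_0$,
\[
  \norm{z_i-z_{h,i}}_{L^2(\Om\setminus\bar B)}\le\norm{z_i-z_{h,i}}_{L^2(\Om\setminus\bar B_i)}\le Ch^2\lh\le Ch^2\lh^2 .
\]
For $i\in L$ the point $x_i$ need not be separated from $\Om\setminus\bar B$, so the singularity of $z_i$ is genuinely present. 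The key observation is that, by the definition of $L$, one has $S\bar q(x_i)=\xi_i$, so the coefficient multiplying $z_i-z_{h,i}$ equals $S(q-\bar q)(x_i)$, which by~\eqref{ellipticregularity} is bounded by $C\norm{q-\bar q}_{L^2(\Om)}$. For $q=\bar q$ — the case in which the lemma is actually applied in the variational-discretization analysis, cf.\ Theorem~\ref{theorem:result1.1} — this coefficient vanishes and the $i\in L$ terms disappear; for any $q$ with $\norm{q-\bar q}_{L^2(\Om)}=\mathcal{O}(h)$ it is of order $h$, and, multiplied by $\norm{z_i-z_{h,i}}_{L^2(\Om)}\le Ch$ (obtained by the duality together with the suboptimal $L^\infty$ estimate, exactly as in the proof of Lemma~\ref{lemma:boundl2}), the contribution is again $\le Ch^2\lh^2$. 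Summing over $i\in I$ and collecting the two parts completes the argument.

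I expect the treatment of the indices $i\in L$ to be the main obstacle: in contrast to $i\in I\setminus L$, no active-set mechanism as in Lemma~\ref{lemma:boundednessofprojection} forces $z$ and $\tilde z_h$ to be regular near such $x_i$, and on a fixed excluded region $\Om\setminus\bar B$ a point singularity caps the attainable $L^2$-rate at $\mathcal{O}(h)$, so one really has to exploit that the residual $S\bar q(x_i)-\xi_i$ vanishes. An equivalent, more uniform route would be an Aubin--Nitsche duality for the full difference $z-\tilde z_h$ on $\Om\setminus\bar B$ with right-hand side $\chi_{\Om\setminus\bar B}(z-\tilde z_h)$, writing $\norm{z-\tilde z_h}_{L^2(\Om\setminus\bar B)}^2=\sum_{i\in I}(Sq(x_i)-\xi_i)(w-w_h)(x_i)$ and estimating $(w-w_h)(x_i)$ by Lemma~\ref{lemma:pointwisebounds},~(ii) for $i\in I\setminus L$ (where the right-hand side vanishes near $x_i$) and by the suboptimal pointwise estimate for $i\in L$; but this closes to order $h^2\lh^2$ only after invoking the same smallness of the $i\in L$ coefficients.
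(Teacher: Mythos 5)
Your proof is correct and takes essentially the same route as the paper: the indices $i\in I\setminus L$ are handled by Lemma~\ref{lemma:convergencewithoutsingularity} exactly as there, and for $i\in L$ the paper likewise relies on the coefficient $Sq(x_i)-\xi_i$ vanishing --- it simply asserts this for all $q\in\Qad$, which, as you rightly observe, presupposes $q=\bar q$ (the only case in which the lemma is subsequently used). The only other difference is cosmetic: the paper's displayed splitting pairs $z_i$ with $(S_hq(x_i)-\xi_i)z_{h,i}$ and therefore carries extra terms $(Sq(x_i)-S_hq(x_i))z_{h,i}$, which it bounds by $Ch^2\lh^2$ via Lemmas~\ref{lemma:pointwisebounds} and~\ref{lemma:boundl2}; with $\tilde z_h$ as defined in~\eqref{eq:tilde_zh} these terms do not arise, so your decomposition is the one that actually matches the statement.
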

\begin{proof}
  Let $z_i$ be defined by~\eqref{eq:zi} and $z_{h,i}$ be defined by~\eqref{eq:zih}. We begin with
  the splitting
  \begin{multline}\label{eq:sum}
    \norm{ z-\tilde z_h}_{L^2(\Om\setminus \bar B)}
    \le \sum_{i \in L}\norm{(S q (x_i) -\xi_i)z_i - (S_h q (x_i)
    -\xi_i)z_{h,i}}_{L^2(\Om\setminus \bar B)}\\
    +\sum_{i \in I\setminus L}\norm{(S q (x_i) -\xi_i)z_i - (S_h q (x_i)
    -\xi_i)z_{h,i}}_{L^2(\Om\setminus \bar B)}.
  \end{multline}

  To discuss the first term of~\eqref{eq:sum}, we note that for $i\in L$ there holds $S q(x_i)
  -\xi_i=0$.  Therefore, we have for such $i$ that
  \[
    (S q (x_i) -\xi_i)z_i - (S_h q (x_i) -\xi_i)z_{h,i}
    =(S_h q (x_i) -\xi_i)z_{h,i}
    =(S_h q (x_i) -S q(x_i))z_{h,i}.
  \]
  Then, we get by the Lemmas~\ref{lemma:pointwisebounds} and~\ref{lemma:boundl2} that 
  \begin{equation}\label{eq:2}
    \begin{split}
      \norm{(S q (x_i) -\xi_i)z_i - (S_h q (x_i)-\xi_i)z_{h,i}}_{L^2(\Om\setminus \bar B)}
      &=\norm{(S_h q (x_i) -S q(x_i))z_{h,i}}_{L^2(\Om\setminus \bar B)}\\
      &\le\abs{S_h q(x_i) -S q(x_i)}\norm{z_{h,i}}_{L^2(\Om)}\\
      &\leq Ch^2 \abs{\ln h}^2\norm{q}_{L^\infty(\Om)}.
    \end{split}
  \end{equation}

  For the second term of~\eqref{eq:sum}, we split
  \begin{equation}\label{eq:1}
    \begin{split}
      (S q (x_i) -\xi_i)z_i - (S_h q (x_i) -\xi_i)z_{h,i}=(S q
      (x_i)-\xi_i)(z_i-z_{h,i})+(S q (x_i)-S_h q (x_i))z_{h,i}
    \end{split}
  \end{equation}
  and obtain for the first term by~\eqref{ellipticregularity} and
  Lemma~\ref{lemma:convergencewithoutsingularity} applied to $z_i$ and $z_{h,i}$ separately on $B_i$
  that
  \[
    \begin{split}
      \norm{(S q (x_i)-\xi_i)(z_i-z_{h,i})}_{L^2(\Om\setminus \bar B)}
      &\le\bigl\{\norm{q}_{L^2(\Om)}+\abs{\xi}\bigr\}\norm{z_i-z_{h,i}}_{L^2(\Om\setminus \bar B)}\\
      &\le Ch^2\lh \bigl\{\norm{q}_{L^2(\Om)}+\abs{\xi}\bigr\}.
    \end{split}
  \]
  For the second term of the right-hand side of~\eqref{eq:1}, we can proceed as for~\eqref{eq:2} to
  obtain
  \[
    \norm{(S q (x_i)-S_h q (x_i))z_{h,i}}_{L^2(\Om\setminus \bar B)}
    \le Ch^2\lh^2\norm{q}_{L^\infty(\Om)}.
  \]
  Collecting the terms, we get for the second term on the right-hand side of~\eqref{eq:sum}
  \[
    \norm{(S\bar q (x_i) -\xi_i)z_i - (S_h\bar q (x_i) -\xi_i)z_{h,i}}_{L^2(\Om\setminus \bar B)}\le
    Ch^2\lh^2\bigl\{\norm{q}_{L^\infty(\Om)}+\abs{\xi}\bigr\}.
  \]
  By adding up the two estimates for~\eqref{eq:sum}, we arrive at the result.
\end{proof}

Based on the previous lemma, we can improve Lemma~\ref{lemma:j-jh} for the case $d=2$. The
improvement consists in the fact that the $L^\infty$ norm of $\delta q$ can be replaced by its $L^2$
norm provided that $\delta q$ vanishes in the neighborhood of the points $x_i$ for $i\in I\setminus
L$.

\begin{lemma} \label{lemma:j-jh_improved}
  Let $d=2$ and $B_i\subset\Om$ open balls with $x_i\in B_i$ for $i\in I\setminus L$. For
  $B=\bigcup_{i\in I\setminus L}B_i$ and  $q, \delta q \in \Qad$ with $\delta q\bigr\rvert_B = 0$, it holds
  \[
    \abs{j'(q)(\delta q) - j'_h(q)(\delta q)} \leq
    Ch^2\lh^2\norm{\delta q}_{L^2(\Om)}.
  \]
  with a constant $C$ independent of $h$.
\end{lemma}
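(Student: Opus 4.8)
The plan is to follow the same structure as the proof of Lemma~\ref{lemma:j-jh}, but to exploit the hypothesis $\delta q\bigr\rvert_B=0$ together with the refined estimate of Lemma~\ref{lemma:neu} in order to replace the $L^\infty$-norm of $\delta q$ by its $L^2$-norm. As in~\eqref{eq:X}, I would start from
\[
  \abs{j'(q)(\delta q) - j_h'(q)(\delta q)} = \abs{(z - z_h, \delta q )},
\]
where $z\in W^{1,s}_0(\Om)$ solves~\eqref{eq:z} and $z_h\in V_h$ solves~\eqref{eq:zh}, and introduce the intermediate discrete function $\tilde z_h\in V_h$ from~\eqref{eq:tilde_zh} (equivalently, the $\tilde z_h$ appearing in the proof of Lemma~\ref{lemma:j-jh}). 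This gives the splitting
\[
  \abs{(z - z_h, \delta q )}\le \abs{(z - \tilde z_h, \delta q )}+\abs{(\tilde z_h - z_h, \delta q )}.
\]
Because $\delta q$ is supported in $\Om\setminus\bar B$, both inner products only see the behavior of $z-\tilde z_h$ and $\tilde z_h-z_h$ away from the singular points $x_i$, $i\in I\setminus L$.

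For the first term, I would use $\delta q\bigr\rvert_B=0$ to write $(z-\tilde z_h,\delta q)=(z-\tilde z_h,\delta q)_{L^2(\Om\setminus\bar B)}$, apply Cauchy--Schwarz, and invoke Lemma~\ref{lemma:neu} directly:
\[
  \abs{(z - \tilde z_h, \delta q )}\le \norm{z-\tilde z_h}_{L^2(\Om\setminus\bar B)}\norm{\delta q}_{L^2(\Om)}\le Ch^2\lh^2\norm{\delta q}_{L^2(\Om)}.
\]
This is exactly the step where the neighborhood-removal pays off: in the unrestricted Lemma~\ref{lemma:j-jh} one could only bound $z-\tilde z_h$ in $L^1(\Om)$ (paying an $L^\infty$-norm on $\delta q$), whereas here the singular contributions are excised so an $L^2$-bound with the optimal $h^2\lh^2$ rate is available.

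For the second term, $\tilde z_h-z_h\in V_h$, and using~\eqref{eq:rep_zh} together with the analogous representation of $\tilde z_h$ one has $\tilde z_h-z_h=\sum_{i\in I}(Sq(x_i)-S_hq(x_i))z_{h,i}$. This term has no singularity issue at all: as in the second half of the proof of Lemma~\ref{lemma:j-jh}, I would fix $\Omega_1\Subset\Omega_0\Subset\Omega$ smooth with $\set{x_i}\subset\Omega_1$, bound $\abs{Sq(x_i)-S_hq(x_i)}\le\norm{Sq-S_hq}_{L^\infty(\Om_1)}\le Ch^2\lh^2\norm{q}_{L^\infty(\Om)}$ via Lemma~\ref{lemma:pointwisebounds}, and $\norm{z_{h,i}}_{L^2(\Om)}\le C$ via Lemma~\ref{lemma:boundl2}, so that $\abs{(\tilde z_h-z_h,\delta q)}\le\norm{\tilde z_h-z_h}_{L^2(\Om)}\norm{\delta q}_{L^2(\Om)}\le Ch^2\lh^2\norm{q}_{L^\infty(\Om)}\norm{\delta q}_{L^2(\Om)}$. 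Combining the two bounds gives the claim. The only real subtlety — and the step I would be most careful about — is making sure that in the first term the balls $B_i$ used here are the same (or contained in) those appearing in Lemma~\ref{lemma:neu}; since that lemma holds for arbitrary such balls this causes no difficulty, but it is worth stating explicitly so the support condition $\delta q\bigr\rvert_B=0$ matches the set over which Lemma~\ref{lemma:neu} controls $z-\tilde z_h$.
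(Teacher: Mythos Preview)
Your proposal is correct and follows essentially the same approach as the paper: split via the intermediate $\tilde z_h$, treat $(\tilde z_h-z_h,\delta q)$ exactly as in Lemma~\ref{lemma:j-jh}, and handle $(z-\tilde z_h,\delta q)$ by restricting to $\Om\setminus\bar B$ (thanks to $\delta q\bigr\rvert_B=0$) and invoking Lemma~\ref{lemma:neu}. Your remark about the balls is fine, since Lemma~\ref{lemma:neu} is stated for arbitrary balls $B_i\ni x_i$.
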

\begin{proof}
  The proof follows the lines of Lemma~\ref{lemma:j-jh} obtaining
  \[
    \abs{j'(q)(\delta q) - j_h'(q)(\delta q)} \le \abs{(z-\tilde z_h, \delta q )} + \abs{(\tilde z_h
    - z_h,\delta q)}.
  \]
  with $\tilde z_h$ solving~\eqref{eq:tilde_zh}. The second term is treated as in
  Lemma~\ref{lemma:j-jh}.  Using Lemma~\ref{lemma:neu} for the first term then implies the result by
  \[
    \abs{(z-\tilde z_h, \delta q )}\le \norm{z-\tilde z_h}_{L^2(\Om\setminus\bar B)}\norm{\delta
    q}_{L^2(\Om)}
  \]
  since $\delta q\bigr\rvert_B=0$.
\end{proof}

%%%%%%%%%%%%%%%%%%%%%%%%%%%%%%%%%%%%%%%%%%%%%%%%%%%%%%%%%%%%%%
\subsection{Variational Discretization}
%%%%%%%%%%%%%%%%%%%%%%%%%%%%%%%%%%%%%%%%%%%%%%%%%%%%%%%%%%%%%%

The following result improves Theorem~\ref{theorem:result1} in the case $d=2$. Since, the result
relies on Lemma~\ref{lemma:cutoffeq} which is available for $d=2$ only, an extension to $d=3$ is not
directly possible.

\begin{theorem} \label{theorem:result1.1}
  Let $d=2$, $\bar q\in \Qad$ be the solution of the continuous problem
  \eqref{definition:contproblem_red}, and $\bar q_h\in \Qad$ be the solution of the corresponding
  discrete problem \eqref{definition:discreteproblem_red} with $\Qhad=\Qad$. Then, it holds
  \[
    \norm{\bar q - \bar q_h}_{L^2(\Om)}\leq C h^2 \abs{\ln h}^2
  \]
  with a constant $C$ independent of $h$.
\end{theorem}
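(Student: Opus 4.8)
The plan is to mimic the structure of the proof of Theorem~\ref{theorem:result2}, but now exploiting the improved Lemmas~\ref{lemma:equalityqqh} and~\ref{lemma:j-jh_improved} that are available only for $d=2$. Starting from Lemma~\ref{lemma:normestimateby2ndderivative} and the two variational inequalities~\eqref{theorem:continuousproblem:optcondeq} and~\eqref{theorem:discreteproblem:optcondeq} (here $\Qhad=\Qad$, so $\bar q$ and $\bar q_h$ are admissible test functions for each other), one obtains
\[
  \alpha\norm{\bar q-\bar q_h}^2_{L^2(\Om)}\le j_h''(p)(\bar q-\bar q_h,\bar q-\bar q_h)
  = j_h'(\bar q)(\bar q-\bar q_h)-j_h'(\bar q_h)(\bar q-\bar q_h)
  \le j_h'(\bar q)(\bar q-\bar q_h)-j'(\bar q)(\bar q-\bar q_h).
\]
Thus it suffices to estimate $\abs{(z_h-z)(\bar q-\bar q_h)} = \abs{j_h'(\bar q)(\delta q)-j'(\bar q)(\delta q)}$ with $\delta q=\bar q-\bar q_h$. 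The key new ingredient is that, by Lemma~\ref{lemma:equalityqqh}, there are balls $B_i$ around $x_i$ for $i\in I\setminus L$ on which $\bar q$ and $\bar q_h$ agree (both equal $a$ or both equal $b$), so with $B=\bigcup_{i\in I\setminus L}B_i$ we have $\delta q\bigr\rvert_B=0$.

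Next I would split $\delta q = \delta q$ (it already vanishes on $B$) and simply apply Lemma~\ref{lemma:j-jh_improved} directly, which gives
\[
  \abs{j_h'(\bar q)(\delta q)-j'(\bar q)(\delta q)}\le Ch^2\lh^2\norm{\delta q}_{L^2(\Om)}
  = Ch^2\lh^2\norm{\bar q-\bar q_h}_{L^2(\Om)}.
\]
Combining with the coercivity estimate yields $\alpha\norm{\bar q-\bar q_h}^2_{L^2(\Om)}\le Ch^2\lh^2\norm{\bar q-\bar q_h}_{L^2(\Om)}$, and dividing through by $\norm{\bar q-\bar q_h}_{L^2(\Om)}$ (the estimate being trivial otherwise) gives the claimed $\mathcal{O}(h^2\lh^2)$ bound. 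The whole argument is short once the machinery is in place.

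The main obstacle is entirely in the preparatory lemmas rather than in the final assembly: one must be sure that the balls $B_i$ furnished by Lemma~\ref{lemma:boundednessofprojection} (for $\bar q$) and by Lemma~\ref{lemma:boundednessofprojectiondiscrete} (for $\bar q_h$) can be taken to be the \emph{same} balls for all sufficiently small $h$, which is exactly the content of Lemma~\ref{lemma:equalityqqh} and ultimately rests on the uniform-in-$h$ lower bound for the discrete Green's function from Lemma~\ref{lemma:cutoffeq}. This is why the result is restricted to $d=2$: Lemma~\ref{lemma:cutoffeq} is only proven in two dimensions. A secondary, purely technical point is to double-check that applying Lemma~\ref{lemma:j-jh_improved} is legitimate, i.e.\ that $\bar q,\bar q_h\in\Qad$ and $\delta q=\bar q-\bar q_h\in\Qad$ in the sense required there (it is, since $\Qad$ is convex and symmetric differences of its elements are bounded), and that the constant indeed does not depend on $h$ because the radii of the $B_i$ are fixed for $h\le h_0$.
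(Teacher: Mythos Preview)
Your proposal is correct and follows essentially the same route as the paper: use Lemma~\ref{lemma:equalityqqh} to get $\delta q=\bar q-\bar q_h$ vanishing on $B$, then apply Lemma~\ref{lemma:j-jh_improved} in place of Lemma~\ref{lemma:j-jh} in the argument of Theorem~\ref{theorem:result1}, and divide by $\norm{\bar q-\bar q_h}_{L^2(\Om)}$. Your caveat about $\delta q\in\Qad$ is well spotted but harmless: inspection of the proof of Lemma~\ref{lemma:j-jh_improved} shows that only $q\in\Qad$ and $\delta q\in L^2(\Om)$ with $\delta q\rvert_B=0$ are actually used, so the hypothesis as stated is slightly stronger than needed and the paper itself applies the lemma in the same way.
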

\begin{proof}
  The assertion is proven as Theorem~\ref{theorem:result1}: Let $B=\bigcup_{i\in I\setminus L} B_i$ for the sets
  $B_i$ given by Lemma~\ref{lemma:equalityqqh}. On $B$, it holds $\delta q=\bar q-\bar q_h=0$.
  Hence, by using Lemma~\ref{lemma:j-jh_improved}, we conclude
  \[
    \alpha \norm{ \bar q-\bar q_h }^2_{L^2(\Omega)}\leq \abs{j'(q)(\delta q) - j'_h(q)(\delta q)}
    \le
    C h^2\abs{\ln h}^2\bigl\{\norm{\bar q}_{L^\infty(\Om)}+\abs{\xi}\bigr\}\norm{\bar q - \bar q_h}_{L^2(\Om)}.
  \]
  Dividing by $\norm{\bar q - \bar q_h}_{L^2(\Om)}$ proves the theorem.
\end{proof}

%%%%%%%%%%%%%%%%%%%%%%%%%%%%%%%%%%%%%%%%%%%%%%%%%%%%%%%%%%%%%%
\subsection{Cellwise Constant Control Discretization with Post Processing}\label{sec:post_p}
%%%%%%%%%%%%%%%%%%%%%%%%%%%%%%%%%%%%%%%%%%%%%%%%%%%%%%%%%%%%%%

We adapt the proof technique from \cite{MR2114385}. We split up the mesh into subsets with respect
to the regularity of $\bar q$.  For a given mesh $\Th$, we define three subsets of cells
\[
  \begin{gathered}
\Th^1 = \Set{ K\in \Th | \bar q\bigr\rvert_{K} = a \text{ or } \bar q\bigr\rvert_{K}=b },\quad
    \Th^2 = \Set{ K\in \Th | a< \bar q \bigr\rvert _{K} <b },\\
    \text{and}\quad\Th^3 = \Th \setminus (\Th^1 \cup \Th^2).
  \end{gathered}
\]
Then, $\Th^3$ denotes the set of cells where $\bar q$ is only Lipschitz continuous in contrast to
$\Th^1\cup \Th^2$ which consists of cells $K$ where $\bar q$ is in $H^2(K)$. 
As in~\cite{MR2114385}, we make the following assumption:
\begin{assumption}\label{ass:1}
  We assume the existence of a constant $C$ independent of $h$, such that for all $h$ sufficiently
  small, it holds
  \[
    \sum_{K\in \Th^3}\abs{K} \leq Ch.
  \]
\end{assumption}
Similar assumptions have been made in the case of cellwise linear discretization or a postprocessing
approach in, e.g.,~\cite{MR2192592,MR2114385}.

\begin{remark}
  By~\eqref{proof:boundednessofprojection2.1} applied to $\bar z$, it seems likely that the active set
  $\set{x\in\Om| \bar q(x)=a\vee \bar q(x)=b}$ has rectifiable boundary. This would imply that the
  boundary is a curve of finite length, i.e., it can be covered by a subset $\Th^3\subset\Th$
  fulfilling Assumption~\ref{ass:1}.
\end{remark}

We denote by $S_K$ the centroid of a cell $K\in \Th$. For $w\in C(\Omega)$, we define the projection
$R_hw\in Q_h^c$ by
\begin{equation}\label{eq:Rh}
R_hw\bigr\rvert_{K}=w(S_K) \quad \forall K \in \Th.
\end{equation}
Further, we set
\begin{equation}\label{eq:rh}
  r_h = R_h\bar q,
\end{equation}
which is well-defined by Lemma~\ref{lemma:boundednessofprojection}, and note that by construction,
it holds $r_h\in\Qhad^c$.

\begin{prop} \label{lemma:integrationresult}
  Let $f\in H^2(K)$ for a given cell $K\in\Th$. Then, it holds
  \[
    \left\lvert\int_K(f(x)-f(S_K))\,dx\right\rvert\leq C h^2
    \abs{K}^{\frac12}\norm{\nabla^2f}_{L^2(K)}.
  \]
\end{prop}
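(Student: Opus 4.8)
The plan is a standard scaling-to-the-reference-cell argument combined with the Bramble--Hilbert lemma, the decisive structural observation being that the centroid $S_K$ annihilates affine functions in the functional under consideration.

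First I would record that the linear functional $f\mapsto L_K(f):=\int_K\bigl(f(x)-f(S_K)\bigr)\,dx$ vanishes on $\PP_1(K)$: for $p\in\PP_1(K)$ one has $p(x)-p(S_K)=\nabla p\cdot(x-S_K)$ with $\nabla p$ constant, so $L_K(p)=\nabla p\cdot\int_K(x-S_K)\,dx=0$ by the defining property $S_K=\frac1{\abs K}\int_K x\,dx$ of the centroid. Since $d\in\{2,3\}$, the Sobolev embedding $H^2(K)\hookrightarrow C(\bar K)$ makes point evaluation bounded, hence $L_K$ is a bounded linear functional on $H^2(K)$.

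Next I would pass to a fixed reference simplex $\hat K$ via the affine bijection $F_K(\hat x)=B_K\hat x+b_K$ with $F_K(\hat K)=K$, for which shape-regularity gives $\norm{B_K}\le Ch_K\le Ch$ and $\abs{\det B_K}=\abs K/\abs{\hat K}$. Writing $\hat f=f\circ F_K\in H^2(\hat K)$ and using that affine maps carry centroids to centroids (so $F_K^{-1}(S_K)$ is the centroid $\hat S$ of $\hat K$), the change of variables yields $L_K(f)=\abs{\det B_K}\,\hat L(\hat f)$, where $\hat L(\hat g)=\int_{\hat K}(\hat g(\hat x)-\hat g(\hat S))\,d\hat x$ is, by the first step, bounded on $H^2(\hat K)$ and vanishes on $\PP_1(\hat K)$. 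The Bramble--Hilbert lemma then gives $\abs{\hat L(\hat f)}\le C\norm{\nabla^2\hat f}_{L^2(\hat K)}$ with $C$ depending only on $\hat K$.

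Finally I would invoke the standard transformation estimate for second derivatives, $\norm{\nabla^2\hat f}_{L^2(\hat K)}\le C\norm{B_K}^2\abs{\det B_K}^{-1/2}\norm{\nabla^2 f}_{L^2(K)}$ (from $\nabla^2\hat f=B_K^\top(\nabla^2 f\circ F_K)B_K$ entrywise together with the change of variables in the $L^2$ norm), and combine the estimates:
\[
  \abs{L_K(f)}=\abs{\det B_K}\,\abs{\hat L(\hat f)}\le C\abs{\det B_K}^{1/2}\norm{B_K}^2\norm{\nabla^2 f}_{L^2(K)}\le Ch^2\abs K^{1/2}\norm{\nabla^2 f}_{L^2(K)},
\]
using $\abs{\det B_K}^{1/2}\le C\abs K^{1/2}$ and $\norm{B_K}^2\le Ch^2$. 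The only genuinely delicate point is the bookkeeping of the powers of $B_K$ and $\det B_K$ in the pull-back/push-forward; the rest is routine and I do not expect a serious obstacle.
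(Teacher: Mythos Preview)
Your argument is correct: the functional $L_K$ is bounded on $H^2(K)$ (via the embedding $H^2\hookrightarrow C$ for $d\le 3$), vanishes on $\PP_1(K)$ thanks to the centroid property, and the Bramble--Hilbert lemma on the reference cell together with the usual affine-scaling bookkeeping delivers exactly the stated bound. The paper itself does not give a proof but simply cites \cite[Lemma~3.2]{MR2114385}; your scaling argument is the standard route to such centroid super-approximation estimates and is presumably what the cited reference does as well.
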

\begin{proof}
  See~\cite[Lemma 3.2]{MR2114385}.
\end{proof}

Similar to~\cite[Lemma 3.3]{MR2114385}, we prove the following result adapted to the problem
considered here:
\begin{lemma} \label{lemma:centroidpointwisestabilitystateequation}
  Let $d=2$, $\bar q\in\Qad$ be the solution of \eqref{definition:contproblem_red}, and
  $r_h\in\Qhad^c$ defined by~\eqref{eq:rh}.  Then, there is $h_0>0$ such that for all $i\in
  I\setminus L$ and all $h\le h_0$, it holds
  \[
    \abs{S_h(\bar q - r_h)(x_i)}\leq Ch^2
  \]
  with a constant $C$ independent of $h$.
\end{lemma}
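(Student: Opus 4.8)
The plan is to exploit the representation of $S_h v(x_i)$ via the discrete Green's function $z_{h,i}$ from~\eqref{eq:zih}: testing the discrete state equation for $S_h(\bar q - r_h)$ with $\phi_h = z_{h,i}$ gives
\[
  S_h(\bar q - r_h)(x_i) = (\nabla S_h(\bar q-r_h),\nabla z_{h,i}) = (\bar q - r_h, z_{h,i}).
\]
So everything reduces to estimating $(\bar q - r_h, z_{h,i})$. Here $r_h = R_h\bar q$ is the \emph{centroid} interpolant, not the $L^2$ projection, which is precisely what makes Proposition~\ref{lemma:integrationresult} applicable: on each cell $K$, $\int_K(\bar q - r_h)\,dx = \int_K(\bar q(x) - \bar q(S_K))\,dx$, and this integral of the cell-wise error has the extra $h^2$ smallness whenever $\bar q\in H^2(K)$.

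The key step is to split the sum over cells according to the decomposition $\Th = \Th^1\cup\Th^2\cup\Th^3$ and, crucially, according to whether a cell lies inside or outside the balls $B_j$, $j\in I\setminus L$, supplied by Lemma~\ref{lemma:boundednessofprojectiondiscrete} (or Lemma~\ref{lemma:boundednessofprojection}). Write $B = \bigcup_{j\in I\setminus L} B_j$. On cells $K$ with $K\cap B_j\neq\emptyset$ for some $j\ne i$: there $\bar q$ is locally constant ($=a$ or $=b$ by Lemma~\ref{lemma:boundednessofprojection}), so $\bar q - r_h = 0$ on such $K$ and these cells contribute nothing. On cells near $x_i$ itself, the same holds if $i\in I\setminus L$, so again $\bar q - r_h=0$ there. (This is the reason the restriction $i\in I\setminus L$ appears in the statement, and the reason the singularity of $z_{h,i}$ at $x_i$ never enters.) Hence the only cells that contribute are those in $\Th^1\cup\Th^2\cup\Th^3$ that lie in $\Om\setminus\bar B$ — equivalently, away from every $x_j$ — and there $z_{h,i}$ is smooth and, by Lemma~\ref{lemma:boundednessdiscrete}, satisfies $\norm{z_{h,i}}_{L^\infty(\Om\setminus\bar B)}\le C$ and $\norm{\nabla z_{h,i}}_{L^2(\Om\setminus\bar B)}\le C$, both uniformly in $h$.

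On the remaining cells I would argue as follows. For $K\in\Th^1$, $\bar q$ is constant on $K$, so $\bar q-r_h=0$. For $K\in\Th^1\cup\Th^2$ one has $\bar q\in H^2(K)$ (combining Lemma~\ref{lemma:boundednessofprojection} with the projection formula and elliptic regularity of $\bar z$ away from the $x_j$); I would subtract a cell-constant (say $\bar z(S_K)$-based value) from $z_{h,i}$ so that on each such $K$,
\[
  \Bigl|\int_K (\bar q - r_h)\, z_{h,i}\,dx\Bigr|
  = \Bigl|\int_K (\bar q - r_h)\,(z_{h,i} - c_K)\,dx\Bigr|
  \le \Bigl|\int_K(\bar q - \bar q(S_K))\,dx\Bigr|\,\norm{z_{h,i}}_{L^\infty(K)} + \norm{\bar q - r_h}_{L^2(K)}\norm{z_{h,i}-c_K}_{L^2(K)},
\]
where I choose $c_K$ as the mean of $z_{h,i}$ on $K$; the first term is $\le Ch^2|K|^{1/2}\norm{\nabla^2\bar q}_{L^2(K)}$ by Proposition~\ref{lemma:integrationresult}, and the second is $\le Ch\,\norm{\nabla\bar q}_{L^\infty(K)}\cdot h\norm{\nabla z_{h,i}}_{L^2(K)}$. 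Summing over $K\in\Th^2$ and using Cauchy–Schwarz together with $\norm{\nabla^2\bar q}_{L^2(\Om_0)}\le C$ and the uniform bounds on $z_{h,i}$ from Lemma~\ref{lemma:boundednessdiscrete} yields an $\mathcal O(h^2)$ contribution. For $K\in\Th^3$ one only has $\bar q\in W^{1,\infty}$, so I bound crudely: $|\int_K(\bar q-r_h)z_{h,i}| \le Ch\norm{\nabla\bar q}_{L^\infty(K)}|K|\,\norm{z_{h,i}}_{L^\infty(K)}$, and then $\sum_{K\in\Th^3}|K|\le Ch$ by Assumption~\ref{ass:1} gives again an $\mathcal O(h^2)$ contribution. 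Adding the three pieces finishes the estimate.

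The main obstacle I anticipate is the bookkeeping around the cells that straddle the boundary of $B$ or of the active set — i.e., cells in $\Th^3$ that are also near some $x_j$, or cells where $\bar q-r_h$ is nonzero but $z_{h,i}$ is not yet in the "good" region $\Om\setminus\bar B$. The clean way around this is to first fix $h_0$ small enough that $\bar B\subset\bigcup_{K: K\cap B\ne\emptyset}K$ and that all these cells lie strictly inside a slightly larger ball on which $\bar q$ is still constant (this is exactly what Lemma~\ref{lemma:equalityqqh}/Lemma~\ref{lemma:boundednessofprojection} provide, since the $B_j$ can be taken with a fixed positive radius independent of $h$); then $\bar q - r_h$ vanishes identically on every cell meeting $B$, and no straddling issue remains. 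The rest is then a routine cell-by-cell summation against the $h$-uniform norms of $z_{h,i}$ away from its singularity.
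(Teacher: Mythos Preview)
Your plan is correct and follows essentially the same route as the paper: represent $S_h(\bar q-r_h)(x_i)=(z_{h,i},\bar q-r_h)$, use Lemma~\ref{lemma:boundednessofprojection} to ensure $\bar q-r_h=0$ on cells covering a fixed neighbourhood $B$ of all $x_j$ with $j\in I\setminus L$, split the remaining cells into $\Th^1,\Th^2,\Th^3$, use Assumption~\ref{ass:1} plus $\norm{z_{h,i}}_{L^\infty(\Om\setminus\bar B)}\le C$ on $\Th^3$, and Proposition~\ref{lemma:integrationresult} together with the $H^2$-regularity of $\bar z$ and Lemma~\ref{lemma:boundednessdiscrete} on $\Th^2$.

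Two small points. First, the displayed equality $\bigl|\int_K(\bar q-r_h)z_{h,i}\bigr|=\bigl|\int_K(\bar q-r_h)(z_{h,i}-c_K)\bigr|$ is false: since $r_h=R_h\bar q$ is the centroid interpolant (not $\pi_h\bar q$), the integral $\int_K(\bar q-r_h)$ does not vanish. What you actually need is the decomposition $\int_K(\bar q-r_h)z_{h,i}=c_K\!\int_K(\bar q-r_h)+\int_K(\bar q-r_h)(z_{h,i}-c_K)$, which then yields exactly the two terms you bound. Second, the paper handles the $\Th^2$ cells slightly more directly: since $z_{h,i}$ is linear on each $K$, the centroid rule is exact and $\int_K z_{h,i}\,r_h=\bar q(S_K)z_{h,i}(S_K)|K|$, so $\int_K z_{h,i}(\bar q-r_h)=\int_K\bigl(z_{h,i}\bar q-(z_{h,i}\bar q)(S_K)\bigr)$ and Proposition~\ref{lemma:integrationresult} applies to the product $z_{h,i}\bar q$; the product rule (with $\nabla^2 z_{h,i}|_K=0$) then gives precisely the same two contributions as your mean-subtraction argument.
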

\begin{proof}
  Let $z_{h,i}\in V_h$ be the solution of~\eqref{eq:zih}. Then, we can write
  \[
    S_h(\bar q -  r_h)(x_i) = (\nabla z_{h,i}, \nabla (S_h\bar q - S_h r_h)) = (z_{h,i},\bar q-r_h).
  \]
For $K\in\Th^1$, it holds $\bar q\bigr\rvert_K\in \set{a,b}$. Hence, we have $r_h=\bar q$ there
and  $(z_{h,i},\bar q-r_h)$ vanishes on $\bigcup\Th^1$. Consequently, we get
\[
  (z_{h,i},\bar q-r_h) = \sum_{K\in\Th^2}\int_K z_{h,i}(\bar q-r_h) \, dx+\sum_{K\in\Th^3}\int_K
  z_{h,i}(\bar q-r_h) \, dx.
\]
Lemma~\ref{lemma:boundednessofprojection} ensures the existence of open sets $B_i\subset\Om$ for
$i\in I\setminus L$ with $x_i\in B_i$ and $\bar q(x)\in \set{a,b}$ for $x\in B_i$. For $h\le h_0$,
the sets $B_i$ can be chosen such that $B_i\subset\bigcup\Th^1$. 

We first consider the integral over $K\in \Th^3$.  Again by
Lemma~\ref{lemma:boundednessofprojection}, we know that $\bar q\in W^{1,\infty}(K)$ which implies for
$x\in K$ that
\[
  \abs{\bar q(x) - r_h(x)}=\abs{\bar q(x)-\bar q(S_K)}\leq C \abs{x-S_K}\norm{\nabla \bar
  q}_{L^\infty(\Om)} \leq Ch \norm{\nabla \bar q}_{L^\infty(\Om)}.
\]
For $B=\bigcup_{i\in I\setminus L}B_i$ it holds $B\subset\bigcup\Th^1$. Hence, we have
$\bigcup\Th^3\subset\Om\setminus \bar B$ and one obtains
\[
  \begin{split}
    \left\lvert\sum_{K\in \Th^3}\int_Kz_{h,i}(\bar q-r_h) \, dx\right\rvert 
    &\leq \sum_{K\in\Th^3}\int_{K}\abs{z_{h,i}(\bar q-r_h)}\, dx\\
    &\leq Ch \norm{z_{h,i}}_{L^{\infty}(\Omega \setminus \bar B)}\norm{\nabla \bar q}_{L^\infty(\Om)} \sum_{K\in\Th^3}  \abs{K} 
    \leq C h^2 \norm{\nabla \bar q}_{L^\infty(\Om)}
  \end{split}
\]
by means of Assumption~\ref{ass:1} and Lemma~\ref{lemma:boundednessdiscrete}.

For a cell $K\in\Th^2$, we have that 
\[
  \int_K z_{h,i}r_h  \, dx = \int_K z_{h,i}\bar q(S_K)  \, dx= \int_K z_{h,i}(S_K)\bar q(S_K)\, dx.
\]
Using this, we obtain by Proposition~\ref{lemma:integrationresult} and the Cauchy–Schwarz inequality that
\[
  \left\lvert\sum_{K\in\Th^2}\int_K z_{h,i}(\bar q - r_h) \, dx\right\rvert\leq Ch^2 
  \left( \sum_{K\in \Th^2}\norm{\nabla^2 (z_{h,i}\bar q )}^2_{L^2(K)}\right)^{\frac{1}{2}}.
\]
On $\bigcup\Th^2$, we have $\bar q = -\alpha^{-1}\bar z$.  By the product rule, we get since
$\Th^2\subset\Om\setminus \bar B$ that
\[
  \left( \sum_{K \in \Th^2}\norm{\nabla^2(z_{h,i}\bar q )}^2_{L^2(K)}\right)^{\frac{1}{2}}
  \le C\norm{z_{h,i}}_{L^\infty(\Om\setminus \bar B)} \norm{\nabla^2\bar z}_{L^2(\Om\setminus\bar B)}
  + C\norm{\nabla z_{h,i}}_{L^2(\Om\setminus \bar B)} \norm{\nabla\bar z}_{L^\infty(\Om\setminus
  \bar B)}.
\]
Using Lemma~\ref{lemma:boundednessdiscrete} for the terms involving $z_{h,i}$ and
Lemma~\ref{lemma:lipschitzatedges} for 
\[
  \bar z=\sum_{i\in I\setminus L} (S\bar q(x_i)-\xi_i)z_i,
\]
we obtain the assertion by collecting the previous estimates. 
\end{proof}

\begin{lemma} \label{lemma:centroidpointwisestabilityadjointequation}
  Let $d=2$ and $B_i\subset\Om$ open balls with $x_i\in B_i$ for $i\in I\setminus L$. For $\hat z_h\in V_h$ given as the solution of
  \begin{equation}\label{eq:hat_zh}
    (\nabla \hat z_h,\nabla \phi_h)=\sum_{i\in I}(S_hr_h(x_i)-\xi_i)\phi_h(x_i)\quad\forall \phi_h\in V_h.
  \end{equation}
  for $r_h$ defined by~\eqref{eq:rh} and $B=\bigcup_{i\in I\setminus L}B_i$, it holds
  \[
    \norm{\bar z - \hat z_h}_{L^2(\Omega \setminus \bar B)}\leq C h^2 \abs{\ln h}^2
  \]
  with a constant $C$ independent of $h$.
\end{lemma}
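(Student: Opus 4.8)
The plan is to compare $\hat z_h$ with $\bar z$ through the intermediate function $\tilde z_h\in V_h$ given as the solution of~\eqref{eq:tilde_zh} for $q=\bar q$, i.e.\ $\tilde z_h=\sum_{i\in I}(S\bar q(x_i)-\xi_i)z_{h,i}$ with $z_{h,i}$ from~\eqref{eq:zih}. By the triangle inequality,
\[
  \norm{\bar z-\hat z_h}_{L^2(\Om\setminus\bar B)}\le\norm{\bar z-\tilde z_h}_{L^2(\Om\setminus\bar B)}+\norm{\tilde z_h-\hat z_h}_{L^2(\Om\setminus\bar B)}.
\]
Since $\bar q$ solves~\eqref{definition:contproblem_red}, $\bar z$ is exactly the $z$ from~\eqref{eq:z} for $q=\bar q$, so Lemma~\ref{lemma:neu} (with $q=\bar q$ and the same balls $B_i$) bounds the first term by $Ch^2\lh^2$. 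For the second term, using $\hat z_h=\sum_{i\in I}(S_hr_h(x_i)-\xi_i)z_{h,i}$ and $\norm{z_{h,i}}_{L^2(\Om)}\le C$ from Lemma~\ref{lemma:boundl2},
\[
  \norm{\tilde z_h-\hat z_h}_{L^2(\Om\setminus\bar B)}\le\sum_{i\in I}\abs{S\bar q(x_i)-S_hr_h(x_i)}\,\norm{z_{h,i}}_{L^2(\Om)}\le C\sum_{i\in I}\abs{S\bar q(x_i)-S_hr_h(x_i)},
\]
so the whole proof reduces to showing $\abs{S\bar q(x_i)-S_hr_h(x_i)}\le Ch^2\lh^2$ for every $i\in I$.

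To this end I would split $S\bar q(x_i)-S_hr_h(x_i)=\bigl(S\bar q(x_i)-S_h\bar q(x_i)\bigr)+S_h(\bar q-r_h)(x_i)$. The first summand is bounded by $Ch^2\lh^2$ by Lemma~\ref{lemma:pointwisebounds},~(i), applied with $f=\bar q\in L^\infty(\Om)$ on a smooth $\Om_0$ and $\Om_1$ with $\{x_i\mid i\in I\}\subset\Om_1\Subset\Om_0\Subset\Om$. For the second summand and $i\in I\setminus L$, Lemma~\ref{lemma:centroidpointwisestabilitystateequation} gives at once $\abs{S_h(\bar q-r_h)(x_i)}\le Ch^2$. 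The remaining case $i\in L$ is the heart of the matter and must be done by hand, essentially repeating the proof of Lemma~\ref{lemma:centroidpointwisestabilitystateequation}: one writes $S_h(\bar q-r_h)(x_i)=(z_{h,i},\bar q-r_h)$, splits $\Th=\Th^1\cup\Th^2\cup\Th^3$, notes that the contribution of $\Th^1$ vanishes since $\bar q=r_h$ there, estimates the $\Th^3$-part by $Ch^2\lh$ via Assumption~\ref{ass:1}, and on $\Th^2$, where $\bar q=-\alpha^{-1}\bar z\in H^2$, uses that the centroid quadrature is exact for the piecewise linear $z_{h,i}$ to rewrite $\int_K z_{h,i}(\bar q-r_h)=\int_K\bigl(z_{h,i}\bar q-(z_{h,i}\bar q)(S_K)\bigr)$ and applies Proposition~\ref{lemma:integrationresult} together with Cauchy--Schwarz.

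The main obstacle is precisely this $i\in L$ case. For $i\in L$ the coefficient of $z_i$ in $\bar z$ vanishes, but the coefficient of $z_{h,i}$ in $\hat z_h$ does not, so the whole singular part $z_{h,i}$ survives on $\Om\setminus\bar B$ and must be compensated by smallness of $S_hr_h(x_i)-\xi_i=S_hr_h(x_i)-S\bar q(x_i)$; moreover, near $x_i$ the function $\bar q$ is now in the interior of $[a,b]$ (or at worst Lipschitz across the free boundary), so the cells near $x_i$ lie in $\Th^2$ (possibly $\Th^3$) and the discrete Green's function $z_{h,i}$ sits at its own singularity inside the region of integration. Consequently the $\Th^2$-estimate requires $\norm{\nabla^2(z_{h,i}\bar q)}_{L^2(\bigcup\Th^2)}\le C\lh$, which follows from $\norm{\nabla\bar q}_{L^\infty(\Om)}$ and $\norm{\nabla^2\bar z}_{L^2(\Om\setminus\bar B)}$ being bounded (Lemmas~\ref{lemma:boundednessofprojection} and~\ref{lemma:lipschitzatedges}, writing $\bar z=\sum_{i\in I\setminus L}(S\bar q(x_i)-\xi_i)z_i$), combined with the two-dimensional bounds $\norm{z_{h,i}}_{L^\infty(\Om)}\le C\lh$ and $\norm{\nabla z_{h,i}}_{L^2(\Om)}^2=z_{h,i}(x_i)\le C\lh$ (the latter by testing~\eqref{eq:zih} with $z_{h,i}$, both available from the planar discrete Green's function theory underlying Lemma~\ref{lemma:cutoffeq}); away from a fixed ball about $x_i$ the factors $z_{h,i}$, $\nabla z_{h,i}$ are bounded by Lemma~\ref{lemma:boundednessdiscrete}. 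Collecting the $\Th^1$, $\Th^2$, $\Th^3$ contributions yields $\abs{S_h(\bar q-r_h)(x_i)}\le Ch^2\lh$ also for $i\in L$, hence $\abs{S\bar q(x_i)-S_hr_h(x_i)}\le Ch^2\lh^2$ for all $i\in I$; adding this to the estimate from Lemma~\ref{lemma:neu} completes the proof.
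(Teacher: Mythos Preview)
Your argument is essentially the paper's: the same split through $\tilde z_h$ from~\eqref{eq:tilde_zh}, Lemma~\ref{lemma:neu} for $\|\bar z-\tilde z_h\|_{L^2(\Om\setminus\bar B)}$, and reduction of $\|\tilde z_h-\hat z_h\|_{L^2(\Om\setminus\bar B)}$ to coefficient differences via Lemma~\ref{lemma:boundl2}. The paper then simply invokes Lemma~\ref{lemma:centroidpointwisestabilitystateequation} to bound $\sum_{i\in I}\abs{S_h(\bar q-r_h)(x_i)}$ by $Ch^2$, whereas you first split off $S\bar q(x_i)-S_h\bar q(x_i)$ (handled by Lemma~\ref{lemma:pointwisebounds}) and then treat $i\in L$ separately by rerunning the cell-by-cell argument with the global two-dimensional bounds $\|z_{h,i}\|_{L^\infty(\Om)}\le C\lh$ and $\|\nabla z_{h,i}\|_{L^2(\Om)}^2=z_{h,i}(x_i)\le C\lh$. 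This is a genuine refinement rather than a different route: Lemma~\ref{lemma:centroidpointwisestabilitystateequation} is stated only for $i\in I\setminus L$, and its proof uses $x_i\in B\subset\bigcup\Th^1$, so your explicit handling of $i\in L$ fills a point the paper leaves implicit; the extra $\lh$ you pick up there is harmlessly absorbed in the final $h^2\lh^2$.
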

\begin{proof}
  We split
  \[
    \norm{\bar z - \hat z_h}_{L^2(\Omega \setminus \bar B)}\leq \norm{\bar z - \tilde z_h}_{L^2(\Omega
    \setminus \bar B)}+ \norm{\tilde z_h - \hat z_h}_{L^2(\Omega \setminus \bar B)},
  \]
  where $\tilde z_h\in V_h$ is the solution of~\eqref{eq:tilde_zh}. For the first term,
  Lemma~\ref{lemma:neu} states
  \[
    \norm{\bar z-\tilde z_h}_{L^2(\Om\setminus \bar B)} \leq Ch^2\lh^2.
  \]
  For the second term, we get form Lemma~\ref{lemma:boundl2} that
  \[
    \norm{\tilde z_h - \hat z_h}_{L^2(\Omega \setminus \bar B)}\le \norm{\tilde z_h - \hat
    z_h}_{L^2(\Omega)}\le C \sum_{i\in I}\abs{S_h(\bar q-r_h)(x_i)}.
  \]
  Then, by Lemma~\ref{lemma:centroidpointwisestabilitystateequation}, we get
  \[
    \norm{\tilde z_h - \hat z_h}_{L^2(\Omega \setminus \bar B)}\leq Ch^2.
  \]
  Collecting the estimates yields the assertion.
\end{proof}

\begin{lemma} \label{lemma:convergencecentroidcellwise}
  Let $d=2$, $\bar q_h\in \Qhad^c$ be the solution of \eqref{definition:discreteproblem_red} with
  $\Qhad=\Qhad^c$, and $r_h$ defined by~\eqref{eq:rh}.  Then, there exists $h_0>0$ such that for $h\le h_0$,
  it holds
  \[
    \norm{\bar q_h - r_h}_{L^2(\Om)} \leq C h^2\abs{\ln h}^2
  \]
  with a constant $C$ independent of $h$.
\end{lemma}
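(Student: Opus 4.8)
The plan is to run the variational stability argument of Theorem~\ref{theorem:result2}, but localized over the mesh classes $\Th^1,\Th^2,\Th^3$ and exploiting the projection characterizations of $\bar q$ and $\bar q_h$. By Lemma~\ref{lemma:normestimateby2ndderivative} with an intermediate control $p$ and the mean value theorem,
\[
  \alpha\norm{r_h-\bar q_h}_{L^2(\Om)}^2\le j_h''(p)(r_h-\bar q_h,r_h-\bar q_h)=j_h'(r_h)(r_h-\bar q_h)-j_h'(\bar q_h)(r_h-\bar q_h).
\]
Since $r_h\in\Qhad^c$, the discrete optimality condition~\eqref{theorem:discreteproblem:optcondeq} gives $j_h'(\bar q_h)(r_h-\bar q_h)\ge 0$, so it suffices to bound $j_h'(r_h)(r_h-\bar q_h)=(\alpha r_h+\hat z_h,r_h-\bar q_h)$, where $\hat z_h$ is exactly the discrete adjoint~\eqref{eq:hat_zh} belonging to $r_h$ (Lemma~\ref{discderivatives}). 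As $r_h-\bar q_h$ is cellwise constant and $\hat z_h$ is affine on each $K$ (its cell mean equals $\hat z_h(S_K)$),
\[
  (\alpha r_h+\hat z_h,r_h-\bar q_h)=\sum_{K\in\Th}(r_h-\bar q_h)\big|_K\,\abs{K}\,\bigl(\alpha r_h|_K+\hat z_h(S_K)\bigr),
\]
which I split over $\Th^1,\Th^2,\Th^3$, with $\Th^1$ further decomposed into cells $\Th^1_{\mathrm{far}}$ whose centroid $S_K$ lies at distance $\gtrsim\norm{\bar z-\bar z_h}_{L^\infty(\Om\setminus\bar B)}$ from the free boundary between $\{\bar q=a\}$ and $\{\bar q=b\}$, and a thin layer $\Th^1_{\mathrm{near}}$.

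On $\Th^1_{\mathrm{far}}$ (which contains the balls $B_i$ of Lemma~\ref{lemma:equalityqqh}): by Proposition~\ref{projectionformula} $r_h|_K=\bar q(S_K)\in\{a,b\}$, while Remark~\ref{rem:P} (note $\pi_h\bar z_h|_K=\bar z_h(S_K)$ for affine $\bar z_h$) together with the smallness of $\norm{\bar z-\bar z_h}_{L^\infty(\Om\setminus\bar B)}$ and Lemma~\ref{lemma:boundednessofprojectiondiscrete} forces $\bar q_h|_K$ to take the same bound; hence $(r_h-\bar q_h)|_K=0$ and these cells drop out. On $\Th^2$ one has $\bar q=-\alpha^{-1}\bar z$ (Proposition~\ref{projectionformula}), hence $r_h|_K=-\alpha^{-1}\bar z(S_K)$ and $\alpha r_h|_K+\hat z_h(S_K)=\hat z_h(S_K)-\bar z(S_K)$, so
\[
  \abs{K}\bigl(\hat z_h(S_K)-\bar z(S_K)\bigr)=\int_K(\hat z_h-\bar z)\,dx+\int_K\bigl(\bar z-\bar z(S_K)\bigr)\,dx;
\]
estimating the first integral in $L^2$ by Lemma~\ref{lemma:centroidpointwisestabilityadjointequation} and the second by the centroid superconvergence Proposition~\ref{lemma:integrationresult} applied to $\bar z\in H^2(\Om\setminus\bar B)$ (Lemma~\ref{lemma:lipschitzatedges}), then summing by Cauchy--Schwarz over $K\in\Th^2$, the $\Th^2$-contribution is at most $Ch^2\lh^2\norm{r_h-\bar q_h}_{L^2(\Om)}$.

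The crux is the near-free-boundary cells $\Th^3\cup\Th^1_{\mathrm{near}}$, where $\bar q$ is merely Lipschitz: the naive bound (pointwise size $O(h)$ of $r_h-\bar q_h$ times the $O(h)$ total measure of Assumption~\ref{ass:1}) would only give the rate $h^{3/2}$. Instead I use $r_h|_K=\bar q(S_K)=P_{[a,b]}(-\alpha^{-1}\bar z(S_K))$ and $\bar q_h|_K=P_{[a,b]}(-\alpha^{-1}\bar z_h(S_K))$ together with the $1$-Lipschitz continuity of $P_{[a,b]}$, so that $\abs{(r_h-\bar q_h)|_K}\le\alpha^{-1}\norm{\bar z-\bar z_h}_{L^\infty(\Om\setminus\bar B)}$; since $\bar z_h-\hat z_h=\sum_{i\in I}S_h(\bar q_h-r_h)(x_i)z_{h,i}$ is bounded in $L^\infty(\Om\setminus\bar B)$ by $C\norm{r_h-\bar q_h}_{L^2(\Om)}$ (Lemmas~\ref{lemma:stability} and~\ref{lemma:boundednessdiscrete}), and $\norm{\bar z-\hat z_h}_{L^\infty(\Om\setminus\bar B)}\le Ch^2\lh^2$ follows along the lines of Lemma~\ref{lemma:centroidpointwisestabilityadjointequation} with the $L^\infty$ Green's-function estimate from Proposition~\ref{theorem:compactconv} (the $z_i$ being harmonic, hence smooth, off $x_i$) in place of Lemma~\ref{lemma:convergencewithoutsingularity} and with Lemma~\ref{lemma:centroidpointwisestabilitystateequation} controlling the amplitude errors $S_h(\bar q-r_h)(x_i)$, one obtains $\abs{(r_h-\bar q_h)|_K}\le C\bigl(h^2\lh^2+\norm{r_h-\bar q_h}_{L^2(\Om)}\bigr)$ on these cells. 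Combined with $\abs{\alpha r_h|_K+\hat z_h(S_K)}\le Ch\lh$ there (the projection residual $\alpha\bar q(S_K)+\bar z(S_K)$ is $O(h\lh)$ because $S_K$ lies within $O(h\lh)$ of the free boundary, and $\norm{\bar z-\hat z_h}_{L^\infty(\Om\setminus\bar B)}$ is of higher order) and the measure bound $\sum_{K\in\Th^3\cup\Th^1_{\mathrm{near}}}\abs{K}\le Ch\lh$ (Assumption~\ref{ass:1} together with the rectifiability of the free boundary), this contribution is at most $Ch^4\lh^4+Ch^2\lh^2\norm{r_h-\bar q_h}_{L^2(\Om)}$.

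Adding the three contributions yields $\alpha\norm{r_h-\bar q_h}_{L^2(\Om)}^2\le Ch^2\lh^2\norm{r_h-\bar q_h}_{L^2(\Om)}+Ch^4\lh^4$, and Young's inequality absorbs the linear term into $\tfrac\alpha2\norm{r_h-\bar q_h}_{L^2(\Om)}^2$, which gives the claim. The principal difficulty is precisely the near-free-boundary analysis: one cannot afford the factor $h^{1/2}$ that a crude small-measure estimate costs, so one must show that there $r_h-\bar q_h$ is not merely bounded but genuinely of order $h^2\lh^2$ (up to an absorbable multiple of $\norm{r_h-\bar q_h}_{L^2(\Om)}$), and this rests on the sharp $L^\infty$ behaviour of $z_{h,i}$ away from its singularity and on $\hat z_h$ being built from the exact control $r_h$, so that no control-discretization amplitude error enters it.
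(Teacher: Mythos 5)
Your overall frame (quadratic growth of $j_h$, discrete optimality to discard $j_h'(\bar q_h)(r_h-\bar q_h)$, so that only $j_h'(r_h)(r_h-\bar q_h)=(\alpha r_h+\hat z_h,\,r_h-\bar q_h)$ remains) is the right starting point and agrees with the paper up to that line. The decisive step you are missing is that the paper then invokes the \emph{continuous} variational inequality pointwise at the centroids: since $(\alpha\bar q(x)+\bar z(x))(\delta q-\bar q(x))\ge 0$ for every $\delta q\in[a,b]$ and every $x$ away from the singular points, choosing $x=S_K$ and $\delta q=\bar q_h(S_K)$ and integrating over $K$ gives $(\alpha r_h+R_h\bar z,\bar q_h-r_h)\ge 0$. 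Adding this to the discrete inequality leaves only $(R_h\bar z-\bar z_h,\bar q_h-r_h)$ to estimate, and that is done uniformly over all cells: the piece $R_h\bar z-\bar z$ by the centroid superconvergence of Proposition~\ref{lemma:integrationresult} applied to $\bar z\in H^2(\Om\setminus\bar B_1)$, the piece $\bar z-\hat z_h$ in $L^2$ by Lemma~\ref{lemma:centroidpointwisestabilityadjointequation}, and the piece $\hat z_h-\bar z_h$ equals $-\sum_{i}\bigl(S_h(\bar q_h-r_h)(x_i)\bigr)^2\le 0$. No decomposition of the active set and no $L^\infty$ control of the discrete adjoint error are needed; Assumption~\ref{ass:1} enters only through Lemma~\ref{lemma:centroidpointwisestabilitystateequation}.

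Because you do not use this sign information, you are forced to control the residual $\alpha r_h(S_K)+\hat z_h(S_K)$ on the active cells directly, and this is where your argument breaks. The claim that on $\Th^1_{\mathrm{far}}$ the discrete control takes the same bound value as $r_h$, so that those cells drop out, presupposes that $\abs{-\alpha^{-1}\bar z-a}$ (resp.\ $\abs{-\alpha^{-1}\bar z-b}$) grows at least linearly with the distance to the free boundary; no such nondegeneracy is assumed or provable here, and without it a cell arbitrarily far from the free boundary may satisfy $-\alpha^{-1}\bar z(S_K)=a$ exactly, so that an $O(h^2)$ perturbation in $\bar z_h(S_K)$ flips $\bar q_h\bigr\rvert_K$ off the bound. (Lemma~\ref{lemma:boundednessofprojectiondiscrete} provides this margin only on the special balls $B_i$ around the $x_i$, where $\bar z$ blows up, not on generic active cells.) In addition, your layer bound $\sum_{K\in\Th^3\cup\Th^1_{\mathrm{near}}}\abs{K}\le Ch\lh$ strengthens Assumption~\ref{ass:1}, which only concerns the cells actually meeting the free boundary, and the estimate $\norm{\bar z-\hat z_h}_{L^\infty(\Om\setminus\bar B)}\le Ch^2\lh^2$ is asserted but would require an interior maximum-norm error estimate for the regularized Green's functions that the paper establishes only in $L^2$ (Lemma~\ref{lemma:convergencewithoutsingularity}). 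Each of these might be repairable under extra hypotheses, but the centroid variational inequality makes all of them unnecessary.
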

\begin{proof}
  As in \cite{MR2114385}, we first derive a variational inequality for $r_h$. 
  To this end let $h_0>0$ be sufficiently small such that the sets $\set{B_{0,i}\subset\Om|i\in I\setminus
  L}$ given by  Lemma~\ref{lemma:equalityqqh}, subsets $\set{B_{1,i}\subset B_{0,i}|i\in I\setminus L}$, and
  a subset of cells $\widetilde{\mathcal T}_{h_0}\subset \mathcal{T}_{h_0}$ fulfill the relation
  \[
    B_1\subset \Om_{h_0}\subset B_0
  \]
  for $B_0=\bigcup_{i\in I\setminus L}B_{0,i}$, $\Om_{h_0}=\bigcup \widetilde{\mathcal
  T}_{h_0}$, and $B_1=\bigcup_{i\in I\setminus L}B_{1,i}$.

  By Lemma~\ref{lemma:lipschitzatedges}, we have that $\bar z\in C(\bar \Omega\setminus B_1)$. We now
  apply the optimality condition~\eqref{theorem:continuousproblem:optcondeq}, which holds true for
  all $\delta q\in \Qad$ and also pointwise on $\bar\Om\setminus B_1$:
  \[
    (\alpha \bar q (x)+\bar z(x)) (\delta q - \bar q(x))\geq 0 \qquad \forall \delta q \in
    [a,b],~x\in \bar \Omega \setminus B_1.
  \]
  We apply this formula for $x=S_K$ with $K\in\Th$ and $K\subset\Om\setminus\Om_{h_0}$ and $\delta
  q=\bar q_h(S_K)$ which gives
  \[
    (\alpha r_h(S_K)+ \bar z(S_K) )(\bar q_h(S_K) - r_h(S_K))\geq 0.
  \]
  Integrating this inequality over $K$ and summing this up over $K\in \Th$ with $K\subset\Om
  \setminus \Om_{h_0}$ yields
  \[
    (\alpha r_h+R_h \bar z , \bar q_h - r_h)_{L^2(\Omega \setminus \Om_{h_0})}\geq 0.
  \]
  Noting that $\bar q_h-r_h=0$ on $B_0$ implies
  \[
    (\alpha r_h+R_h \bar z , \bar q_h - r_h)_{L^2(\Omega \setminus \bar B_0)}\geq 0.
  \]
  By testing the discrete optimality condition~\eqref{theorem:discreteproblem:optcondeq} with
  $\delta q_h=r_h$, we get
  \[
    (\alpha \bar q_h+\bar z_h , r_h-\bar q_h)_{L^2(\Om\setminus \bar B_0)}\geq 0
  \]
  again by using $r_h-\bar q_h=0$ on $B_0$.  Adding the last two inequalities results in the estimate
  \begin{equation}\label{eq:b1}
    \alpha \norm{r_h - \bar q_h}_{L^2(\Omega)}^2 = \alpha \norm{r_h-\bar q_h}^2_{L^2(\Omega \setminus
    \bar B_0)} \leq ( R_h \bar z - \bar z_h, \bar q_h - r_h)_{L^2(\Om\setminus \bar B_0)}.
  \end{equation}
  We split the right-hand side of the above inequality to get
  \begin{multline*}
    ( R_h \bar z - \bar z_h, \bar q_h - r_h)_{L^2(\Om\setminus \bar B_0)}
    =(R_h \bar z - \bar z, \bar q_h - r_h)_{L^2(\Om\setminus \bar B_0)}\\+ (\bar z - \hat z_h, \bar q
    _h - r_h)_{L^2(\Om\setminus \bar B_0)}+(\hat z_h -
    \bar z_h,\bar q_h - r_h)_{L^2(\Om\setminus \bar B_0)},
  \end{multline*}
  where $\hat z_h\in V_h$ solves~\eqref{eq:hat_zh}.  We separately estimate the three terms on the
  right-hand side.  

  Using Proposition~\ref{lemma:integrationresult}, the fact that $\bar q_h=r_h$ on
  $B_0$, and that $\bar q_h$ and $r_h$ are piecewise constant, one arrives for the first term at
  \[
    \begin{split}
      (R_h \bar z - \bar z, \bar q_h - r_h)_{L^2(\Omega\setminus \bar B_0)}
      &\le\sum_{\substack{K\in\Th\\K\subset \Om\setminus \Om_{h_0}}}\abs{\bar
    q_h(S_K)-r_h(S_K)}\left\lvert\int_{K}(R_h \bar z - \bar z)\, dx\right\rvert\\
    &\leq Ch^2\sum_{\substack{K\in\Th\\K\subset \Om\setminus \Om_{h_0}}}\abs{\bar q_h(S_K)-r_h(S_K)}\abs{K}^{\frac12}\norm{\nabla^2\bar z}_{L^2(K)}\\
    &\leq Ch^2\norm{\bar q_h - r_h}_{L^2(\Om)} \norm{\bar z}_{H^2(\Omega \setminus \bar B_1)},
  \end{split}
\]
where $\norm{\bar z}_{H^2(\Omega \setminus \bar B_1)}$ is bounded due to
Lemma~\ref{lemma:lipschitzatedges}.

For the second term, it follows by Lemma~\ref{lemma:centroidpointwisestabilityadjointequation}
that
\[
  (\bar z - \hat z_h, \bar q _h - r_h)_{L^2(\Omega\setminus \bar B_0)} \leq C h^2 \abs{\ln h}^2\norm{\bar q _h - r_h}_{L^2(\Om)}.
\]

For the last term, we have since $\bar q_h -r_h=0$ on $B_0$ that
\[
  (\hat z_h - \bar z_h,\bar q_h - r_h)_{L^2(\Omega\setminus \bar B_0)}=(\nabla(\hat z_h - \bar z_h),\nabla S_h (\bar q_h - r_h))
  =-\sum_{i\in I}\left( S_h(\bar q_h - r_h)(x_i) \right)^2\leq 0.
\]

By using the last three estimates and~\eqref{eq:b1}, we complete the proof.
\end{proof}

Using the previous lemmas, we can conclude this section by formulating the error estimate for the
post-processed control $\hat q_h$ given by
\begin{equation}\label{eq:q_hat}
  \hat q_h=P_{[a,b]}(-\alpha^{-1}\bar z_h),
\end{equation}
where $\bar z_h\in V_h$ is the adjoint state associated to the solution $\bar q_h\in \Qhad^c$ of the discrete
problem~\eqref{definition:discreteproblem_red} with $\Qhad=\Qhad^c$.

\begin{theorem} \label{theorem:result5}
  Let $d=2$, $\bar q\in \Qad$ be the solution of the continuous
  problem~\eqref{definition:contproblem_red}, and $\hat q_h$ given by~\eqref{eq:q_hat}. Furthermore,
  let Assumption~\ref{ass:1} hold. Then, it holds
  \[
    \norm{\bar q - \hat q_h}_{L^2(\Om)}\leq C h^2\abs{\ln h}^2
  \]
  with a constant $C$ independent of $h$.
\end{theorem}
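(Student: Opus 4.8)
The plan is to express the post-processed control error via the triangle inequality through the centroid projection $r_h$ and the piecewise-constant discrete control $\bar q_h$, and then exploit the projection formulas on both the continuous and discrete levels. Recall $\bar q = P_{[a,b]}(-\alpha^{-1}\bar z)$ by Proposition~\ref{projectionformula} and $\hat q_h = P_{[a,b]}(-\alpha^{-1}\bar z_h)$ by definition~\eqref{eq:q_hat}. Since $P_{[a,b]}$ is Lipschitz with constant $1$, I would first write
\[
  \norm{\bar q - \hat q_h}_{L^2(\Om)} \le \frac{1}{\alpha}\norm{\bar z - \bar z_h}_{L^2(\Om)}.
\]
The difficulty is that $\bar z - \bar z_h$ is \emph{not} small in $L^2(\Om)$ globally because of the singularities at the points $x_i$; it is only small away from those points. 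So I would split $\Om = B \cup (\Om\setminus \bar B)$, where $B = \bigcup_{i\in I\setminus L} B_i$ is the union of the balls provided by Lemma~\ref{lemma:equalityqqh} (so that $\bar q = \bar q_h$ on $B$ for $h\le h_0$), and handle the two pieces separately.

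On $B$, the key observation is that by Lemma~\ref{lemma:equalityqqh} we have $\bar q(x) = \bar q_h(x) \in \{a,b\}$ for all $x\in B$, hence also $\hat q_h$ should coincide there. More carefully: on $B_i$ the continuous projection formula forces $-\alpha^{-1}\bar z$ to lie outside $(a,b)$ on the appropriate side (this is exactly the mechanism in the proof of Lemma~\ref{lemma:boundednessofprojection}), and the discrete analogue in Lemma~\ref{lemma:boundednessofprojectiondiscrete} forces $-\alpha^{-1}\pi_h\bar z_h$ (equivalently $-\alpha^{-1}\bar z_h(S_K)$) outside $(a,b)$ on the same side; since $\bar z_h$ is continuous and the argument there gives room to spare, one can arrange (possibly shrinking the $B_i$ and $h_0$) that $-\alpha^{-1}\bar z_h$ itself lies outside $[a,b]$ on $B$, so that $\hat q_h = P_{[a,b]}(-\alpha^{-1}\bar z_h) = \bar q$ on $B$ as well. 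Thus $\norm{\bar q - \hat q_h}_{L^2(B)} = 0$ for $h\le h_0$, and the whole estimate reduces to the region $\Om\setminus\bar B$.

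On $\Om\setminus\bar B$, I would use the Lipschitz property of $P_{[a,b]}$ pointwise to get $\norm{\bar q - \hat q_h}_{L^2(\Om\setminus\bar B)} \le \alpha^{-1}\norm{\bar z - \bar z_h}_{L^2(\Om\setminus\bar B)}$, and then insert the intermediate adjoint $\hat z_h$ from~\eqref{eq:hat_zh} (built from the right-hand side coefficients $S_h r_h(x_i) - \xi_i$):
\[
  \norm{\bar z - \bar z_h}_{L^2(\Om\setminus\bar B)} \le \norm{\bar z - \hat z_h}_{L^2(\Om\setminus\bar B)} + \norm{\hat z_h - \bar z_h}_{L^2(\Om\setminus\bar B)}.
\]
The first term is exactly $\mathcal{O}(h^2\abs{\ln h}^2)$ by Lemma~\ref{lemma:centroidpointwisestabilityadjointequation}. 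For the second, both $\hat z_h$ and $\bar z_h$ lie in $V_h$ and solve discrete problems with right-hand sides differing only in the coefficients: $\hat z_h = \sum_{i\in I}(S_h r_h(x_i)-\xi_i)z_{h,i}$ and $\bar z_h = \sum_{i\in I}(S_h\bar q_h(x_i)-\xi_i)z_{h,i}$, so by Lemma~\ref{lemma:boundl2} (the $L^2$ bound on $z_{h,i}$) we get $\norm{\hat z_h - \bar z_h}_{L^2(\Om)} \le C\sum_{i\in I}\abs{S_h(r_h - \bar q_h)(x_i)}$, and since $S_h$ is $L^2\to L^\infty$ stable (Lemma~\ref{lemma:stability}) this is bounded by $C\norm{r_h - \bar q_h}_{L^2(\Om)} \le Ch^2\abs{\ln h}^2$ by Lemma~\ref{lemma:convergencecentroidcellwise}. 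Collecting the two pieces gives the claimed bound.

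The main obstacle I anticipate is the argument on $B$: one must be careful that the post-processed control $\hat q_h = P_{[a,b]}(-\alpha^{-1}\bar z_h)$ uses the \emph{un-projected} $\bar z_h$ (not $\pi_h\bar z_h$), whereas the strict-inactivity argument in Lemma~\ref{lemma:boundednessofprojectiondiscrete} is phrased for $\pi_h\bar z_h$ evaluated at centroids. The remedy is that Lemma~\ref{lemma:cutoffeq} combined with Lemma~\ref{lemma:boundednessdiscrete} gives uniform lower/upper bounds on $z_{h,i}$ that are arbitrarily large on $B_i$, while the coefficients $S_h\bar q_h(x_j)-\xi_j$ are bounded; since $\bar z_h\in V_h\subset C(\bar\Om)$ and the inequality forcing $-\alpha^{-1}\bar z_h$ outside $[a,b]$ has a strict margin on $B_i$, that margin persists pointwise (not just at centroids) after shrinking $B_i$ and $h_0$. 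Once that is in place the rest is a clean chaining of the lemmas from this section.
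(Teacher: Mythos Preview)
Your proposal is correct and follows essentially the same route as the paper: reduce to $\Om\setminus\bar B$ via $\bar q=\hat q_h$ on $B$, apply the Lipschitz property of $P_{[a,b]}$, split $\bar z-\bar z_h$ through the intermediate $\hat z_h$, and invoke Lemma~\ref{lemma:centroidpointwisestabilityadjointequation} together with Lemmas~\ref{lemma:boundl2}, \ref{lemma:stability}, and~\ref{lemma:convergencecentroidcellwise}. Your discussion of the subtlety on $B$ is in fact more explicit than the paper's: the paper simply asserts $\bar q=\hat q_h$ on $B$ by citing Lemma~\ref{lemma:equalityqqh}, relying implicitly on the pointwise inequality for $-\alpha^{-1}\bar z_h$ established inside the proof of Lemma~\ref{lemma:boundednessofprojectiondiscrete}, which you correctly identify as the actual mechanism.
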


\begin{proof}
  By Lemma~\ref{lemma:equalityqqh} there is $B=\bigcup_{i\in I\setminus L} B_i$ such that $\bar
  q=\hat q_h$ on $B$. Then, the Lipschitz continuity of the projection operator $P_{[a,b]}$
  in $L^2(\Om)$ implies
  \[
    \norm{\bar q - \hat q_h}_{L^2(\Om)}
    =\norm{\bar q - \hat q_h}_{L^2(\Om\setminus B)}
    =\norm{P_{[a,b]}(-\alpha^{-1}\bar z)- P_{[a,b]}(-\alpha^{-1}\bar z_h)}_{L^2(\Om\setminus \bar B)}
    \leq C \norm{\bar z - \bar z_h}_{L^2(\Omega \setminus \bar B)}.
  \]
  Then, we split
  \[
    \norm{\bar z - \bar z_h}_{L^2(\Omega \setminus \bar B)}\leq \norm{\bar z - \hat z_h}_{L^2(\Omega
    \setminus \bar B)} + \norm{\hat z_h-\bar z_h}_{L^2(\Om\setminus \bar B)},
  \]
  where $\hat z_h$ solves~\eqref{eq:hat_zh}. For the first term, we have by Lemma~\ref{lemma:centroidpointwisestabilityadjointequation}
  \[
    \norm{\bar z - \hat z_h}_{L^2(\Omega \setminus \bar B)}\leq C h^2 \abs{\ln h}^2.
  \]
  For the second term, we get by the
  Lemmas~\ref{lemma:stability},~\ref{lemma:convergencecentroidcellwise}, and~\ref{lemma:boundl2}.
  \[
    \begin{split}
      \norm{\bar z_h-\hat z_h}_{L^2(\Om\setminus \bar B)}
      &\le\norm{\bar z_h-\hat z_h}_{L^2(\Om)} \leq C \sum_{i\in I}\abs{S_h(\bar q_h-r_h)(x_i)} \leq C\norm{\bar q_h - r_h}_{L^2(\Om)} \\
      &\leq C h^2\abs{\ln h}^2.
    \end{split}
  \]
  Combining the estimates implies the assertion.
\end{proof}

%%%%%%%%%%%%%%%%%%%%%%%%%%%%%%%%%%%%%%%%%%%%%%%%%%%%%%%%%%%%%%
\section{Numerical Results}\label{sec:num}
%%%%%%%%%%%%%%%%%%%%%%%%%%%%%%%%%%%%%%%%%%%%%%%%%%%%%%%%%%%%%%

We give numerical results to confirm the results of the previous sections. To this end we consider
different sample problems.  The optimal control problems are solved by the optimization library
\textsc{RoDoBo} \cite{RoDoBo} and the finite element toolkit \textsc{Gascoigne} \cite{Gascoigne}.

We consider the optimal control problem \eqref{definition:contproblem1} with the slightly modified
state equation 
\[
  -\Delta u=f+q
\]
with given right-hand side $f$ in a ball $\Omega=B_{0.5}(x_1)\subset \R^d$ with $d \in \set{2,3}$.
The cost functional consist of one point evaluation at  $x_1= (0.5, 0.5)^{T}$ for $d=2$ and
respectively $x_1=(0.5, 0.5, 0.5)^T$ for $d=3$.  The choice of $\Omega$ allows to give an exact
solution of $-\Delta z_1 = \delta_{x_1}$ by
\[
  z_1 =
  \begin{cases}
    \frac{1}{2\pi}\ln\frac1{\abs{x-x_1}}-\frac{\ln2}{2\pi} & \text{for } d=2,\\
    \frac1{4\pi}\frac{1}{\abs{x-x_1}}-\frac1{2\pi} & \text{for } d=3,
  \end{cases}
\]
where $\abs{x}$ denotes the Euclidean norm of $x\in \R^d$. We choose $\bar u(x) =
\cos(\pi\abs{x-x_1})$ and $\xi_1=\bar u(x_1)-1$. Hence, the adjoint solution $\bar z$ is then given
as $\bar z=z_1$. By choosing $\alpha=1$, the optimal control fulfills $\bar q=P_{[a,b]}(-\bar z)$.
The right-hand side $f$ is chosen such that $\bar u$ solves the state equation. Thus, we have $f =
-\Delta \cos(\pi\abs{x}) - q$. 

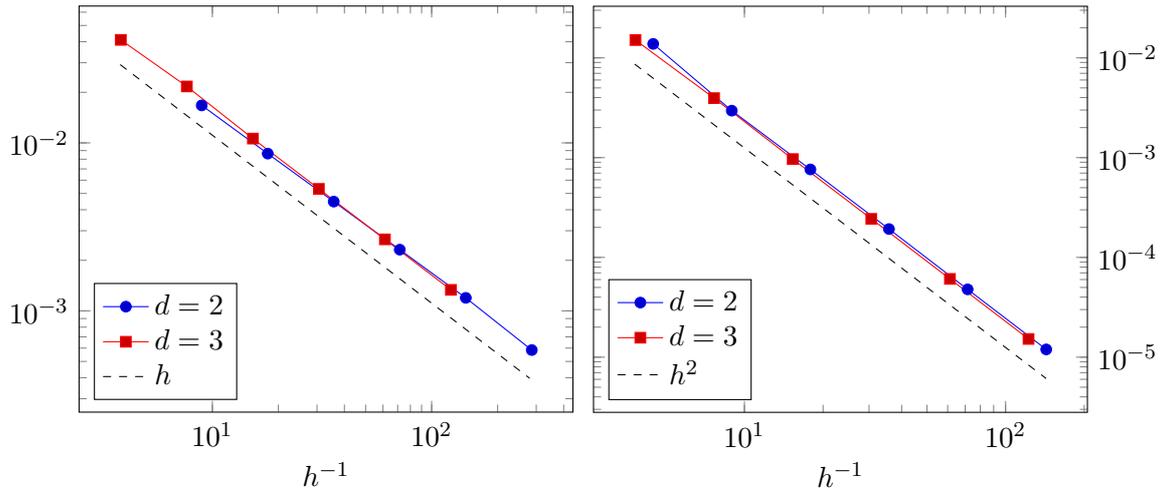
\begin{figure}
  \centering
  \begin{tikzpicture}
    \begin{loglogaxis}
      [
      xlabel=$h^{-1}$,
      legend pos=south west,
      width=0.5\textwidth,
      legend cell align={left},
      xtick={10, 100, 1000, 10000, 100000, 1000000, 10000000, 100000000, 1000000000}
      ]
      \addplot table [y = error, x = dofs]{qerror-c-2.txt};
      \addlegendentry{$d=2$};
      \addplot table [y = error, x = dofs]{qerror-c-3.txt};
      \addlegendentry{$d=3$};
      \addplot [dashed, domain=3.8:280, samples=10,]{1/(9*x)};
      \addlegendentry{$h$};
    \end{loglogaxis}
  \end{tikzpicture}~
  \begin{tikzpicture}
    \begin{loglogaxis}
      [
      xlabel=$h^{-1}$,
      legend pos=south west,
      width=0.5\textwidth,
      ylabel near ticks, yticklabel pos=right,
      legend cell align={left},
      xtick={10, 100, 1000, 10000, 100000, 1000000, 10000000, 100000000, 1000000000}
      ]
      \addplot table [y = error, x = dofs]{qerror-p-2.txt};
      \addlegendentry{$d=2$};
      \addplot table [y = error, x = dofs]{qerror-p-3.txt};
      \addlegendentry{$d=3$};
      \addplot [dashed, domain=3.8:143, samples=10,]{1/(8*x*x)};
      \addlegendentry{$h^2$};
    \end{loglogaxis}
  \end{tikzpicture}
  \caption{Errors $\norm{q-q_h}_{L^2(\Om)}$ for cellwise constant control discretization (left) and
    $\norm{q-\hat q_h}_{L^2(\Om)}$ for cellwise constant control discretization with post processing
  (right)}\label{fig:error}
\end{figure}

First, we present results for cellwise constant discretization of the control. Here, the bounds are
chosen as $-a=b=1$. The numerical results depicted in Figure~\ref{fig:error} (left) confirm the
estimates of Theorem~\ref{theorem:result2}.

Further, for choosing a different value for the bounds, $-a=b=0.2$, we present in
Figure~\ref{fig:error} (right) results for cellwise constant discretization of the control with post
processing. They confirm the estimate of Theorem~\ref{theorem:result5}, which was proved in $d=2$
dimensions only. However, the numerical results for $d=3$ indicate that a similar convergence
result may
also hold in three dimensions.

%%%%%%%%%%%%%%%%%%%%%%%%%%%%%%%%%%%%%%%%%%%%%%%%%%%%%%%%%%%%%%%%%%%%%%%%%%%%%%%
\bibliography{lit}
\bibliographystyle{abbrv}
%%%%%%%%%%%%%%%%%%%%%%%%%%%%%%%%%%%%%%%%%%%%%%%%%%%%%%%%%%%%%%%%%%%%%%%%%%%%%%%

\end{document}